\documentclass[12pt]{amsart}

\usepackage{amssymb,amscd,amsthm, verbatim,amsmath,color,fancyhdr, mathrsfs}
\usepackage{graphicx}
\usepackage{turnstile}

\usepackage[letterpaper, left=2.5cm, right=2.5cm, top=2.5cm,
bottom=2.5cm,dvips]{geometry}

\setcounter{section}{1}
\usepackage{amsthm, amssymb, amsmath,amsfonts, amsthm, mathtools, enumitem, stackrel}
\usepackage{color}
\usepackage{tikz, tikz-cd}
\usepackage{comment}
\usepackage{adjustbox}
\usepackage{placeins}
\usepackage[foot]{amsaddr}
\usepackage{hyperref}
\hypersetup{citebordercolor=blue}
%Letter styles
\newcommand{\CA}{\mathcal{A}}
\newcommand{\CB}{\mathcal{B}}

\newcommand{\FF}{\mathbb{F}}

%math text modifiers
\newcommand{\Ext}{\mathrm{Ext}}
\newcommand{\Hom}{\mathrm{Hom}}
\newcommand{\im}{\mathrm{im}}
\newcommand{\Mod}{\mathrm{-Mod}}
\newcommand{\Stab}{\mathrm{Stab}}

%Punctuation/decorations
\newcommand{\emd}{\text{---}}
\newcommand{\mm}{/\!/}
\newcommand{\wh}{\widehat}
\newcommand{\wt}{\widetilde}

%Other notational shortcuts

%Theorem types
\newtheorem{thm}{Theorem}[section]
\newtheorem{cor}[thm]{Corollary}

\newtheorem{lemma}[thm]{Lemma}
\newtheorem{prop}[thm]{Proposition}
\newtheorem*{classthm}{Theorem~\ref{thmclassification}}
\newtheorem*{d1prop}{Theorem~\ref{thmd1}}
\newtheorem*{liftcor}{Corollary~\ref{corliftingcrit}}
\newtheorem*{e1}{Theorem~\ref{thmE1}}

%Non italicized theorem types
\theoremstyle{definition}
\newtheorem{defn}[thm]{Definition}
\newtheorem{rmk}[thm]{Remark}
\newtheorem{exmp}[thm]{Example}

\newtheorem{notation}[thm]{Notation}
\newtheorem{note}[thm]{Note}
\newtheorem{conj}[thm]{Conjecture}

%tikz settings
\tikzset{
    turn/.style={anchor=south, rotate=270, inner sep=.5mm}
}

%A(1) drawings

%%%macros for drawing pictures
\def\sqtwoL (#1,#2,#3){
  \draw[#3, thick] (#1,#2) .. controls (#1-1,#2+1) .. (#1,#2+2);
}

\def\sqtwoR (#1,#2,#3){
  \draw[#3, thick] (#1,#2) .. controls (#1+1,#2+1) .. (#1,#2+2);
}

\def \sqtwoCR (#1,#2,#3){
   \draw[#3, thick] (#1,#2) .. controls (#1+1,#2+.5) and (#1+1.5,#2+2) .. (#1+2,#2+2);
}

\def \sqtwoCL (#1,#2,#3){
   \draw[#3, thick] (#1,#2) .. controls (#1-1,#2+.5) and (#1-1.5,#2+2)  .. (#1-2,#2+2);
}

\def \sqone (#1,#2,#3){
  \draw[#3, thick] (#1,#2) -- (#1,#2+1);
}

\def \sqfourL (#1,#2,#3){
\draw[#3, thick] (#1,#2) -- (#1-.5, #2)--(#1-.5, #2+4)--(#1, #2+4);
}

%%%A(1) starting at specificed coordinats
\def\Aone (#1,#2){
\fill (#1, #2) circle (3pt);
\fill (#1, #2+1) circle (3pt);
\fill (#1, #2+2) circle (3pt);
\fill (#1, #2+3) circle (3pt);
\fill (#1+2, #2+3) circle (3pt);
\fill (#1+2, #2+4) circle (3pt);
\fill (#1+2, #2+5) circle (3pt);
\fill (#1+2, #2+6) circle (3pt);
%%draw the sq1s
\draw (#1, #2) -- (#1, #2+1);
\draw (#1, #2+2) -- (#1, #2+3);
\draw (#1+2, #2+3) -- (#1 + 2, #2+4);
\draw (#1+2, #2+5) -- (#1+2, #2+6);
%%%draw the sq2s
\draw (#1, #2) .. controls (#1-1, #2+1) .. (#1, #2+2);
\draw (#1+2, #2+4) .. controls (#1+3, #2+5) .. (#1+2, #2+6);
\draw (#1, #2+1) .. controls (#1+1, #2+1.5) and  (#1+1.5 ,#2+3) .. (#1+2,#2+3);
\draw (#1, #2+2) .. controls (#1+1, #2+2.5) and (#1+1.5, #2+4) .. (#1+2, #2+4);
\draw (#1, #2+3) .. controls (#1+1, #2+3.5) and (#1+1.5, #2+5) .. (#1+2, #2+5);
}

\def\Aonecolor (#1,#2, #3){
\fill[#3] (#1, #2) circle (3pt);
\fill[#3]  (#1, #2+1) circle (3pt);
\fill[#3]  (#1, #2+2) circle (3pt);
\fill[#3]  (#1, #2+3) circle (3pt);
\fill[#3]  (#1+2, #2+3) circle (3pt);
\fill[#3]  (#1+2, #2+4) circle (3pt);
\fill[#3]  (#1+2, #2+5) circle (3pt);
\fill[#3]  (#1+2, #2+6) circle (3pt);
%%draw the sq1s
\draw[#3]  (#1, #2) -- (#1, #2+1);
\draw[#3]  (#1, #2+2) -- (#1, #2+3);
\draw[#3]  (#1+2, #2+3) -- (#1 + 2, #2+4);
\draw[#3]  (#1+2, #2+5) -- (#1+2, #2+6);
%%%draw the sq2s
\draw[#3]  (#1, #2) .. controls (#1-1, #2+1) .. (#1, #2+2);
\draw[#3]  (#1+2, #2+4) .. controls (#1+3, #2+5) .. (#1+2, #2+6);
\draw[#3]  (#1, #2+1) .. controls (#1+1, #2+1.5) and  (#1+1.5 ,#2+3) .. (#1+2,#2+3);
\draw[#3]  (#1, #2+2) .. controls (#1+1, #2+2.5) and (#1+1.5, #2+4) .. (#1+2, #2+4);
\draw[#3]  (#1, #2+3) .. controls (#1+1, #2+3.5) and (#1+1.5, #2+5) .. (#1+2, #2+5);
}

\def\rectangle (#1,#2,#3){   \draw[#3] (#1-0.15,#2-0.15) rectangle (#1+0.15,#2+0.15)}

\def\SgL (#1,#2){
\fill (#1, #2) circle (3pt);
\fill (#1, #2+2) circle (3pt);
\fill (#1, #2+3) circle (3pt);
\fill (#1, #2+5) circle (3pt);
\draw[thick] (#1, #2+2)--(#1, #2+3);
\draw[thick] (#1, #2) .. controls (#1-1, #2+1) .. (#1, #2+2);
\draw[thick] (#1, #2+3) .. controls (#1-1, #2+4) .. (#1, #2+5);
}

\def\SgR (#1,#2){
\fill (#1, #2) circle (3pt);
\fill (#1, #2+2) circle (3pt);
\fill (#1, #2+3) circle (3pt);
\fill (#1, #2+5) circle (3pt);
\draw[thick] (#1, #2+2)--(#1, #2+3);
\draw[thick] (#1, #2) .. controls (#1+1, #2+1) .. (#1, #2+2);
\draw[thick] (#1, #2+3) .. controls (#1+1, #2+4) .. (#1, #2+5);
}

\title{Classifying and Extending $Q_0$-Local $\CA(1)$-Modules}

\author{Katharine Adamyk}
\thanks{Portions of this work were supported by the National Science Foundation under grant No. DMS--1906227.}
\address{Western University, Department of Mathematics, Middlesex College,
London, Ontario, Canada  N6A 5B7}
\email{kadamyk@uwo.ca}

\begin{document}
\maketitle

\vskip.5in

\section*{Abstract}
 In the stable category of bounded below $\CA(1)$--modules, every module is determined by an extension between a module with trivial $Q_0$-Margolis homology and a module with trivial $Q_1$-Margolis homology \cite{Bruner}.  We show that all bounded below $\CA(1)$-modules of finite type whose $Q_1$-Margolis homology is trivial are stably equivalent to direct sums of suspensions of a distinguished family of $\CA(1)$-modules.  Each module in this family is comprised of copies of $\CA(1)\mm\CA(0)$ linked by the action of $Sq^1 \in \CA(1)$.  

The classification theorem is then used to simplify computations of $h_0^{-1}\Ext_{\CA(1)}^{\bullet, \bullet}\big(\emd, \FF_2\big)$ and to provide necessary conditions for lifting $\CA(1)$-modules to $\CA$-modules.  We discuss a Davis--Mahowald spectral sequence converging to $h_0^{-1}\Ext_{\CA(1)}^{\bullet, \bullet}(M, \FF_2)$ where $M$ is any bounded below $\CA(1)$-module.  The differentials in this spectral sequence detect obstructions to lifting the $\CA(1)$-module, $M$, to an $\CA$-module.  We give a formula for the second differential.

\section{Introduction}
\label{intro}

Margolis homology is an invariant of modules over a subalgebra of the Steenrod algebra, the collection of which includes, for example, rings that arise as the total cohomology of a space.  
A classical example of the utility of this invariant is that a module over the Steenrod algebra is free if and only if it has trivial Margolis homology with respect to a particular family of elements \cite[Theorem 19.6]{Margolis}.  An overview of more recent uses is given in \cite{BBt}, including computing the algebra of operations for truncations of the Brown-Peterson spectrum in \cite{AdamsSH} and \cite{Culver}.  In this paper, we focus on Margolis homology for modules over the subalgebra $\CA(1)$, specifically on its uses for classifying $\CA(1)$-modules and determining whether they extend to modules over the entire Steenrod algebra. 

Let $\CA$ be the mod-2 Steenrod algebra, and let $x \in \CA$ satisfy $x^2=0$.   If $M$ is a module over a subalgebra of $\CA$ that contains $x$, the Margolis homology of $M$ with respect to $x$ is the quotient of the kernel of the action of $x$ on $M$ by the image of the same action. 
Over the subalgebra $\CA(1)$, which is generated by $Sq^1$ and $Sq^2$, the relevant types of homology are $Q_0$- and $Q_1$-Margolis homology where $Q_0:=Sq^1$ and $Q_1:=Sq^1 Sq^2 + Sq^2 Sq^1$.  For the most part, we will restrict our attention to $\CA(1)$-modules with trivial $Q_1$-homology. 

In general, restricting to modules with a particular type of Margolis homology can give a subcollection of modules that are more manageable to classify.
For example, Adams and Priddy classified all finitely generated invertible $\CA(1)$-modules by showing they are precisely the finitely generated $\CA(1)$-modules whose $Q_0$- and $Q_1$-homology are both 1-dimensional \cite{AP}. In ongoing work, Fabian Hebestreit and Stephan Stolz have classified bounded $\CA(1)$-modules whose $Q_0$-homology is trivial  and whose $Q_1$-homology is two dimensional. 
This last example concerns modules that are $Q_1$-local---that is, modules with trivial $Q_0$-homology.  The modules we focus on in this paper are, in a sense, of an opposite type; they are $Q_0$-local, meaning they have trivial $Q_1$-homology.
Each $\CA(1)$-module satisfying appropriate finiteness conditions is determined by an extension between a $Q_0$-local module and a $Q_1$-local module in the stable category of $\CA(1)$-modules \cite{Bruner}.  Describing $Q_i$-local modules thus provides some insight into $\CA(1)$-modules in general. 

Here, we provide a classification of $Q_0$-local modules that are bounded below and of finite type.  This does not require any restrictions on the $Q_1$-homology.  To this end, we define a particular family of $Q_0$-local $\CA(1)$-modules.
The first module in this family is the \emph{1-seagull},
\[ \CA(1)\mm\CA(0):= \CA(1) \otimes_{\CA(0)} \FF_2. \] An $n$-seagull, $\Upsilon_{n}$, is a chain of $n$ linked copies of suspensions of $\Upsilon_1$, with $0 < n \leq \infty$.  Section~\ref{secseagulls} contains more detail in Definition~\ref{defnseagulls} and Figure~\ref{figsea}.
We refer to a direct sum of suspensions (i.e., shifts) of seagulls as a flock of seagulls.  Any flock of seagulls is an example of a $Q_0$-local module and we show in the classification theorem they are the only examples, up to free summands.

One use for the classification theorem is to simplify calculations in the Adams spectral sequence.  (The classification theorems of \cite{AP} and Hebestreit--Stolz are both motivated by such calculations.)  Consider the spectrum $ko$, the connective cover of real topological $K$-theory.  This spectrum has cohomology 
\[H^*(ko)\cong \CA \mm \CA(1):= \CA \otimes_{\CA(1)} \FF_2,\] \cite[Part III, Theorem 16.6]{AdamsSH}.   The Adams spectral sequence converging to the 2-completed homotopy groups of $ko$ therefore has the $E_2$-page,
\[ \Ext_{\CA}^{\bullet, \bullet}(\CA \mm \CA(1), \FF_2) \cong \Ext_{\CA(1)}^{\bullet, \bullet}(\FF_2, \FF_2). \]
For any spectrum of the form $ko \wedge X$, we can likewise compute the $E_2$-page of the associated Adams spectral sequence via an equivalent computation over $\CA(1)$.  When $M$ is a module over the entire Steenrod algebra, $\CA$, the term $h_0^{-1}\Ext_{\CA(1)}^{\bullet, \bullet}\big( M, \FF_2 \big)$ can be computed from the Margolis homology of $M$, as a consequence of a theorem of Davis \cite{Davis}.  However, it is not always possible to define an $\CA$-module structure on an arbitrary $\CA(1)$-module in a way that is compatible with the existing action of $\CA(1)$. The first application we give of the classification theorem is a generalization of Davis' theorem for arbitrary $\CA(1)$-modules, utilizing a Davis--Mahowald spectral sequence. 
  
The 1-seagull is  an example of an $\CA(1)$-module that has no compatible $\CA$-module structure as there is no action of $Sq^4$ that is compatible with the relation $Sq^1 Sq^4 + Sq^4 Sq^1=Sq^2 Sq^1 Sq^2$.    One can show similarly that none of the finite seagulls lift to $\CA$-modules.  We give a different proof of this in Section~\ref{seclifting} along with some more general results for lifting $\CA(1)$-modules that are not necessarily $Q_0$-local.

\subsection{Summary of Results}

{The main result of this paper is the classification theorem:}
\begin{classthm}
If $M$ is a bounded below, $Q_{0}$-local $\CA(1)$-module of finite type, then $M$ is stably equivalent to a flock of seagulls.
\end{classthm}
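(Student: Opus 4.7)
The plan is to induct on the bottom degree of $M$, peeling off one suspended seagull summand at a time in the stable category of $\CA(1)$-modules (where free $\CA(1)$-summands may be freely introduced or discarded).

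Let $n$ denote the lowest degree in which $M$ is nonzero and pick a nonzero $x \in M_{n}$. Because $M$ is bounded below at $n$ and $Q_{1}$ raises internal degree by $3$, no element of degree $n$ can lie in $\im\,Q_{1}$; combined with $H(M;Q_{1})=0$, this forces $Q_{1}x = Sq^{1}Sq^{2}x + Sq^{2}Sq^{1}x \neq 0$. Starting from $x$, I would build a chain $x = x_{0}, x_{1}, \ldots$ modeling the upward structure of a seagull: at each stage $Q_{1}x_{i} \neq 0$, and since $Q_{1}$-homology vanishes I can write this $Q_{1}$-cycle as $Q_{1}$ of some $x_{i+1}$ (perhaps after correcting $x_{i+1}$ by a $Q_{1}$-cycle), and $x_{i+1}$ serves as the generator of the next linked copy of $\Upsilon_{1}$. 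The chain terminates at length $k$ exactly when the obstruction to extending it vanishes; otherwise it runs forever and we take $k = \infty$. This data assembles into a map $f \colon \Sigma^{n}\Upsilon_{k} \to M$ sending the successive generators of the constituent $1$-seagulls to the $x_{i}$.

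Next, I would show that $f$ admits a stable retraction, yielding $M \simeq \Sigma^{n}\Upsilon_{k} \oplus M'$ in the stable category. The point is that the image of $f$ captures precisely the local $Q_{1}$-triviality forced by $x$, so the complement $M'$ inherits being bounded below, $Q_{0}$-local, and of finite type, but with strictly higher bottom degree. The inductive hypothesis then expresses $M'$ as a flock of seagulls, completing the decomposition of $M$.

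The main obstacle will be the splitting. Constructing a stable retraction of $f$ amounts to controlling $\Ext^{1}_{\CA(1)}$ from $M'$ back to $\Sigma^{n}\Upsilon_{k}$. I would hope to reduce this to the statement that any extension of a bounded-below $Q_{0}$-local module by $\Sigma^{n}\Upsilon_{k}$ is, modulo free $\CA(1)$-summands, again a flock of seagulls---either via an explicit $\Ext$-computation using that $\Upsilon_{k}$ is built from iterated shifts of $\CA(1)\mm\CA(0)$, or by first establishing a Krull--Schmidt-type property for bounded-below $Q_{0}$-local modules in the stable category and then identifying the stable indecomposables as precisely the finite and infinite seagulls.
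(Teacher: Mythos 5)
Your proposal takes a genuinely different route from the paper: you want to peel off a single suspended seagull at a time, inducting on the bottom degree, whereas the paper instead inducts along the filtration $M^0 \subset M^1 \subset \cdots$ by the submodule $M^k$ generated in degrees $\leq k$, showing at each stage that $M^k$ is a flock and carefully tracking how the decomposition changes from $M^{k-1}$ to $M^k$ (the bookkeeping by ``available,'' ``lengthened,'' and ``unavailable'' seagulls). The paper never needs to show that a fixed seagull splits off of all of $M$; it only needs to solve extension problems one degree at a time within a finite module. That avoids the splitting problem you correctly identify as your main obstacle. However, your proposal has several concrete gaps.

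First, the chain construction itself is described with the wrong mechanism. The linking in $\Upsilon_n$ is via $Sq^1$: one needs $Sq^1 y_{4(j+1)} = Sq^2 Sq^1 Sq^2 y_{4j}$, and whether the chain can be extended is a statement about $\im(Sq^1)$, hence about $Q_0$-homology of $M$ (which is \emph{not} assumed trivial), not about $Q_1$-homology. Saying ``$Q_1 x_i \neq 0$, write it as $Q_1 x_{i+1}$, and $x_{i+1}$ is the next generator'' does not produce the degree-$4$ shift, does not give $Sq^1 x_{i+1} = Sq^2 Sq^1 Sq^2 x_i$, and does not explain when the chain terminates. You should instead argue: starting from $x = y_0$ in bottom degree with $Sq^1 y_0 = 0$ (which needs the paper's Lemma~\ref{lemsq1=0} after reducing $M$), the class $Sq^2 Sq^1 Sq^2 y_{4j}$ is always a $Q_0$-cycle, and the chain extends exactly when this cycle is a boundary. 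Even then the length of the resulting chain can depend on the choices of $y_{4(j+1)}$ made along the way, so some normalization is required.

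Second, and most seriously, the splitting step is not addressed but merely deferred, and both fallbacks you suggest are circular. Computing $\Ext^1_{\CA(1)}(M', \Sigma^n\Upsilon_k)$ and showing every extension is a flock requires already knowing the structure of bounded-below $Q_0$-local modules; establishing a Krull--Schmidt property ``and then identifying the stable indecomposables as precisely the finite and infinite seagulls'' is literally the classification theorem. There is no obvious relative-injectivity or vanishing-$\Ext$ argument that makes a single $\Sigma^n\Upsilon_k$ split off, which is exactly why the paper avoids this formulation in favor of its degree-by-degree argument, where at each stage the extension problem is solved by explicitly modifying a basis of $(M^k)_k$ (the content of Lemmas~\ref{lemchoose_b} and~\ref{lemb_available}).

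Third, the induction is incorrectly set up: peeling off $\Sigma^n\Upsilon_k$ need not raise the bottom degree, since $M_n$ may have $\FF_2$-dimension greater than one. You would need a double induction (on bottom degree and on $\dim M_n$). And finally, even if the finite case were established this way, a module that is bounded below but not bounded above cannot be handled by this induction alone, since the peeling process never terminates. The paper handles this by first splitting $M$ into a sum of finite pieces (already classified) and a summand with no finite summands, then identifying the latter as a sum of $\Upsilon_\infty$'s via a colimit argument; some such separate device is unavoidable.
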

By stably equivalent, we mean isomorphic in the stable category of $\CA(1)$-modules (see Section~\ref{secstablecat}). Equivalently, every $\CA(1)$-module meeting the conditions of Theorem 3.7 is isomorphic to the direct sum of a flock of seagulls and a free module. 

The first application we give of this theorem is to computing localized $\Ext$ terms.  
Let $M$ be a bounded below $\CA(1)$-module of finite type and let $M_*$ denote the dual of $M$ as an $\FF_2$-vector space, with the right $\CA(1)$-module structure given by precomposition by the (left) action of $\CA(1)$ on $M$.  
There exists a spectral sequence, due to Davis and Mahowald \cite{DM},
\[ E_1^{\sigma, s,t} = \Ext_{\CA(0)}^{s,t}(N_\sigma \otimes M, \FF_2) \Rightarrow \Ext_{\CA(1)}^{s,t}(M, \FF_2)\]
where $N_\bullet = \FF_2[x_2, x_3]$.
We consider this spectral sequence when  $h_0 \in \Ext_{\CA(1)}^{1,1}(\FF_2, \FF_2)$ is inverted. 
\begin{e1}
The $E_1$-page of the $h_0$-localized Davis--Mahowald spectral sequence for an $\CA(1)$-module, $M$, is isomorphic, as a trigraded $\FF_2$-vector space, to 
\[H_\bullet(M_*;Q_0)\otimes \FF_2[h_0^{\pm 1}, x_3^2].\]
\end{e1}
We then compute all differentials in the $h_0$-localized Davis--Mahowald spectral sequence associated to a seagull module, $\Upsilon_n$ (Proposition~\ref{propSpSqSeagull}). For an arbitrary bounded below $\CA(1)$-module, $M$, $L_0 M:= \Upsilon_\infty \otimes M$ is $Q_0$-local \cite{Bruner} and thus stably equivalent to a flock of seagulls.  So, in theory, all differentials can be computed in a spectral sequence converging to \[ h_0^{-1}\Ext_{\CA(1)}^{\bullet,\bullet}(L_0 M, \FF_2) \cong h_0^{-1}\Ext_{\CA(1)}^{\bullet, \bullet}(M, \FF_2).\]  However, this relies on computing the decomposition of $\Upsilon_\infty \otimes M$ into seagull modules, which can be labor intensive.  For the first nonzero differential, $d_2$, we are able to give a formula that does not depend on computing this decomposition. 
\begin{d1prop}
The differential 
\[d_2: \FF_2[h_0^{\pm 1}, x_3^2] \otimes H_{\bullet}(M_* ; Q_0)  \to \FF_2[h_0^{\pm 1}, x_3^2] \otimes H_{\bullet}(M_*;Q_0)    \]
is given by
\[ d_2\left(h_0^s x_3^\sigma [b_*]  \right) = h_0^{s-1}x_3^{\sigma+2} \left[ b_* Sq^2 Sq^1 Sq^2  \right]  \]
for all even $\sigma$.
\end{d1prop}  

If $M$ is an $\CA$-module, then the spectral sequence collapses at the $E_1$-page.  So, we are able to use differentials in the spectral sequence to detect obstructions to lifting $\CA(1)$-modules to $\CA$-modules.  The differentials in the spectral sequences associated to the seagull modules lead to the following corollary. 
\begin{liftcor}
Let $M$ be a bounded below $\CA(1)$-module of finite type.
\begin{enumerate}[label=(\roman*)]
    \item If $L_0 M$ is stably equivalent to a flock of seagulls that includes a finite seagull, then $M$ does not lift to an $\CA$-module.
    \item If $M$ is $Q_0$-local, then $M$ lifts to an $\CA$-module if and only if $M$ is stably equivalent to a flock of infinite seagulls (or zero). 
\end{enumerate}\end{liftcor}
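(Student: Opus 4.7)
The plan is to combine the classification theorem, the differential formula of Proposition~\ref{propSpSqSeagull} for the seagull modules, and the observation that the Davis--Mahowald spectral sequence collapses at $E_1$ whenever its input lifts to an $\CA$-module.

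For part (i), I would first note that since $L_0 M = \Upsilon_\infty \otimes M$ and $\Upsilon_\infty$ has one-dimensional $Q_0$-Margolis homology, a K\"unneth argument gives $H_\bullet(M_*; Q_0) \cong H_\bullet((L_0 M)_*; Q_0)$. By Theorem~\ref{thmE1}, the Davis--Mahowald $E_1$-pages for $M$ and for $L_0 M$ are therefore isomorphic as trigraded $\FF_2$-vector spaces, and both spectral sequences abut to the common group $h_0^{-1}\Ext_{\CA(1)}^{\bullet,\bullet}(M,\FF_2) \cong h_0^{-1}\Ext_{\CA(1)}^{\bullet,\bullet}(L_0 M,\FF_2)$. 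If $L_0 M$ contains a finite seagull summand, Proposition~\ref{propSpSqSeagull} (together with the compatibility of the spectral sequence with direct sums) produces a nonzero differential, so the abutment is strictly smaller, as an $\FF_2$-vector space, than the $E_1$-page. Hence the spectral sequence for $M$ also fails to collapse at $E_1$, and $M$ cannot be the restriction of an $\CA$-module.

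For part (ii), the $Q_0$-locality of $M$ identifies $M$ stably with $L_0 M$, which by the classification theorem is stably equivalent to a flock of seagulls. The only-if direction is then immediate from (i): any finite seagull in the decomposition would obstruct lifting. For the if direction, it suffices to exhibit a compatible $\CA$-action on $\Upsilon_\infty$, since direct sums of suspensions of lifted modules are automatically lifted. I would realize $\Upsilon_\infty$ as the underlying $\CA(1)$-module of a known $\CA$-module---for example, as a submodule or subquotient of $\CA\mm\CA(0)$, or of the cohomology of a familiar spectrum whose $\CA(1)$-module structure has been recorded---and verify that the extended action restricts compatibly to the existing $\CA(1)$-action.

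The main obstacle is the existence half of part (ii). Part (i) is essentially a formal consequence of the collapse phenomenon together with the differential formula, whereas lifting $\Upsilon_\infty$ requires producing an explicit action of the higher $Sq^i$ satisfying all Adem relations simultaneously. The finite seagulls fail precisely because the relation $Sq^1 Sq^4 + Sq^4 Sq^1 = Sq^2 Sq^1 Sq^2$ cannot be satisfied at the boundary, and only the infinite chain has enough room to absorb this obstruction inductively. Producing or invoking this infinite lift is therefore the substantive step, to be carried out in Section~\ref{seclifting}.
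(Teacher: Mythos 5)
Your argument for part (i) is essentially the one in the paper: identify the abutments via Lemma~\ref{lemEinf}, observe that $L_0 M\to M$ induces an isomorphism on $Q_0$-Margolis homology (the paper reads this off the long exact sequence of the triangle $L_0M\to M\to L_1M$ using that $L_1M$ has trivial $Q_0$-homology, while you use a K\"unneth argument on $\Upsilon_\infty\otimes M$ — both are fine and give isomorphic $E_1$-pages by Theorem~\ref{thmE1}), and then conclude from Proposition~\ref{propSpSqSeagull} together with compatibility of the Davis--Mahowald spectral sequence with direct sums. The ``only if'' half of part (ii) is likewise as intended: if $M$ is $Q_0$-local then $L_0M\simeq M$, so any finite seagull in the classification forces a differential and blocks a lift by part (i).

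You are right to single out the ``if'' direction of (ii) as the genuinely substantive step, and it is worth noting that the paper is itself terse here: it cites only Theorem~\ref{thmclassification} and Corollary~\ref{cornolift}, which supply the obstruction side but not a positive lift of $\Upsilon_\infty$. Your proposed route via $\CA\mm\CA(0)$ needs care, though. As an $\CA(1)$-module, $\CA\mm\CA(0)$ is $Q_0$-local with one-dimensional $Q_0$-homology, hence stably equivalent to $\Upsilon_\infty$; but the equivalence is up to a nonzero free summand, and a summand (or an $\CA(1)$-sub/quotient) of an $\CA$-module is not automatically an $\CA$-module, so one cannot simply peel $\Upsilon_\infty$ off. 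The cleanest completion is the direct inductive construction you allude to: declare $Sq^4 y_{4j}=y_{4(j+1)}$ and check the Adem relations, noting that the relation $Sq^1 Sq^4+Sq^4 Sq^1=Sq^2Sq^1Sq^2$ applied to $y_{4j}$ is precisely what a finite seagull cannot satisfy at its top cell but which the infinite chain always can, with the higher $Sq^{2^k}$ handled similarly. One must also record that free $\CA(1)$-modules lift (for instance $\CA(1)$ underlies the $A_1$-spectra of Davis--Mahowald) and that lifts pass to suspensions and arbitrary direct sums, so that ``stably equivalent to a flock of infinite seagulls or zero'' genuinely implies a lift of $M$ itself.
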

Use of this corollary requires determining a decomposition of $L_0 M = \Upsilon_\infty \otimes M$ into seagull modules.   However, if the differentials in the spectral sequence associated to $M$ can be determined directly, they can detect obstructions to lifting without computing this decomposition.  (See, for example Corollary~\ref{cord2lift}.)

\subsection{Organization} Section~\ref{secstablecat} gives a more thorough exposition of relevant background material, including the category of stable $\CA(1)$-modules.  Section~\ref{secseagulls} defines the seagull modules and Section~\ref{secids} proves some helpful identities for working with $Q_0$-local modules.   The proof of the classification theorem begins in Section~\ref{secfinitesg} with the proof that all finite, $Q_0$-local $\CA(1)$-modules are stably equivalent to sums of suspensions of seagulls.  The techniques in this part of the proof do not generalize well to the case of bounded below modules of finite type that are not bounded above. Instead, we show in Section~\ref{secinfinitesg} that any bounded below, $Q_0$-local $\CA(1)$-module of finite type splits as a sum of finite summands, which have been classified, and a summand with no finite summands, which we then show is isomorphic to a sum of suspensions of $\Upsilon_\infty$.

Section~\ref{secapp}, covers the computational applications of the classification theorem.  The main application, discussed in Section~\ref{sech0}, is to computations in a localized Davis--Mahowald spectral sequence.  In Section~\ref{seclifting}, we describe the consequences of this spectral sequence for lifting $\CA(1)$-modules to $\CA$-modules. 

\subsection*{Acknowledgements}
I would like to thank Prasit Bhattacharya, Fabian Hebestreit, Peter May, Haynes Miller, John Palmieri, and Nicolas Ricka for helpful conversations and suggestions. Special thanks to Michael Hopkins, who suggested generalizing Davis' theorem to $\CA(1)$-modules with no compatible $\CA$-module structure and to Bob Bruner and John Rognes who shared their forthcoming work on the Davis--Mahowald spectral sequence and gave helpful suggestions.  Finally, I am especially grateful to Agn\`es Beaudry, who supervised this work as my Ph.D. thesis.

\section{The Classification Theorem}
\label{chapclassification}

In this section, we prove a theorem classifying all bounded below, $Q_0$-local modules of finite type in the stable category of $\CA(1)$-modules.  We begin with some background on the stable category in Section~\ref{secstablecat}.  In Section~\ref{secseagulls}, we define a family of $Q_0$-local modules, named the seagull modules.  After proving some identities in Section~\ref{secids}, we show in Section~\ref{secfinitesg} that every finite $Q_0$-local module is a direct sum of suspensions of seagull modules.  For $Q_0$-local modules that are merely bounded below and finite type, we resolve a few remaining technicalities in Section~\ref{secinfinitesg}. 

\subsection{\texorpdfstring{The Stable Category of 
$\CA(1)$-Modules}{The Stable Category of 
A(1) Modules}}
\label{secstablecat} Unless otherwise specified, all modules are left modules.
In general, we will consider only $\CA(1)$-modules that satisfy certain finiteness conditions, up to a free summand. 
\begin{notation}
For any graded module, $M$, we write $M_k$ for the subgroup of $M$ made up of the homogeneous elements of $M$ of degree $k$.
\end{notation}
We will often restrict our attention to $\CA(1)$-modules satisfying one or both of the following finiteness conditions.  \begin{defn}
An $\CA(1)$-module is bounded below if there exists some $n$ such that $M_k=0$ for all $k \leq n$.  An $\CA(1)$-module is of finite type if $M_k$ is a finite $\FF_2$-vector space for all $k$. 
\end{defn}

The benefit of considering appropriately finite $\CA(1)$-modules, up to a free summand, is that it allows us to work over the stable category of (bounded below) $\CA(1)$-modules, which has nice properties with respect to Margolis homology. 
\begin{defn}
The stable category of $\CA(1)$-modules, $\Stab\big(\CA(1)\big)$, be the category with all $\CA(1)$-modules as objects and morphisms $[M, N] = \Hom_{\CA(1)}(M, N)/\sim$ where $f\sim g$ if $f-g$ factors through a free $\CA(1)$-module.
\end{defn}
We say two $\CA(1)$-modules are stably equivalent if they are isomorphic in $\Stab\big(\CA(1)\big)$.
Over $\CA(1)$ (or, in fact, any $\CA(n)$), a module is free if and only if it is projective \cite[Proposition 12.2.8]{Margolis}.  So, this is equivalent to the usual definition of a stable module category, where maps are identified if their difference factors through a projective module (see for example, Chapter 2.2 of \cite{Hovey}).

Let $\Stab\big(\CA(1)\big)^b$ be the full subcategory of $\Stab\big( \CA(1)\big)$ whose objects are bounded below $\CA(1)$-modules. 
Every module in $\Stab\big(\CA(1)\big)^b$ splits as a direct sum of a free module and a module with no free summands \cite{Margolis}. 
\begin{defn}
Let $M = M_{red} \oplus M_{free}$ where $M_{free}$ is free and $M_{red}$ has no free summands.  The module $M_{red}$ is called the reduced part of $M$, and $M$ is reduced if $M_{red} \cong M$.
\end{defn}
Two bounded below $\CA(1)$-modules are stably equivalent if and only if their reduced parts are isomorphic in $\CA(1)\Mod$ \cite{Margolis}.

Let $Q_0=Sq^1$ and $Q_1=Sq^2 Sq^1 + Sq^1 Sq^2$.  Since $Q_i^2=0$,  $Q_i$-Margolis homology can be defined as \[H_k(M;Q_i) = \dfrac{\ker(Q_i)_k}{\im(Q_i)_k}\] for any $\CA(1)$-module, $M$.  Any module that is free over $\CA(1)$ has trivial $Q_0$- and $Q_1$-homology, so if $M$ and $N$ are stably equivalent $\CA(1)$-modules, they have the same $Q_i$-homology.  
The stable category of $\CA(1)$-modules is triangulated, and
every $M \in \Stab\big(\CA(1)\big)^b$ sits in a unique triangle,
\[ L_0 M \to M \to L_1 M \to L_0 M [1], \]
where $L_i M$ is $Q_i$-local \cite{Bruner}.
The module $L_i M$ is called the $Q_i$-localization of $M$.  
To understand $\CA(1)$-modules more broadly, it would therefore be helpful to know more about $Q_i$-local modules.  

\subsection{The Seagull Modules}\label{secseagulls}
In this section, we define the seagull modules, which are depicted in Figure~\ref{figsea}.
The first seagull module is
\[ \Upsilon_1 := \CA(1)\mm\CA(0) = \CA(1) \otimes_{\CA(0)}\FF_2. \]
Note that $\CA(1) \otimes_{\CA(0)} \FF_2$ has the $\CA(1)$-module structure given by acting on the left factor. (This is in contrast to the $\CA(1)$-module structure on a tensor product over $\FF_2$ of $\CA(1)$-modules which is given by the diagonal action.)  An $n$-seagull, $\Upsilon_{n}$, is a chain of $n$ linked copies of suspensions of $\Upsilon_1$.

\begin{defn}\label{defnseagulls}  The $n$-seagull, $\Upsilon_{n}$, is generated as an $\CA(1)$-module by $\{y_{4j}\}_{0 \leq j \leq n-1}$ with $|y_{4j}|=4j$ where $Sq^2 Sq^1 Sq^2 y_{4j} \neq 0$ for all $j$ and 
\[ Sq^{1}y_{4j}= \begin{cases} 0 & j=0 \\ Sq^2 Sq^1 Sq^2 y_{4(j-1)} & j>0. \end{cases} \]
\end{defn}

\begin{figure}[ht!]
    \caption{The modules $\Upsilon_n$. The bottom class of each module is in degree zero.}
    \label{figsea}
    \phantom{.} \hfill
  \begin{tikzpicture}[scale=0.3] 
    \SgL(0,0);
    \end{tikzpicture} 
    \hfill 
        \begin{tikzpicture}[scale=0.3] 
    \SgL(0,0);
    \SgR(0,4);
    \sqone(0,4,);
    \end{tikzpicture} 
    \hfill 
        \begin{tikzpicture}[scale=0.3] 
    \SgL(0,0);
     \SgR(0,4);
    \sqone(0,4,);
    \SgL(0,8);
    \sqone(0,8,);
    \end{tikzpicture} 
    \hfill $\cdots$  \hfill 
    \begin{tikzpicture}[scale=0.3] 
    \SgL(0,0);
     \SgR(0,4);
    \sqone(0,4,);
    \SgL(0,8);
    \sqone(0,8,);
    \sqone(0,12,);
    \sqtwoR(0,12,);
    \fill(0,12) circle(3pt);
    \fill(0,14) circle (3pt);
    \draw[thick] (0,14) -- (0,14.5);
    \fill(0,15) circle (2pt);
    \fill(0,15.5) circle (2pt);
    \fill(0,16) circle (2pt);
    \end{tikzpicture} \hfill  \phantom{.} \\
    \phantom{.} \hfill $\Upsilon_1$ \hfill $\Upsilon_2$ \hfill $\Upsilon_3$ \hfill \hfill   $\Upsilon_\infty$ \hfill \phantom{.}
\end{figure}
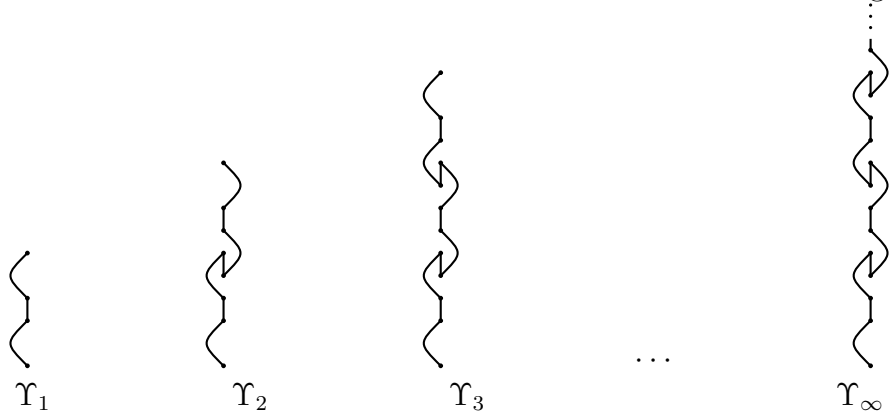

We call a direct sum of suspensions of seagulls of various lengths, 
\[ \bigoplus_{i \in I} \Sigma^{\alpha_i} \Upsilon_{n_i}, \]
a flock of seagulls. 
We sometimes refer to any suspension of an $n$-seagull as an $n$-seagull, but $\Sigma^k \Upsilon_n$ will always mean the $k^{\text{th}}$ suspension of the $n$-seagull whose first generator is in degree zero. 
\begin{notation}
When working in categories of modules over a subalgebra of $\CA$ we always use ``suspension'' and the character $\Sigma$ to mean a shift in the grading of the module.  In particular, we do not use $\Sigma$ for  the translation functor $M \mapsto M[1]$ in the triangulated category $\Stab^b\big(\CA(1)\big)$.
\end{notation}

For all $n$, the Margolis homology of $\Upsilon_{n}$ is given by
\begin{align*}
    H_{\bullet}(\Upsilon_n;Q_0) &= \begin{cases} \FF_2 \oplus \Sigma^{4n+1}\FF_2 & n<\infty \\ \FF_2 & n=\infty \end{cases}\\
    H_{\bullet}(\Upsilon_n;Q_1) &= 0.
\end{align*}
So the $n$-seagulls are $Q_0$-local. Furthermore, taking the tensor product of a bounded below $\CA(1)$-module, $M$, and the infinite seagull, $\Upsilon_\infty$, results in the $Q_0$-localization of $M$ \cite{Bruner}.  Note that $\Upsilon_\infty \otimes M$ is an $\CA(1)$-module via the diagonal action. 

The seagull modules are not only examples of $Q_0$-local modules, they are essentially the only examples. 
\begin{thm}\label{thmclassification} If $M$ is a bounded below,  $Q_{0}$-local $\CA(1)$-module of finite type, then $M$ is stably equivalent to a flock of seagulls. 
\end{thm}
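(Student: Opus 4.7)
The plan is to establish the classification in two stages, mirroring the organization described in Section 2.2: first classify finite (bounded above as well as below), reduced, $Q_0$-local $\CA(1)$-modules as flocks of finite seagulls, and then reduce the general bounded-below, finite-type case to this one. Throughout I may assume $M$ is reduced, since stable equivalence is insensitive to free summands and a bounded-below $\CA(1)$-module splits canonically as a reduced part plus a free part.

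For the finite case, I would argue by strong induction on $\dim_{\FF_2} M$. A nonzero reduced, bounded-below $\CA(1)$-module with $H_\bullet(M; Q_1) = 0$ must have $H_\bullet(M;Q_0) \neq 0$: otherwise both Margolis homologies would vanish and $M$ would be free, contradicting reducedness. I would select a class $x$ of minimal degree representing a nontrivial $Q_0$-homology class, so in particular $Sq^1 x = 0$. Using the identities established in Section 2.4, I would then iteratively produce elements $y_0 = x,\, y_4,\, y_8,\, \dots$ in $M$ fulfilling the defining relations of Definition~\ref{defnseagulls}, continuing as long as a preimage under $Sq^1$ of the previous $Sq^2 Sq^1 Sq^2 y_{4(j-1)}$ can be found in $M$. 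Since $M$ is finite the process terminates after finitely many steps, and the resulting submodule $N$ is isomorphic to $\Sigma^{|x|}\Upsilon_n$ for some $n$. The crucial step is to split $N$ off as a direct summand; I would do this by constructing an $\CA(1)$-linear retraction $M \twoheadrightarrow N$ degree by degree, leveraging the $Q_0$-local hypothesis together with the tight algebraic identities that hold inside a seagull to show that the choices can be made consistently. The complement is then bounded below, reduced, $Q_0$-local, and of strictly smaller dimension, so induction closes.

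For the general bounded-below, finite-type case, I would decompose $M = M_{\text{fin}} \oplus M_{\text{inf}}$, where $M_{\text{fin}}$ is the sum of all finite $\CA(1)$-summands of $M$ and $M_{\text{inf}}$ has no finite summands; this decomposition exists because the finite-type hypothesis supplies a Krull--Schmidt-style grouping degree by degree. The finite case classifies $M_{\text{fin}}$ as a flock of finite seagulls. For $M_{\text{inf}}$, running the same seagull-construction procedure starting from a minimal-degree $Q_0$-homology generator $x$ can never terminate, for a terminated chain would produce a finite summand of $M_{\text{inf}}$, contradicting its definition. Thus the construction yields a submodule isomorphic to $\Sigma^{|x|}\Upsilon_\infty$. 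Splitting this off by the same retraction argument and iterating decomposes $M_{\text{inf}}$ as a flock of infinite seagulls; the finite-type hypothesis guarantees that only finitely many seagulls contribute to each degree, so the exhaustion is well-defined.

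The main obstacle I anticipate is the splitting step in both stages: one must promote the seagull submodule $N$ to a genuine direct summand of $M$, not merely a subobject sitting in a short exact sequence. This is especially delicate in the finite case at the top of $N$, where the class $Sq^2 Sq^1 Sq^2 y_{4(n-1)}$ could in principle coincide with a class that extends the chain further inside $M$, and one must either rule this out or continue the construction before splitting. The $Q_0$-local hypothesis and the identities of Section~\ref{secids} will together do the real work, controlling both when the seagull-building process is allowed to terminate and the vanishing of the obstruction to producing a retraction.
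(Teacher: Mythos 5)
Your high-level organization matches the paper's: handle the finite case first, then reduce bounded-below-finite-type to it. But the route you propose through the finite case is genuinely different from the paper's, and it contains the crucial gap that the paper's strategy is specifically designed to avoid.

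You propose inducting on $\dim_{\FF_2} M$: pick a minimal-degree $Q_0$-homology class $x$, build a seagull submodule $N \cong \Sigma^{|x|}\Upsilon_n$ by iterating the relation $Sq^1 y_{4j} = Sq^2 Sq^1 Sq^2 y_{4(j-1)}$, split $N$ off as a direct summand, and recurse on the complement. The step you flag as ``the main obstacle'' --- constructing an $\CA(1)$-linear retraction $M \twoheadrightarrow N$ --- is indeed the whole difficulty, and the sketch ``leveraging the $Q_0$-local hypothesis together with the tight algebraic identities'' does not supply it. The retraction's behavior is forced on $N$ itself, but nothing in your construction controls how the other generators of $M$ in degrees near $|x|$ map into $N$, and the choices at each stage of the lift $y_{4j}$ are not unique (any element of $\ker(Sq^1)$ in that degree may be added). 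These ambiguities are precisely what can obstruct a consistent retraction. Note also that your choice to terminate the chain at the first $j$ where $Sq^2 Sq^1 Sq^2 y_{4(j-1)}$ has no $Sq^1$-preimage may be premature: a different choice of earlier $y_{4i}$'s (differing by $\ker Sq^1$) could extend the chain further, and a seagull that terminates ``too early'' need not be a summand.

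The paper avoids this problem entirely by never trying to split off a single seagull. Instead it inducts on the truncation $M^k$ (the submodule generated by elements of degree $\leq k$), showing at each stage that $M^k$ is isomorphic to a flock of seagulls. The induction step is an extension problem between flocks: Lemma~\ref{lemquotient} identifies $M^k/M^{k-1}$ as a sum of $1$-seagulls, Lemmas~\ref{lemchoose_b} and~\ref{lemb_available} adjust the choice of new generators in degree $k$ so that $Sq^1$ of each hits exactly the top classes of ``available'' seagulls in $M^{k-1}$, and Lemma~\ref{leminductive} then produces a flock-of-seagulls model for $M(\lambda)$ and verifies it is isomorphic to $M(\lambda)$ via the $5$-lemma. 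This produces a global isomorphism without ever having to exhibit a retraction onto a single summand. The extra bookkeeping (the $L_\lambda / A_\lambda / U_\lambda$ partition of seagulls) is exactly what absorbs the ambiguity in your choice of $y_{4j}$. So to rescue your plan you would have to either prove the retraction exists (essentially as hard as the theorem itself) or reformulate it along the paper's lines, choosing the basis of $(M^k)_k$ adaptively and invoking the $5$-lemma.

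In the infinite case you again need the same splitting device, and your decomposition $M = M_{\mathrm{fin}} \oplus M_{\mathrm{inf}}$ as ``the sum of all finite summands plus the rest'' requires justification; the paper constructs it explicitly (Lemmas~\ref{lempartone}, \ref{lemparttwo}) by peeling off finite summands degree by degree. Finally, the paper handles $M_{\mathrm{inf}}$ not by splitting off one $\Upsilon_\infty$ at a time but by producing a compatible family of flock-of-seagulls isomorphisms $\Phi^k$ for the truncations $M^k$ and taking a colimit; your iterative splitting would face the same unproved retraction step at each stage.
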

The remainder of Section~\ref{chapclassification} concerns the proof of this theorem.

\subsection{\texorpdfstring{Properties of $Q_0$-local $\CA(1)$-modules}{Properties of Q0-local A(1)-modules}}
\label{secids}

This section provides several properties of reduced, bounded below, $Q_0$-local $\CA(1)$-modules that will be needed in proving the classification theorem. In the rest of this section, $M$ will always denote a reduced, bounded below, $Q_0$-local $\CA(1)$-module.   After possibly shifting $M$, we assume $M_k$ is nonzero for $k=0$ and zero for all $k<0$ (such a module will be referred to as connective).

In this section, we will frequently need the following relations that hold in $\CA(1)$:
\begin{align}
    Sq^1 Sq^1 &= 0 \label{eq1} \\
    Sq^2 Sq^2 &= Sq^1 Sq^2 Sq^1 \label{eq2} \\
    Sq^1 Sq^2 Sq^2 &= Sq^2 Sq^2 Sq^1 =0 \label{eq3}\\
    Sq^1 Sq^2 Sq^1 Sq^2 &= Sq^2 Sq^1 Sq^2 Sq^1. \label{eq4}
\end{align}
The relations (\ref*{eq3}) and (\ref*{eq4}) are generated by (\ref*{eq1}) and (\ref*{eq2}) (as are all relations in $\CA(1)$).
We will also repeatedly use the fact that $\ker(Q_1)\cap M_k$ is zero for all $k \leq 2$.  To see this, note that $H_\bullet (M;Q_1)=0$ and that $\im(Q_1)\cap M_k=0$ in degrees $k \leq 2$. 

\begin{lemma}\label{lemidentities}
Let $M$ be a reduced, connective, $Q_0$-local $\CA(1)$-module and let $x$ be any nonzero element of $M_0$. Then, 
\begin{enumerate}[label=\roman*.)]
\item $Sq^{2}Sq^{1}Sq^{2}x \neq 0 $, and
\item $Sq^{1}Sq^{2}Sq^{1}Sq^{2}x = Sq^{2}Sq^{1}Sq^{2}Sq^{1}x = 0$.
\end{enumerate}
\end{lemma}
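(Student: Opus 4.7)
The two parts call for different strategies: part (i) is a direct contradiction argument using $Q_1$-acyclicity in low degrees, while part (ii) rests on the fact that $\CA(1)$ is a graded Frobenius algebra.

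For part (i), I would assume toward contradiction that $Sq^2 Sq^1 Sq^2 x = 0$. By relation (\ref*{eq3}),
\[
Q_1(Sq^2 x) = Sq^1 Sq^2 Sq^2 x + Sq^2 Sq^1 Sq^2 x = Sq^2 Sq^1 Sq^2 x = 0,
\]
so $Sq^2 x$ lies in $\ker(Q_1)\cap M_2 = 0$, forcing $Sq^2 x = 0$. Consequently $Q_1 x = Sq^2 Sq^1 x$, and since $x \neq 0$ and $\ker(Q_1)\cap M_0 = 0$ we must have $Sq^1 x \neq 0$. Setting $y := Sq^1 x \in M_1$ and applying relations (\ref*{eq1}) and (\ref*{eq2}) yields
\[
Q_1 y = Sq^1 Sq^2 Sq^1 x + Sq^2 Sq^1 Sq^1 x = Sq^2 Sq^2 x = 0,
\]
placing $y$ in $\ker(Q_1)\cap M_1 = 0$ and contradicting $y \neq 0$.

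For part (ii), relation (\ref*{eq4}) identifies the two expressions, so it suffices to show one of them vanishes. The element $Sq^1 Sq^2 Sq^1 Sq^2$ spans the top-degree component of $\CA(1)$, which is eight-dimensional with top degree $6$. The plan is then to invoke the standard fact that $\CA(1)$ is a graded Frobenius algebra: the pairing $(a,b) \mapsto (ab)_6$ on $\CA(1)$ is nondegenerate, so for every nonzero $a \in \CA(1)$ of degree $d$ there exists $b$ of degree $6-d$ with $ba$ equal to the top class. If $Sq^1 Sq^2 Sq^1 Sq^2 x$ were nonzero, no nonzero $a \in \CA(1)$ could annihilate $x$, since otherwise choosing such a $b$ would give $(ba)x = b(ax) = 0$ and force the top class to annihilate $x$. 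Thus the cyclic submodule $\CA(1)\cdot x \subseteq M$ would be isomorphic to $\CA(1)$ itself, and by self-injectivity of $\CA(1)$ this free cyclic submodule would split off as a direct summand of $M$, contradicting reducedness.

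The main obstacle is part (ii): the appeal to the Frobenius/self-injective structure of $\CA(1)$. A standard consequence of Margolis's work is the criterion that $x \in M$ generates a free summand of $M$ if and only if the top class of $\CA(1)$ acts nontrivially on $x$; if this can be cited directly, part (ii) is immediate from reducedness, and otherwise the Frobenius-pairing argument above supplies the needed statement. Part (i), by contrast, is routine once one tracks the Adem-type relations and uses the low-degree $Q_1$-acyclicity fact recorded just before the lemma.
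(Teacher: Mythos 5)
Your proposal is correct and uses essentially the same ingredients as the paper's proof: for part (i), the relations in $\CA(1)$ together with the vanishing of $\ker(Q_1)$ in degrees $\leq 2$, and for part (ii), the self-injectivity of $\CA(1)$ (cited in the paper via Margolis, Prop. 12.2.8) combined with reducedness. The only difference is cosmetic: in part (i) you run the argument by contradiction (assume $Sq^2Sq^1Sq^2x=0$, deduce $Sq^2x=0$, then contradict $Q_1$-acyclicity in degree 1), whereas the paper first establishes $Sq^2x\neq 0$ directly and then applies $Q_1$-acyclicity in degree 2; the two are logically equivalent.
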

\begin{proof}
We assume $M_k=0$ for all $k<0$ and $M_k \neq 0$. Let $x \in M_0$ be nonzero.  Then $Q_1 x \neq 0$, since $\ker Q_1=0$ in degree zero.  If $Sq^{1}x \neq 0$, then \[Q_1(Sq^{1}x)=Sq^{2}Sq^{1}Sq^{1}x + Sq^{1}Sq^{2}Sq^{1}x = Sq^{1}Sq^{2}Sq^{1}x=Sq^{2}Sq^{2}x\] must also be nonzero. So, $Sq^{2}x \neq 0$ when $Sq^1 x \neq 0$.  If $Sq^{1}x=0$, then $Q_1 x = Sq^{1}Sq^{2}x$ is nonzero, so we can conclude $Sq^{2}x\neq 0$ in this case as well. 

By virtue of degree, $Sq^{2}x \notin \text{ker }Q_{1}$.  So, 
\begin{align*} Q_{1}(Sq^{2}x) &= Sq^{2}Sq^{1}Sq^{2}x + Sq^{1}Sq^{2}Sq^{2}x  = Sq^{2}Sq^{1}Sq^{2}x  \neq 0 \end{align*}
where the second equality follows from Equation~\ref{eq3}. This proves (i).   
If $Sq^{1}Sq^{2}Sq^{1}Sq^{2}x \neq 0$, then $x$ supports a free submodule of $M$. {Since free $\CA(1)$-modules are injective over $\CA(1)$ \cite[Proposition 12.2.8]{Margolis}, this implies $x$ supports a free direct summand of $M$.} So, (ii) follows from the fact that $M$ is reduced. 
\end{proof}
\begin{lemma}\label{lemsq1=0}
Let $M$ be a reduced, connective, $Q_{0}$-local $\CA(1)$-module and let $x$ be any element of $M_{0}$.  Then $Sq^{1}x=0$.  %Consequently, there is an injective $\CA(1)$-module map $\Upsilon_1 \to M$ that maps $y_0$, the generator of $\Upsilon_1$, to $x$.
\end{lemma}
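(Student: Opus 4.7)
The plan is to assume $Sq^1 x \neq 0$ for some $x \in M_0$ and derive a contradiction from Lemma~\ref{lemidentities}. The key move is to study the auxiliary element $Sq^2 Sq^1 x \in M_3$: we will show that $Q_0$-locality forces this element to lift through $Q_1$ to some $y \in M_0$, which in turn is forced to be zero by Lemma~\ref{lemidentities}(i), so $Sq^2 Sq^1 x = 0$; the vanishing of $Sq^1 x$ then follows by a one-line calculation.

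First, I would observe that since $M$ is connective, $Q_1 \colon M_k \to M_{k+3}$ is injective for $k \leq 2$: the image of $Q_1$ into $M_k$ comes from $M_{k-3} = 0$, and combined with $\ker Q_1 = \im Q_1$ from $Q_0$-locality, this gives $\ker Q_1 \cap M_k = 0$ in those degrees. This fact is already used in the excerpt, but I record it here because it will be invoked twice.

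Next, a short calculation using relation (\ref{eq3}) and Lemma~\ref{lemidentities}(ii) shows $Q_1(Sq^2 Sq^1 x) = 0$, so $Sq^2 Sq^1 x$ lies in $\ker Q_1 \cap M_3 = Q_1(M_0)$ by $Q_0$-locality (and the fact that $Q_1$ has degree $3$, so the only possible preimages are in $M_0$). Injectivity of $Q_1$ on $M_0$ produces a unique $y \in M_0$ with $Q_1 y = Sq^2 Sq^1 x$. Applying $Sq^2$ to both sides, and using $Sq^2 Sq^2 Sq^1 = Sq^1 Sq^2 Sq^1 Sq^1 = 0$ from (\ref{eq1}) and (\ref{eq2}), the equation collapses to $Sq^2 Sq^1 Sq^2 y = 0$, which by Lemma~\ref{lemidentities}(i) forces $y = 0$. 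Hence $Sq^2 Sq^1 x = 0$.

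With $Sq^2 Sq^1 x = 0$ in hand, the conclusion is immediate: $Q_1(Sq^1 x) = Sq^1 Sq^2 Sq^1 x + Sq^2 Sq^1 Sq^1 x = Sq^1(Sq^2 Sq^1 x) = 0$, and the injectivity of $Q_1$ on $M_1$ from the first step forces $Sq^1 x = 0$. The main obstacle, in my view, is identifying $Sq^2 Sq^1 x$ as the right intermediate element to analyze; once one sees that its $Q_1$-preimage in $M_0$ is obstructed by Lemma~\ref{lemidentities}(i), the rest is routine manipulation of the Steenrod relations.
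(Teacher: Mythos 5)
Your proof is correct, and it is noticeably shorter and more direct than the one in the paper. Both proofs lift $Sq^2 Sq^1 x$ through $Q_1$ to an element of $M_0$ (your $y$, the paper's $z$), but they diverge sharply at that point. The paper works by contradiction: after assuming $Sq^1 x \neq 0$, it manipulates the equation $Q_1 z = Sq^2 Sq^1 x$ by applying $Sq^1$, concludes $Sq^1 x = Sq^1 z$, then derives $Sq^1 Sq^2 z = 0$, and finally shows the degree-$2$ class $Sq^2 z$ would have to be killed by $Q_1$---impossible by degree. Your argument instead applies $Sq^2$ directly to $Q_1 y = Sq^2 Sq^1 x$, noting that $Sq^2 Sq^2 Sq^1 = Sq^1 Sq^2 Sq^1 Sq^1 = 0$ collapses both sides to $Sq^2 Sq^1 Sq^2 y = 0$, which by Lemma~\ref{lemidentities}(i) kills $y$ outright. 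This buys two things: it is a direct proof rather than a contradiction (indeed, notice you never actually invoke the hypothesis $Sq^1 x \neq 0$---the argument shows $Sq^2 Sq^1 x = 0$ unconditionally and then $Sq^1 x \in \ker Q_1 \cap M_1 = 0$), and it avoids the somewhat delicate bookkeeping of $Sq^1 z$, $Sq^2 z$, and their interactions that occupies the second half of the paper's proof. One small stylistic point: since the hypothesis-for-contradiction plays no role, you might as well present the argument as a direct proof, which would make the structure clearer.
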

\begin{proof}
Suppose $Sq^{1}x\neq 0$. Then $Sq^{1}x \notin \ker(Q_{1})$, since $|Sq^{1}x|=1$. 
So,
\begin{align}\label{eqsq1sq2sq1} 0 \neq Q_{1}(Sq^{1}x)= Sq^{2}Sq^{1}(Sq^{1}x)+Sq^{1}Sq^{2}(Sq^{1}x) = Sq^{1}Sq^{2}Sq^{1} x  \end{align}
and thus $Sq^{2}Sq^{1}x$ is nonzero. 
%Therefore
%\begin{align}\label{eq:sq2sq2}
%Sq^{1}Sq^{2}Sq^{1}x=Sq^{2}Sq^{2}x \neq 0,
%\end{align}  
%and so, 
%\begin{align}Sq^{2}x \neq 0.
%\end{align} 
%Since $Sq^{2}Sq^{1}Sq^{2}Sq^{1} = Sq^{1}Sq^{2}Sq^{1}Sq^{2}$,
%\[ Q_{1}(Sq^{2}Sq^{1}x) = Sq^{2}Sq^{1}Sq^{2}Sq^{1}x + Sq^{1}Sq^{2}Sq^{1}Sq^{2}x = 0. \]
Using Lemma~\ref{lemidentities} and Equation~\ref{eq3}, we see
\[ Q_1(Sq^2 Sq^1 x) = Sq^{1}Sq^{2}Sq^{2}Sq^1 x + Sq^{2}Sq^{1}Sq^{2}Sq^{1}x = Sq^{2}Sq^{1}Sq^{2}Sq^{1}x = 0. \]
Since $H_\bullet (M;Q_1)=0$, $Sq^{2}Sq^{1}x$ must be in $\im(Q_{1})$. Let $z\in M_{0}$ be such that $Q_{1}z=Sq^{2}Sq^{1}x$.  Since $|z|=0$, Lemma~\ref{lemidentities} implies that $Sq^2 z \neq 0$. In the remainder of this proof, we will show $Q_1(Sq^2 z)=0$. This is a contradiction, as $Sq^2 z$ is in degree two and therefore not in the image of $Q_1$.  

Assuming $Q_1 z=Sq^2 Sq^1 x$,
\[Sq^{2}Sq^{1}z + Sq^{1}Sq^{2}z = Sq^{2} Sq^{1} x.\]
Applying $Sq^1$ to this equation, it follows that $Sq^1 Sq^2 Sq^1 z = Sq^1 Sq^2 Sq^1 x.$
So, using Equation~\eqref{eqsq1sq2sq1},
    \begin{align*}
    Sq^{1}Sq^{2}Sq^{1}z  \neq 0.
\end{align*}
Then $Sq^{1}z \neq 0$ and $Sq^{2}z \neq 0$, since $Sq^{2}Sq^{2}z=Sq^{1}Sq^{2}Sq^{1}z$. Next, note that 
\begin{align*}
    Q_{1}(Sq^{1}x+Sq^{1}z)&= Sq^{1}Sq^{2}Sq^{1}x+Sq^{1}Sq^{2}Sq^{1}z 
= 0.
\end{align*}
Since $\ker(Q_1) \cap M_1$ is zero, $Sq^{1}x=Sq^{1}z$.
Then, 
\begin{align*}
    Sq^{2}Sq^{1}x &= Q_1 z = Sq^{2}Sq^{1}z + Sq^{1}Sq^{2}z =Sq^{2}Sq^{1}x + Sq^{1}Sq^{2}z.
\end{align*}
Thus, $Sq^{1}Sq^{2}z =0$.
But then,
\[ Q_{1}(Sq^{2}z) = Sq^{2}Sq^{1}Sq^{2}z + Sq^{1}Sq^{2}Sq^{2}z = 0, \]
where we again use Equation~\ref{eq3}. So, we have reached the desired contradiction. 
\end{proof}
The final lemma we will need applies specifically to the seagull modules. 
\begin{lemma}\label{lemker1_im2}
If $N$ is a flock of seagulls, 
\[ \im\left(Sq^2\right) \cap \ker\left(Sq^1\right) = \im\left(Sq^2 Sq^1 Sq^2\right)\]
in $N$.
\end{lemma}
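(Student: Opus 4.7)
The plan is to reduce the lemma to a single seagull $\Upsilon_n$ (with $1 \le n \le \infty$) and then verify the identity by direct computation on the natural basis. Since each of $\im(Sq^2)$, $\ker(Sq^1)$, and $\im(Sq^2 Sq^1 Sq^2)$ distributes over direct sums and is preserved by shifting the grading, the identity on a flock of seagulls follows from the identity on each summand $\Upsilon_n$.

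The inclusion $\im(Sq^2 Sq^1 Sq^2)\subseteq \im(Sq^2)\cap \ker(Sq^1)$ is the easier half. The first containment is immediate. For the second, Equation~\eqref{eq4} gives $Sq^1\cdot Sq^2 Sq^1 Sq^2 = Sq^2 Sq^1 Sq^2 \cdot Sq^1$. Applied to a generator $y_{4j}$ of $\Upsilon_n$, the right-hand side vanishes: either $Sq^1 y_{4j}=0$ (when $j=0$), or $Sq^1 y_{4j}=Sq^2 Sq^1 Sq^2 y_{4(j-1)}$ and then a short direct check using Equations~\eqref{eq2} and~\eqref{eq3} shows $(Sq^2 Sq^1 Sq^2)^2=0$ in $\CA(1)$. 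By linearity, $Sq^1$ annihilates all of $\im(Sq^2 Sq^1 Sq^2)$.

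For the reverse inclusion, I would write down the explicit basis of $\Upsilon_n$: the elements
\[\{\,y_{4j},\ Sq^2 y_{4j},\ Sq^1 Sq^2 y_{4j},\ Sq^2 Sq^1 Sq^2 y_{4j}\,\}_{0\le j < n}\]
(the defining link $Sq^1 y_{4j}=Sq^2 Sq^1 Sq^2 y_{4(j-1)}$ for $j>0$ is already subsumed in this list). Using Equations~\eqref{eq2} and~\eqref{eq3}, a short calculation shows that $Sq^2$ sends each of these basis elements either to zero, to $Sq^2 y_{4j}$, or to $Sq^2 Sq^1 Sq^2 y_{4j}$; hence $\im(Sq^2)$ is spanned by $\{Sq^2 y_{4j},\ Sq^2 Sq^1 Sq^2 y_{4j}\}_j$. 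A general element $z=\sum_j a_j Sq^2 y_{4j}+\sum_j b_j Sq^2 Sq^1 Sq^2 y_{4j}$ of $\im(Sq^2)$ then has $Sq^1$-image equal to $\sum_j a_j Sq^1 Sq^2 y_{4j}$, since the second sum was just shown to lie in $\ker(Sq^1)$. The elements $Sq^1 Sq^2 y_{4j}$ are independent basis vectors (sitting in distinct degrees $4j+3$), so $Sq^1 z=0$ forces every $a_j=0$, and therefore $z\in \im(Sq^2 Sq^1 Sq^2)$.

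The main obstacle is simply keeping careful track of the identifications $Sq^1 y_{4j}=Sq^2 Sq^1 Sq^2 y_{4(j-1)}$ both when extracting the basis and when computing $Sq^1$ on $\im(Sq^2 Sq^1 Sq^2)$; once that bookkeeping is in place, each step reduces to a mechanical application of Equations~\eqref{eq1}--\eqref{eq4}.
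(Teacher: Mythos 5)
Your proof is correct and follows the same overall strategy as the paper's: reduce to a single seagull (both $\im(Sq^2)$, $\ker(Sq^1)$, and $\im(Sq^2Sq^1Sq^2)$ distribute over direct sums), fix the standard $\FF_2$-basis $\{y_{4j},\,Sq^2 y_{4j},\,Sq^1Sq^2 y_{4j},\,Sq^2Sq^1Sq^2 y_{4j}\}_{j}$, and compute by hand with the relations in $\CA(1)$. The difference in execution is worth flagging. The paper asserts the subspace identity $\ker(Sq^1) = N(1)\oplus N(Sq^1Sq^2)\oplus N(Sq^2Sq^1Sq^2)$, where $N(x)$ is the span of $x$ applied to the seagull generators, and then intersects with $\im(Sq^2)=N(Sq^2)\oplus N(Sq^2Sq^1Sq^2)$. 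But that description of $\ker(Sq^1)$ is not literally accurate: for $j>0$ the generator $y_{4j}$ satisfies $Sq^1 y_{4j}=Sq^2Sq^1Sq^2 y_{4(j-1)}\neq 0$, so $N(1)\not\subseteq\ker(Sq^1)$. The conclusion is still right because the discrepancy lives in $N(1)$, which is disjoint from $\im(Sq^2)$, but the step as stated is imprecise. Your version sidesteps this entirely: you only apply $Sq^1$ to a general element of $\im(Sq^2)$ and invoke the independence of the $Sq^1Sq^2 y_{4j}$'s. That computes exactly the intersection that is needed and nothing more, which makes it the cleaner argument. Both proofs rest on the same two computational facts: $\im(Sq^2)$ is spanned by $\{Sq^2 y_{4j},\ Sq^2Sq^1Sq^2 y_{4j}\}$ (via Equations~\eqref{eq2},~\eqref{eq3}), and $Sq^1\bigl(Sq^2Sq^1Sq^2 y_{4j}\bigr)=0$ (via Equation~\eqref{eq4} and $(Sq^2Sq^1Sq^2)^2=0$), and you verify both.
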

\begin{proof}
Let 
\[ N = \bigoplus_{i \in I} \Sigma^{\alpha_i} \Upsilon_{n_i} \]
where $\Sigma^{\alpha_i}\Upsilon_{n_i}$ is generated by $ y_{\alpha_i}, y_{\alpha_i+4}, \ldots, y_{\alpha_i+4(n_i-1)}$.  For any $x \in \CA(1)$, let
\[ N(x):= \bigoplus_{\substack{i \in I \\ j \in J_i}} \FF_2\{ x (y_{i,j}) \} \]
where $J_i =\{\alpha_i+4s | 0 \leq s \leq n_i-1\}$.

As an $\FF_2$-vector space,
\[ N \cong N(1) \oplus N(Sq^2) \oplus N(Sq^1 Sq^2) \oplus N(Sq^2 Sq^1 Sq^2).  \]
So,
\[ \ker(Sq^1) \cong N(1) \oplus N(Sq^1 Sq^2) \oplus N(Sq^2 Sq^1 Sq^2), \]
and,
\[ \im(Sq^2) \cong N(Sq^2) \oplus N(Sq^2 Sq^1 Sq^2).  \]
Examining the intersection, the result follows. 
\end{proof}

\subsection{Classification of Finite Modules}\label{secfinitesg}
Having proven some basic properties of reduced, $Q_0$-local $\CA(1)$-modules, we proceed to the proof of the main theorem in the case where $M$ is finite.  The goal of this section is to prove the following result.
\begin{prop}\label{propfiniteclass}   If $M$ is a reduced, finite, $Q_{0}$-local $\CA(1)$-module, then $M$ is isomorphic to a flock of seagulls.  \end{prop}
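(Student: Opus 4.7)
The plan is to prove Proposition~\ref{propfiniteclass} by strong induction on $\dim_{\FF_2} M$. The base case $M = 0$ is trivial. For the inductive step, after shifting $M$ so that $M_0 \neq 0$ and $M_k = 0$ for $k < 0$ (so $M$ is connective), the goal is to split off a single seagull direct summand $N \cong \Upsilon_n$ and apply the inductive hypothesis to the complement.

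To build the seagull, I would pick a nonzero $x \in M_0$ and construct inductively a sequence of classes $y_0 = x, y_4, \ldots, y_{4(n-1)}$ in $M$ satisfying $Sq^1 y_{4j} = Sq^2 Sq^1 Sq^2 y_{4(j-1)}$ for $j \geq 1$ and $Sq^2 Sq^1 Sq^2 y_{4j} \neq 0$ at each step, with $n$ chosen to be maximal. Lemmas~\ref{lemidentities} and~\ref{lemsq1=0} furnish the base case $j = 0$, and the finiteness of $M$ forces $n < \infty$. Let $N \subseteq M$ be the $\CA(1)$-submodule generated by $y_0, \ldots, y_{4(n-1)}$; the defining relations of $\Upsilon_n$ are satisfied in $N$ by construction, so there is a surjection $\Upsilon_n \twoheadrightarrow N$. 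To promote this to an isomorphism I would verify that the classes $y_{4j}, Sq^2 y_{4j}, Sq^1 Sq^2 y_{4j}, Sq^2 Sq^1 Sq^2 y_{4j}$ ($0 \leq j < n$) are all nonzero, using the identities in Section~\ref{secids} together with the nontriviality of $Sq^2 Sq^1 Sq^2 y_{4j}$ built into the construction; since they lie in pairwise distinct degrees, they are automatically linearly independent and $\dim N = 4n = \dim \Upsilon_n$.

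The crux is producing an $\CA(1)$-submodule $P \subseteq M$ with $M = N \oplus P$. A helpful first observation is that $M/N$ remains $Q_0$-local: the long exact sequence in $Q_1$-Margolis homology associated to $0 \to N \to M \to M/N \to 0$, combined with $H_\bullet(\Upsilon_n; Q_1) = 0$ and $H_\bullet(M; Q_1) = 0$, forces $H_\bullet(M/N; Q_1) = 0$. I would then split the extension by defining $P$ degree by degree as an $\FF_2$-complement to $N$ in $M$, chosen in each degree $k$ to be preserved by the $Sq^1$- and $Sq^2$-actions. The maximality of $n$ is central here: it guarantees that $Sq^2 Sq^1 Sq^2 y_{4(n-1)}$ is not an $Sq^1$-image of any viable extension class, so the top of $N$ detaches cleanly from the rest of $M$. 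Lemma~\ref{lemker1_im2}, once a partial flock-of-seagulls decomposition is in hand, helps regulate the structure near the seam by constraining where $\im(Sq^2) \cap \ker(Sq^1)$ can live.

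Once the splitting is achieved, $P$ is a reduced, finite, $Q_0$-local $\CA(1)$-module of strictly smaller $\FF_2$-dimension, so by the inductive hypothesis $P$ is a flock of seagulls, and combining with $N \cong \Upsilon_n$ exhibits $M$ itself as a flock of seagulls. I expect the construction of the complement $P$ to be the main obstacle: identifying $N \cong \Upsilon_n$ and checking that $M/N$ stays $Q_0$-local are essentially bookkeeping, but producing an $\CA(1)$-stable complement to $N$ inside $M$ requires carefully propagating the degree-$0$ complement of $\FF_2 \cdot x$ in $M_0$ upward through the $Sq^1$- and $Sq^2$-actions, and is the place where the ``reduced'' and ``$Q_0$-local'' hypotheses must be used most seriously.
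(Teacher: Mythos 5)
Your strategy --- induct on $\dim_{\FF_2} M$ and split off a single maximal seagull --- is genuinely different from the paper's, which instead inducts on the degree filtration $M^k$ and builds a flock of seagulls bottom-up, attaching $1$-seagulls one at a time to the top of the flock already assembled inside $M^{k-1}$. The advantage of the latter is that each inductive step only involves a single degree slice and an extension of a flock by $1$-seagulls, whereas your route requires controlling the seagull $N$ across its entire length simultaneously.

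The gap is exactly where you place it, but it is not a bookkeeping issue: the construction of an $\CA(1)$-stable complement $P$ is the whole content of the theorem, and your argument that ``the top of $N$ detaches cleanly'' addresses only the top degree, not the interior generators. Your appeal to Lemma~\ref{lemker1_im2} is also circular, since that lemma applies to a flock of seagulls and you do not yet know that $M$ (or even $M/N$) is one. Moreover, ``maximal'' hides real subtlety: one must maximize $n$ over all choices of the chain $y_4, y_8, \ldots$, since a locally valid but globally wrong choice produces a seagull submodule that is \emph{not} a direct summand even though its top class is not an $Sq^1$-image. For instance, take $M = \Upsilon_3\{a,b,c\} \oplus \Sigma^4 \Upsilon_1\{d\}$ with $|a|=0$, $|b|=|d|=4$, $|c|=8$. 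Starting from $y_0=a$, both $y_4=b$ and $y_4=b+d$ satisfy $Sq^1 y_4 = Sq^2 Sq^1 Sq^2 a$, but only the former extends. The choice $y_4 = b+d$ yields $N = \Upsilon_2\{a,b+d\}$, whose top $Sq^2 Sq^1 Sq^2(b+d) = Sq^1 c + Sq^2 Sq^1 Sq^2 d$ is not an $Sq^1$-image, yet $N$ does not split off: a complement $P$ would have $P_0=0$ and $P_4$ equal to $\FF_2\{b\}$ or $\FF_2\{d\}$; the first forces $Sq^1 b = Sq^2 Sq^1 Sq^2 a \in N \cap P$, while the second forces $c \in P_8$ and hence both $Sq^1 c = Sq^2 Sq^1 Sq^2 b$ and $Sq^2 Sq^1 Sq^2 d$ to lie in the one-dimensional space $P_9$. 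Proving that the genuinely maximal choice exists and splits off requires essentially the same careful basis manipulations the paper performs degree by degree in Lemmas~\ref{lemchoose_b} through~\ref{leminductive}, so the proposal as written has a substantial missing step rather than a routine one.
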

Note that finite is equivalent to bounded and finite type, so this differs from Theorem~\ref{thmclassification} in that we also require $M$ to be bounded above. 
\begin{notation}
For any graded module, $M$, we use $M^k$ to denote the smallest submodule of $M$ containing all homogeneous elements with degree no more than $k$. 
\end{notation}
This is in contrast to $M_k$, which continues to represent the elements of $M$ in degree exactly $k$. 
We will prove Proposition~\ref{propfiniteclass} via induction on $M^k$.
\begin{lemma}\label{lembasecase}
If $M$ is a reduced, finite, $Q_0$-local $\CA(1)$-module, then $M^0$ is isomorphic to a flock of 1-seagulls.
\end{lemma}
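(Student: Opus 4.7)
My plan is to observe that, since $M$ is connective, $M^{0}$ coincides with the submodule generated by $M_{0}$, and then to decompose $\CA(1) \cdot M_{0}$ as a direct sum of cyclic summands, each isomorphic to the $1$-seagull $\Upsilon_{1}$.

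First I would fix a nonzero $x \in M_{0}$ and identify $\CA(1) \cdot x$. By Lemma~\ref{lemsq1=0}, $Sq^{1} x = 0$, so the surjection $\CA(1) \twoheadrightarrow \CA(1) \cdot x$ sending $1 \mapsto x$ factors through $\Upsilon_{1}$. Lemma~\ref{lemidentities}(i) gives $Sq^{2} Sq^{1} Sq^{2} x \neq 0$, which forces each of $Sq^{2} x$, $Sq^{1} Sq^{2} x$, and $Sq^{2} Sq^{1} Sq^{2} x$ to be nonzero; Lemma~\ref{lemidentities}(ii) gives $Sq^{1} Sq^{2} Sq^{1} Sq^{2} x = 0$, and Equation~\ref{eq2} together with $Sq^{1} x = 0$ gives $Sq^{2} Sq^{2} x = Sq^{1} Sq^{2} Sq^{1} x = 0$. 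Thus the induced surjection $\Upsilon_{1} \twoheadrightarrow \CA(1) \cdot x$ is an isomorphism.

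Next I would observe that, by the same reasoning, each of the operators $Sq^{2}$, $Sq^{1} Sq^{2}$, and $Sq^{2} Sq^{1} Sq^{2}$ acts injectively on $M_{0}$: any nonzero $x \in M_{0}$ killed by one of them would also satisfy $Sq^{2} Sq^{1} Sq^{2} x = 0$, contradicting Lemma~\ref{lemidentities}(i). Among the standard $\FF_{2}$-basis of $\CA(1)$, the monomials whose rightmost factor is not $Sq^{1}$ are precisely $1$, $Sq^{2}$, $Sq^{1} Sq^{2}$, and $Sq^{2} Sq^{1} Sq^{2}$, raising degree by $0$, $2$, $3$, and $5$, so all other basis monomials annihilate $M_{0}$. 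This gives
\[ M^{0} = M_{0} \oplus Sq^{2} M_{0} \oplus Sq^{1} Sq^{2} M_{0} \oplus Sq^{2} Sq^{1} Sq^{2} M_{0}, \]
a direct sum of $\FF_{2}$-vector spaces since the summands occupy distinct degrees, with $\dim_{\FF_{2}} M^{0} = 4 \dim_{\FF_{2}} M_{0}$.

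Finally, I would pick an $\FF_{2}$-basis $\{x_{1}, \ldots, x_{n}\}$ of $M_{0}$ and set $N := \sum_{i} \CA(1) \cdot x_{i} \subseteq M^{0}$. The injectivities above force $\{Sq^{2} x_{i}\}$, $\{Sq^{1} Sq^{2} x_{i}\}$, and $\{Sq^{2} Sq^{1} Sq^{2} x_{i}\}$ to be bases of $Sq^{2} M_{0}$, $Sq^{1} Sq^{2} M_{0}$, and $Sq^{2} Sq^{1} Sq^{2} M_{0}$ respectively, so $N = M^{0}$. Comparing dimensions, $\dim_{\FF_{2}} N \leq \sum_{i} \dim_{\FF_{2}}(\CA(1) \cdot x_{i}) = 4n = \dim_{\FF_{2}} M^{0}$, so the sum is direct and $M^{0} \cong \bigoplus_{i=1}^{n} \Upsilon_{1}$. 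I do not anticipate a serious obstacle here: the real work is packaged into Lemmas~\ref{lemsq1=0} and~\ref{lemidentities}, and what remains is a dimension count across the four degrees $0, 2, 3, 5$.
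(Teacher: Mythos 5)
Your proof is correct and follows the same strategy as the paper's: invoke Lemma~\ref{lemsq1=0} to kill $Sq^1$ on $M_0$, use Lemma~\ref{lemidentities} to see that each cyclic submodule $\CA(1)\cdot x$ for nonzero $x \in M_0$ is a copy of $\Upsilon_1$, and then assemble the $\Upsilon_1$'s indexed by a basis of $M_0$ into $M^0$. The one point of departure is the last step: you verify that the sum is direct via the dimension count $\dim_{\FF_2} M^0 = 4\dim_{\FF_2} M_0$, which uses the finiteness hypothesis, whereas the paper shows the kernel of the assembled map is trivial directly (apply $Sq^2Sq^1Sq^2$ to any would-be kernel element in degree zero and invoke Lemma~\ref{lemidentities}(i)), an argument that would still go through without finiteness.
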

\begin{proof}
Let $\CB$ be an $\FF_{2}$ basis for $M_{0}$.  Then $\CB$ is a minimal $\CA(1)$ generating set for $M^{0}$.  We use the notation  $Sq^R = Sq^{r_1}Sq^{r_2}\cdots$ for any sequence $R=(r_1, r_2, \ldots)$.  For each $b \in \CB$, the map $f_{b}: \Upsilon_{1}\{y_{b}\} \to M$ given by $Sq^R y_b \mapsto Sq^R b$ is an $\CA(1)$-map because $Sq^1 b =0$ by Lemma~\ref{lemsq1=0}.  Appealing to Lemma~\ref{lemidentities}, we see this map is also an injection.  
We claim the direct sum of the $f_b$'s, 
\begin{align*}
    f :\bigoplus_{b \in \CB} \Upsilon_{1}\{y_{b}\} &\to  M 
\end{align*}
is an isomorphism. 

Since $f$ surjects onto an $\CA(1)$ generating set of $M^{0}$ and is an $\CA(1)$-module map, $f$ surjects onto $M^{0}$.  
Restricting to degree zero, $f$ is an isomorphism between the $\FF_2$ vector spaces $\FF_2\{y_b | b \in \CB\}$ and $\FF_2\{b | b \in \CB\}$. Let $y$ be a homogenous element of $\bigoplus_{b \in \CB} \Upsilon_1 \{y_b\}$ in any degree. Then $y=Sq^{R}z$ where $z$ is a linear combination of the $y_b$'s and $R \in \{(0), (2), (1,2), (2,1,2)\}$.  If $y \in \ker(f)$, then $0=f(y)=Sq^R f(z)$.  No matter what $R$ is, this implies $Sq^{2}Sq^{1}Sq^{2}f(z)=0$.  By Lemma~\ref{lemidentities}, $f(z)=0$.  Since $f$ is an isomorphism of $\FF_2$ vector spaces in degree zero, $z=0$.  Thus, $y=Sq^R z =0$ and $f$ is injective. 
\end{proof}

The rest of this section will prove that if $M^{k-1}$ is isomorphic to a flock of seagulls, then $M^k$ is also isomorphic to a flock of seagulls. We will show in Lemma~\ref{lemquotient} that $M^k/M^{k-1}$ is a flock of 1-seagulls.  So, identifying $M^k$ amounts to solving an extension problem between flocks of seagulls. 

Letting 
 \[M^{k-1} \cong \bigoplus_{i \in I} \Sigma^{\alpha_{i}} \Upsilon_{n_{i}}, \]
we will use the notation that $\Sigma^{\alpha_i}\Upsilon_{n_i}$ is generated by
$y_{i, \alpha_{i}}, y_{i, \alpha_{i}+4}, \ldots, y_{i, \alpha_{i}+4(n_{i}-1)}$,
so $y_{i,j}$ is in the $i^{\text{th}}$ seagull and has degree $j$.  Note that for all $i$, the degree of the top generator in the $i^\text{th}$-seagull, $\alpha_{i}+4(n_{i}-1)$, is no more than $k-1$.

\begin{lemma}\label{lemquotient}
If $M$ is a reduced, bounded below, $Q_0$-local $\CA(1)$-module and $M^{k-1}$ is isomorphic to a flock of seagulls, then $M^k/M^{k-1}$ is isomorphic to a sum of 1-seagulls. 
\end{lemma}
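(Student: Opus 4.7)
The plan is to translate the question about $M^k/M^{k-1}$ into one about the lowest-degree part of the full quotient $N := M/M^{k-1}$, and then to apply the structural results of Section~\ref{secids} at the bottom of $N$. Since $N$ is concentrated in degrees $\geq k$, if $N_k = 0$ then $M^k = M^{k-1}$ and the lemma is trivial, so I may assume $N_k \neq 0$. I would first show that $N$ is a reduced, bounded below, $Q_0$-local $\CA(1)$-module. Bounded below is immediate. For $Q_0$-locality, the short exact sequence
\[ 0 \to M^{k-1} \to M \to N \to 0 \]
produces a long exact sequence in $Q_1$-Margolis homology; by the inductive hypothesis $M^{k-1}$ is a flock of seagulls and hence has vanishing $Q_1$-homology, and $M$ is $Q_0$-local by assumption, so $H_\bullet(N;Q_1) = 0$. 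For reducedness, suppose $N$ had a nonzero free summand and pick a generator $[z]$ of that summand; then $Sq^1 Sq^2 Sq^1 Sq^2 [z] \neq 0$, which lifts to $Sq^1 Sq^2 Sq^1 Sq^2 z \neq 0$ in $M$. The argument in the proof of Lemma~\ref{lemidentities}(ii) then produces a free $\CA(1)$-submodule of $M$ supported on $z$, which is in fact a free summand because free $\CA(1)$-modules are injective, contradicting the reducedness of $M$.

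With $N$ reduced, $Q_0$-local, and connective after shifting by $-k$, Lemmas~\ref{lemidentities} and \ref{lemsq1=0} apply to every nonzero $x \in N_k$: one obtains $Sq^1 x = 0$, $Sq^2 Sq^1 Sq^2 x \neq 0$, and $Sq^1 Sq^2 Sq^1 Sq^2 x = 0$, precisely the relations satisfied by the bottom generator of $\Sigma^k \Upsilon_1$. Choosing an $\FF_2$-basis $\CB$ for $N_k$, I would define for each $b \in \CB$ a map $f_b : \Sigma^k \Upsilon_1\{y_b\} \to M^k/M^{k-1}$ by $Sq^R y_b \mapsto Sq^R b$; this is a well-defined $\CA(1)$-map because $Sq^1 b = 0$. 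Taking the direct sum
\[ f := \bigoplus_{b \in \CB} f_b : \bigoplus_{b \in \CB} \Sigma^k \Upsilon_1 \longrightarrow M^k/M^{k-1}, \]
surjectivity is automatic since $M^k/M^{k-1}$ is generated as an $\CA(1)$-module by $N_k$, and injectivity follows by the same case analysis used in Lemma~\ref{lembasecase}: for any $c \in N_k$ and $R \in \{(), (2), (1,2), (2,1,2)\}$, the equation $Sq^R c = 0$ forces $Sq^2 Sq^1 Sq^2 c = 0$ and hence $c = 0$ by Lemma~\ref{lemidentities}(i).

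The step I expect to be most delicate is the verification that $N$ is reduced. $Q_0$-locality comes almost for free from the long exact sequence, but reducedness relies on the observation --- implicit in the proof of Lemma~\ref{lemidentities}(ii) --- that in any reduced bounded below $\CA(1)$-module the operator $Sq^1 Sq^2 Sq^1 Sq^2$ must annihilate every homogeneous element, not just those in degree zero. Once both $Q_0$-locality and reducedness of $N$ are in hand, the rest of the argument is a straightforward shifted analogue of the base case, Lemma~\ref{lembasecase}.
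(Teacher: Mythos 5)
Your proposal is correct and takes essentially the same approach as the paper: both reduce to checking that $N := M/M^{k-1}$ is reduced, bounded below, and $Q_0$-local, and then apply the degree-zero argument of Lemma~\ref{lembasecase} to $\Sigma^{-k}N$. The one place you go beyond the paper's exposition is in justifying the reducedness of $N$, which the paper asserts without argument; your observation --- that a free generator $[z]$ of $N$ would have $Sq^1 Sq^2 Sq^1 Sq^2 [z]\neq 0$, forcing $Sq^1 Sq^2 Sq^1 Sq^2 z\neq 0$ in $M$ and hence a free (injective, therefore split) summand of $M$ --- is exactly the right justification.
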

\begin{proof}
By assumption, $M^{k-1}$ is reduced and $Q_{0}$-local. Since $M$ is also reduced, the quotient $M/M^{k-1}$ is reduced.  The short exact sequence
\[ 0 \to M^{k-1} \to M \to M/M^{k-1} \to 0 \]
induces a long exact sequence in $Q_1$-homology, so $M/M^{k-1}$ is $Q_0$-local.
Then, we can apply Lemma~\ref{lembasecase} to $\Sigma^{-k}(M/M^{k-1})$. Thus $(M/M^{k-1})^{k}=M^{k}/M^{k-1}$ is isomorphic to a sum of 1-seagulls, each generated by a class in degree $k$. 
\end{proof}
In order to describe the extension,
\[ 0 \to M^{k-1} \to M^k \to M^k/M^{k-1} \to 0 \]
it will be useful to fix an $\FF_2$ basis for $M_k$.
\begin{defn}\label{defnb}
Let $\CB$ be an $\FF_2$-basis for $\big(M^{k-1}\big)_k$ and let $K$ be the kernel of the $Sq^{1}$ action on $M$.  Then $\big(M^{k-1}\big)_k$ is a subspace of $\big(M^{k-1} + K \big)_k$ and $\CB$ can be completed to a basis for $\big(M^{k-1} + K \big)_k$ given by $\CB \cup \{b_1, \ldots, b_\ell\}$ for some $b_i \in M^k$. Complete this basis for $(M^{k-1} + K)_k \subseteq M_k$ to obtain a basis for $M_k=\big(M^{k}\big)_k$ given by $\CB\cup\{b_1, \ldots, b_\ell, b_{\ell+1}, \ldots, b_d\} $.  

Let $[b_i]$ be the class containing $b_i \in M^k$ in the quotient $M^k/M^{k-1}$.
Note that $\{[b_{1}], \ldots, [b_{\ell}]\}$ is a basis for $(K/(K \cap M^{k-1}))_k$ and $\{[b_{1}], \ldots,  [b_{d}]\}$ is a basis for $(M^k/M^{k-1})_k$.  Since $M^k/M^{k-1}$ is a flock of 1-seagulls, each $b_i$ satisfies $Sq^{2}Sq^{1}Sq^{2}b_i \neq 0$ and $Sq^{1}b_i \in M^{k-1}$.
\end{defn}
\begin{rmk}\label{rmkquotientbasis}
It follows that \[ M^k/M^{k-1} \cong \bigoplus_{1 \leq i \leq d} \Sigma^k \Upsilon_1\{[b_i]\}. \]
\end{rmk}

\begin{lemma} \label{lemplus1sgs}
The submodule of $M$ generated by $M^{k-1}$ and $\{b_{1}, \ldots, b_{\ell}\}$ is isomorphic to 
\[M^{k-1} \oplus \left(\bigoplus\limits_{1 \leq i \leq \ell} \Sigma^k \Upsilon_{1}\{w_{i}\} \right).\]\end{lemma}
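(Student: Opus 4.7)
The plan is to construct an explicit $\CA(1)$-module isomorphism from $M^{k-1} \oplus \bigoplus_{i=1}^{\ell} \Sigma^k \Upsilon_1\{w_i\}$ onto the submodule $N := \langle M^{k-1}, b_1, \ldots, b_\ell \rangle$ of $M$. On the $M^{k-1}$ summand, I would take the map to be the inclusion $M^{k-1} \hookrightarrow N$. On each $\Sigma^k \Upsilon_1\{w_i\}$ summand, I would send $w_i \mapsto b_i$ and extend $\CA(1)$-linearly. Since $\Upsilon_1 = \CA(1)\mm\CA(0)$ is the quotient of $\CA(1)$ by the left ideal generated by $Sq^1$, such an extension exists as an $\CA(1)$-module map if and only if $Sq^1 b_i = 0$. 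This is precisely the condition $b_i \in K$ built into Definition~\ref{defnb}, so the assignment defines a bona fide $\CA(1)$-map $\psi$ on the required domain.

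Surjectivity of $\psi$ is immediate: $N$ is generated as an $\CA(1)$-module by $M^{k-1}$ and $\{b_1, \ldots, b_\ell\}$, and all of these elements lie in the image of $\psi$. For injectivity, I would pass to the quotient by $M^{k-1}$. By Remark~\ref{rmkquotientbasis}, $M^k/M^{k-1}$ decomposes as $\bigoplus_{i=1}^d \Sigma^k \Upsilon_1\{[b_i]\}$, and $N/M^{k-1}$ is precisely the submodule generated by $[b_1], \ldots, [b_\ell]$, which is the direct sum of the first $\ell$ of these seagull summands. Under this identification, the induced map
\[ \bar\psi \colon \bigoplus_{i=1}^\ell \Sigma^k\Upsilon_1\{w_i\} \longrightarrow N/M^{k-1} \cong \bigoplus_{i=1}^\ell \Sigma^k\Upsilon_1\{[b_i]\} \]
is the direct sum of the obvious generator-wise isomorphisms $w_i \mapsto [b_i]$.

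To conclude injectivity of $\psi$, I would suppose $\psi\bigl(x + \sum_i \xi_i\bigr)=0$ with $x \in M^{k-1}$ and $\xi_i \in \Sigma^k\Upsilon_1\{w_i\}$. Projecting to $N/M^{k-1}$ and applying the isomorphism $\bar\psi$ forces each $\xi_i=0$, whence $\psi(x)=x=0$ as well. Therefore $\psi$ is an $\CA(1)$-module isomorphism and the lemma follows.

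I do not anticipate a serious obstacle in this lemma; the only real subtlety is the initial observation that $b_i \in K$ is exactly what is needed in order for the assignment $w_i \mapsto b_i$ to extend to an $\CA(1)$-module map out of $\Sigma^k\Upsilon_1$, after which injectivity and surjectivity reduce to unpacking the already-established structure of $M^k/M^{k-1}$ as a flock of $1$-seagulls from Lemma~\ref{lemquotient} and Remark~\ref{rmkquotientbasis}.
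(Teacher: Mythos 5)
Your proof is correct and follows essentially the same route as the paper: the same map (inclusion on $M^{k-1}$, $w_i \mapsto b_i$ on the seagull summands), the same appeal to $Sq^1 b_i = 0$ for well-definedness, and the same reduction of injectivity to the identification of $N/M^{k-1}$ via Remark~\ref{rmkquotientbasis}. The only cosmetic difference is that the paper packages the final step as an application of the five-lemma to a map of short exact sequences, whereas you unwind the same diagram chase by hand.
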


\begin{proof}  
Let $Z$ be the submodule of $M$ generated by $M^{k-1}$ and $\{b_{1}, \ldots, b_{\ell}\}$, let
\[Y=M^{k-1} \oplus \left(\bigoplus\limits_{1 \leq i \leq \ell} \Sigma^k \Upsilon_{1}\{w_{i}\} \right),\]
and let 
\[\Psi: M^{k-1}\oplus \left(\bigoplus\limits_{1 \leq i \leq \ell} \Sigma^{k}\Upsilon_{1}\{w_{i}\} \right) \to M^{k}\]
be the $\CA(1)$-map determined by inclusion on the left summand and mapping $w_i$ in the right summand to $b_{i}$. 
On the right summand, we use the fact that $Sq^1 b_i =0$ for all $1\leq i \leq \ell$ to conclude that there is such an $\CA(1)$-map.  

So, we have the diagram of $\CA(1)$-modules,
\[ \begin{tikzcd} 
0 \ar[r] & M^{k-1} \ar[r] \ar[d, "\cong" turn] & Y \ar[r] \ar[d, "\Psi"] & \bigoplus\limits_{1 \leq i \leq \ell} \Sigma^{k} \Upsilon_{1}\{w_{i}\} \ar[r] \arrow[d, "\cong" turn] & 0 \\
0\ar[r] & M^{k-1} \ar[r] & Z \ar[r] & \bigoplus\limits_{1 \leq i \leq \ell} \Sigma^{k} \Upsilon_{1}\{b_{i}\} \ar[r] & 0.
\end{tikzcd} \]
The rows are exact, the map from $M^{k-1}$ to itself is the identity, and the righthand map takes $w_{i}$ to $b_{i}$.  The construction of $\Psi$ guarantees this diagram commutes.  By the 5-lemma, $\Psi$ is thus an isomorphism.  
\end{proof}

Now we have to consider the rest of the new generators.  We will start by showing we can pick the new generators in a way that will make solving the extension problem simpler. 

\begin{lemma}\label{lemchoose_b}
The generators $\{b_{\ell+1}, \ldots, b_d\}$ can be chosen so that $Sq^1 b_i \in \im(Sq^2 Sq^1 Sq^2)$ for all $\ell+1 \leq i \leq d$.
\end{lemma}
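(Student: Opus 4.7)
The plan is to use the induction hypothesis, that $M^{k-1}$ is isomorphic to a flock of seagulls, together with reducedness of $M$, to control $Sq^1 b_i$ for $i > \ell$. First observe that $Sq^1 b_i$ lies in $(M^{k-1})_{k+1} \cap \ker(Sq^1)$: the former because $[b_i]$ generates a $1$-seagull in $M^k/M^{k-1}$, so $Sq^{1}[b_i] = 0$, and the latter because $Sq^1 Sq^1 = 0$. I would then establish a direct-sum decomposition
\[(M^{k-1})_{k+1} \cap \ker(Sq^1) \;=\; (I_1 + I_2) \oplus Y,\]
where $I_1 := \im\big(Sq^1|_{(M^{k-1})_k}\big)$, $I_2 := \im\big(Sq^2 Sq^1 Sq^2|_{(M^{k-1})_{k-4}}\big)$, and $Y$ is the $\FF_2$-span of the bottom generators of those seagull summands whose bottom class sits in degree $k+1$. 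Directness holds because the projection onto such a summand annihilates both $I_1$ and $I_2$ (the summand has no classes in degree $k$ or $k-4$) while fixing its own bottom generator; surjectivity of the sum onto $\ker(Sq^1)$ in degree $k+1$ follows by enumerating the $\ker(Sq^1)$ classes in each seagull summand.

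With this decomposition in hand, I would modify each $b_i$ by subtracting a $z_i \in (M^{k-1})_k$ with $Sq^1 z_i$ equal to the $I_1$-component of $Sq^1 b_i$; such a $z_i$ exists by the definition of $I_1$, and the modification leaves the image of $b_i$ in $M^k/M^{k-1}$ unchanged, preserving the basis property from Definition~\ref{defnb}. After this replacement we may write $Sq^1 b_i = s_i + y_i$ with $s_i \in I_2 \subseteq \im(Sq^2 Sq^1 Sq^2)$ and $y_i \in Y$, and the lemma reduces to proving $y_i = 0$.

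Suppose for contradiction that $y_i \neq 0$. Since the seagulls are direct summands of $M^{k-1}$, and within each seagull $Sq^2 Sq^1 Sq^2$ carries the bottom generator to a nonzero class (the top of the bottom $\Upsilon_1$-substructure if $n_j=1$, or $Sq^1 y_{\alpha_j+4}$ otherwise), one has $Sq^2 Sq^1 Sq^2 y_i \neq 0$. Meanwhile $Sq^2 Sq^1 Sq^2 s_i = 0$, because $s_i$ lies in the image of $Sq^2 Sq^1 Sq^2$ and the composite $(Sq^2 Sq^1 Sq^2)^2$ already vanishes in $\CA(1)$ (using $Sq^2 Sq^2 = Sq^1 Sq^2 Sq^1$ and $Sq^1 Sq^1 = 0$). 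Hence $Sq^2 Sq^1 Sq^2 Sq^1 b_i = Sq^2 Sq^1 Sq^2 y_i \neq 0$, and by Equation~\ref{eq4} this equals $Sq^1 Sq^2 Sq^1 Sq^2 b_i$. But the argument for Lemma~\ref{lemidentities}(ii) applies verbatim to any homogeneous element: a nonzero top $\CA(1)$-action on $b_i$ forces $b_i$ to generate a free $\CA(1)$-submodule, which splits off as a free summand of $M$ by self-injectivity of $\CA(1)$ \cite[Proposition 12.2.8]{Margolis}, contradicting reducedness. Therefore $y_i = 0$ and $Sq^1 b_i = s_i \in \im(Sq^2 Sq^1 Sq^2)$. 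The main subtlety is to verify the direct-sum decomposition of $\ker(Sq^1) \cap (M^{k-1})_{k+1}$ carefully using the seagull structure; the contradiction argument itself is a short manipulation of Steenrod relations combined with the reducedness obstruction already exploited in Lemma~\ref{lemidentities}.
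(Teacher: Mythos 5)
Your proposal is correct, and the core of your argument coincides with the paper's: express $Sq^1 b_i$ as an $\im(Sq^1)$-part plus a residual in $\im(Sq^2 Sq^1 Sq^2)$, then kill the $\im(Sq^1)$-part by adjusting $b_i$ by an element $z_i \in (M^{k-1})_k$, which does not affect the basis properties of Definition~\ref{defnb}. The paper does this by writing $Sq^1 b_i = Sq^1 a_i + Sq^2 c_i$ using the degree constraint on the generators of $M^{k-1}$, observing that $Sq^2 c_i \in \ker(Sq^1) \cap \im(Sq^2)$, and applying Lemma~\ref{lemker1_im2}; your version instead works out the degree-$(k+1)$ kernel of $Sq^1$ inside $M^{k-1}$ by hand, which is an acceptable substitute since the enumeration reproduces the content of Lemma~\ref{lemker1_im2} in that single degree.

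The one place where your write-up goes further than necessary is the $Y$ summand. By the paper's convention, $M^{k-1}$ is the smallest submodule of $M$ containing all homogeneous elements of degree $\le k-1$; in particular every seagull summand of $M^{k-1}$ has its bottom generator in degree at most $k-1$. So there simply are no seagull summands with bottom class in degree $k+1$, and $Y = 0$ before you start. Your contradiction argument for killing the $Y$-component is therefore vacuous. (It is, incidentally, correct as written: the reasoning behind Lemma~\ref{lemidentities}(ii) does extend to a homogeneous element $b_i$ in any degree, since a nonzero action of the top class of $\CA(1)$ forces $\CA(1) b_i$ to be free, hence a free direct summand by self-injectivity, contradicting reducedness.) But you could have avoided the whole branch by noting the degree bound on generators of $M^{k-1}$ — the same observation the paper uses to justify writing $Sq^1 b_i = Sq^1 a_i + Sq^2 c_i$ in the first place. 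With that simplification your proof is essentially the paper's proof, modulo replacing the appeal to Lemma~\ref{lemker1_im2} with a direct degree-$(k+1)$ computation.
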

\begin{proof}
Since $\left[Sq^1 b_i \right]=0$ in $M^k/M^{k-1}$, $Sq^1 b_i \in (M^{k-1})_{k+1}$. All generators of $M^{k-1}$ are in degree no more than $k-1$, so
\[ Sq^1 b_i = Sq^1 a_i + Sq^2 c_i \]
for some $a_i, c_i \in M^{k-1}$.  While $Sq^1 a_i$ may be zero, $Sq^2 c_i$ is nonzero since otherwise $a_i + b_i$ would be in $K_k \subseteq \mathrm{span}\left\{\CB \cup \{b_1, \ldots, b_\ell \}\right\}$.

Note that 
\[ Sq^1 (Sq^2 c_i ) = Sq^1(Sq^1 b_i + Sq^1 a_i) = 0. \]
Since $M^{k-1}$ is a flock of seagulls, Lemma~\ref{lemker1_im2} says 
\[ \ker(Sq^1) \cap \im(Sq^2) = \im(Sq^2 Sq^1 Sq^2) \]
in $M^{k-1}$.

So, $Sq^2 c_i \in \im(Sq^2 Sq^1 Sq^2)$. Let $\widehat{b}_i=b_i+a_i$.  Then $Sq^1 \wh{b}_i = Sq^2 c_i \in \im(Sq^2 Sq^1 Sq^2)$ and $\{b_{\ell+1}, \ldots, b_{i-1}, \wh{b}_i, b_{i+1}, \ldots, b_d\}$ satisfies:
\begin{itemize}
    \item $\CB \cup \{b_{1}, \ldots, b_{i-1}, \wh{b}_i, b_{i+1}, \ldots, b_d\}$ is a basis for $M_k$ since $a_i \in \mathrm{span}\{\CB\}$, 
    \item $\CB \cup \{b_{1}, \ldots, b_\ell\}$ is a basis for $(M^{k-1}+K)_k$ since $b_i$ has not been changed for $i\leq \ell$, and 
    \item $\{[b_{\ell+1}], \ldots, [b_{i-1}], [\wh{b}_i], [b_{i+1}], \ldots, [b_d]\}$ is a basis for $(M^k/M^{k-1})_k$ since $[b_i]=[\wh{b}_i]$. \qedhere
\end{itemize}
\end{proof}

For any $\ell \leq \lambda \leq d$, let $M(\lambda)$ be the submodule of $M$ generated by $M^{k-1}$ and $\{b_{1}, \ldots, b_{\lambda}\}$.  In Lemma~\ref{lemplus1sgs} $M(\ell)$, we showed $M(\ell)$ is isomorphic to a flock of seagulls.  Suppose $M(\lambda-1)$ is isomorphic to a sum of seagulls, $\bigoplus\limits_{i \in I_{\lambda-1}} \Sigma^{\alpha_{i}}\Upsilon_{n_{i}}$.  We'll label the generators of $\Sigma^{\alpha_{i}}\Upsilon_{n_{i}}$ as $y_{i, \alpha_{i}}, y_{i, \alpha_{i}+4}, \ldots, y_{i, \alpha_{i}+4(n_{i}-1)}$.
Note that $M(\lambda-1)\subseteq M^k$, so $\alpha_i+4(n_i-1)$, the degree of the top generator in the $i^{th}$ seagull, is no more than $k$. 

We will be interested not just in the fact that $M(\lambda)$ is a flock of seagulls, but in how the flock of seagulls has changed from $M(\lambda-1)$. At each stage, we identify each seagull in the flock $M(\lambda-1)$ as one of the following types, depicted in Figure~\ref{figLAU}
\begin{itemize}
    \item A \textbf{lengthened seagull} has a generator in degree $k$ and a generator in degree $k-4$.  This type of seagull is the result of a new 1-seagull being attached to a sum of seagulls in $M^{k-1}$ in a previous step.  The lengthened seagulls in $M(\lambda-1)$ will be indexed by $L_{\lambda-1}$.
    \item An \textbf{available seagull} has a generator in degree $k-4$, but no generator in degree $k$. So, an available seagull has a top class in degree $k+1$ that is not in the image of $Sq^1$ acting on $M(\lambda-1)$. The available seagulls in $M(\lambda-1)$ will be indexed by $A_{\lambda-1}$.
    \item An \textbf{unavailable seagull} does not have a generator in degree $k-4$.  This includes all the 1-seagulls with generators in degree $k$ that were included in $M(\ell)$.  The unavailable seagulls in $M(\lambda)$ will be indexed by $U_\lambda$.
\end{itemize}

We make this precise in the following definition.
\begin{defn}\label{defnLAU}
Let 
\begin{align*}
    L_{\lambda-1} :=& \{i \in I_{\lambda-1} | \alpha_i+4(n_i-1) = k, n_i>1\} \\
    A_{\lambda-1} :=& \{i \in I_{\lambda-1} | \alpha_i+4(n_i-1) = k-4  \} \\
    U_{\lambda-1} :=& I_{\lambda-1} \setminus (L_{\lambda-1} \cup A_{\lambda-1}).
\end{align*}
\end{defn} 

\begin{figure}[ht!]
    \centering
    \caption{The decomposition of $M(\lambda-1)$ into available, lengthened, and unavailable seagulls.}
    \label{figLAU}
    \begin{tikzpicture}[scale=0.35]
        \draw[gray, dashed] (-18,10)--(10,10);
        \node[left] at (14,10){\text{deg. }$k$};
        \SgL(8,0);
        \SgR(8,4);
        \sqone(8,4,);
        \SgL(8,8);
        \sqone(8,8,);
        \SgR(2,3);
        \SgR(4,7);
        \SgL(6,10);
        \SgL(0,5);
        \SgR(0,1);
        \sqone(0,5,);
        \SgL(-16,6);
        \SgR(-16,2);
        \sqone(-16,6,);
        \SgL(-14,6);
        \SgR(-12,-2);
        \SgL(-12,2);
        \sqone(-12,2,);
        \SgR(-12,6);
        \sqone(-12,6,);
        \SgL(-7,10);
        \SgR(-7,6);
        \sqone(-7,6,);
        \SgL(-7,2);
        \sqone(-7,10,);
        \SgR(-5,10);
        \SgL(-5,6);
        \sqone(-5,10,);
        \draw[gray](-17,3)--(-17,-3)--(-10,-3)--(-10,-1);
        \draw[gray](-13.5,-3)--(-13.5,-4);
        %\node[below] at (-14.5,-4){\Large $\bigoplus\limits_{i \in A_{\lambda-1}} \Sigma^{\alpha_i}\Upsilon_{n_i}$};
        \node[below] at (-14,-4){Available};
        \draw[gray](-9,3)--(-9,1)--(-4,1)--(-4,7);
        \draw[gray](-6.5,1)--(-6.5,0);
         %\node[below] at (-5.4,0){\Large $\bigoplus\limits_{i \in L_{\lambda-1}} \Sigma^{\alpha_i}\Upsilon_{n_i}$};
        \node[below] at (-6,0){Lengthened};
        \draw[gray](-1,2)--(-1,-1)--(9,-1)--(9,1);
        \draw[gray](4,-1)--(4,-2);
        %\node[below] at (4,-2){\Large $\bigoplus\limits_{i \in U_{\lambda-1}} \Sigma^{\alpha_i}\Upsilon_{n_i}$};
        \node[below] at (4,-2){Unavailable};
    \end{tikzpicture}
\end{figure}
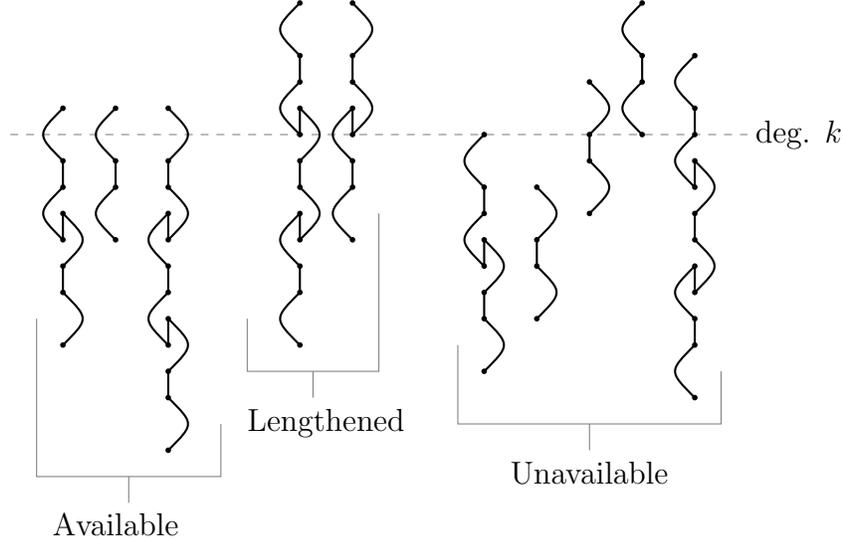

  The general idea of the rest of this section is that each time we move from $M(\lambda-1)$ to $M(\lambda)$, we would like to attach a 1-seagull to the top of an available seagull, creating a new lengthened seagull.
  However, the actual process is a little more complicated than this, as we may have to change the basis for $M(\lambda-1)$ so that $Sq^1$ on the new generator hits the top of a single available seagull.
  We will start by showing that the new generator, $b_\lambda$, can be chosen so that $Sq^1 b_\lambda$ is a sum of classes in available seagulls.  

\begin{lemma}\label{lemb_available}
The generator $b_{\lambda}$ can be chosen so that \[Sq^1 b_\lambda = Sq^2 Sq^1 Sq^2\sum\limits_{p \in P} y_{p,k-4}\] where $P \subseteq A_{\lambda-1}$.
\end{lemma}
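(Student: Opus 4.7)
The plan is to start from Lemma~\ref{lemchoose_b}, which guarantees that $Sq^1 b_\lambda$ lies in $\im(Sq^2 Sq^1 Sq^2)$ computed inside $M^{k-1}$, identify this image in degree $k+1$ using the seagull decomposition of $M(\lambda-1)$, and then eliminate the contributions coming from the lengthened seagulls by adjusting $b_\lambda$ by their degree-$k$ top generators.

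First I would compute $\im(Sq^2 Sq^1 Sq^2)_{k+1}$ inside $M(\lambda-1)$. Since every $b_i$ with $i<\lambda$ sits in degree $k$, we have $(M(\lambda-1))_{k-4} = (M^{k-1})_{k-4}$, so this image is the same whether taken in $M^{k-1}$ or in $M(\lambda-1)$. Using the seagull decomposition $M(\lambda-1) \cong \bigoplus_{i \in I_{\lambda-1}} \Sigma^{\alpha_i}\Upsilon_{n_i}$ together with the relations~\eqref{eq1}--\eqref{eq4}, a short computation shows that $Sq^2 Sq^1 Sq^2$ annihilates each of $Sq^2 y_{i,k-6}$, $Sq^1 Sq^2 y_{i, k-7}$, and $Sq^2 Sq^1 Sq^2 y_{i, k-9}$, so on $(M(\lambda-1))_{k-4}$ it is supported precisely on the seagull generators $y_{i, k-4}$. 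Since a seagull carries a generator in degree $k-4$ if and only if it belongs to $A_{\lambda-1} \cup L_{\lambda-1}$, this lets us write
\[ Sq^1 b_\lambda = Sq^2 Sq^1 Sq^2 \left(\sum_{p \in P} y_{p, k-4} + \sum_{q \in Q} y_{q, k-4}\right) \]
with $P \subseteq A_{\lambda-1}$ and $Q \subseteq L_{\lambda-1}$.

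Next, for each $q \in L_{\lambda-1}$ the defining relation of Definition~\ref{defnseagulls} between the consecutive generators $y_{q, k-4}$ and $y_{q,k}$ of the $q$-th seagull gives $Sq^1 y_{q, k} = Sq^2 Sq^1 Sq^2 y_{q, k-4}$. Setting
\[ \wt{b}_\lambda := b_\lambda + \sum_{q \in Q} y_{q, k}, \]
a computation over $\FF_2$ yields
\[ Sq^1 \wt{b}_\lambda = Sq^1 b_\lambda + \sum_{q \in Q} Sq^2 Sq^1 Sq^2 y_{q, k-4} = Sq^2 Sq^1 Sq^2 \sum_{p \in P} y_{p, k-4}, \]
which has the claimed form.

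It remains to check that $\wt{b}_\lambda$ can replace $b_\lambda$ without violating Definition~\ref{defnb} or Lemma~\ref{lemchoose_b}. Each $y_{q, k}$ lies in $(M(\lambda-1))_k = \mathrm{span}(\CB \cup \{b_1, \ldots, b_{\lambda-1}\})$, so $\wt{b}_\lambda$ differs from $b_\lambda$ by a linear combination of basis vectors of strictly smaller index; hence $\CB \cup \{b_1, \ldots, b_{\lambda-1}, \wt{b}_\lambda, b_{\lambda+1}, \ldots, b_d\}$ is still a basis for $M_k$. The basis $\CB \cup \{b_1, \ldots, b_\ell\}$ of $(M^{k-1}+K)_k$ is untouched because no $b_i$ with $i \leq \ell$ is altered. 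Finally, in the quotient, $[\wt{b}_\lambda] = [b_\lambda] + \sum_{q \in Q}[y_{q, k}]$, where each $[y_{q,k}]$ is a linear combination of $\{[b_j]\}_{j < \lambda}$, so $\{[b_1], \ldots, [\wt{b}_\lambda], \ldots, [b_d]\}$ remains a basis of $(M^k/M^{k-1})_k$. The main obstacle is the bookkeeping: separating the summands of $Sq^1 b_\lambda$ that will later attach a fresh $1$-seagull to an available seagull (the $P$-part) from those that have already been absorbed into existing lengthened seagulls (the $Q$-part); the seagull relation $Sq^1 y_{q,k} = Sq^2 Sq^1 Sq^2 y_{q, k-4}$ is exactly what allows us to cancel the latter.
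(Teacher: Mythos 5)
Your proof is correct and matches the paper's approach: invoke Lemma~\ref{lemchoose_b}, express $Sq^1 b_\lambda$ as $Sq^2 Sq^1 Sq^2$ applied to a sum of seagull generators in $A_{\lambda-1}\cup L_{\lambda-1}$, then absorb the $L_{\lambda-1}$ contributions into a change of basis via the seagull relation $Sq^1 y_{q,k} = Sq^2 Sq^1 Sq^2 y_{q,k-4}$, and verify the three basis conditions. The only thing the paper does additionally is show explicitly that the set $P$ is nonempty (which Lemma~\ref{leminductive} later relies on when picking $p_\lambda \in P$), though this is already implicit in your basis verification since $\wt{b}_\lambda \notin (M^{k-1}+K)_k$ forces $Sq^1\wt{b}_\lambda \neq 0$.
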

\begin{proof}
We chose the $b_i$'s so that $Sq^1 b_i \in \im(Sq^2 Sq^1 Sq^2)_{k+1}$ for all $i \geq \ell$ (Lemma~\ref{lemchoose_b}).  
This means we can express $Sq^1 b_\lambda$ as a sum,
\[ Sq^1 b_\lambda = Sq^2 Sq^1 Sq^2 \sum_{i \in I'} y_{i, k-4}, \]
where $I' \subseteq I_{\lambda-1}$.
Unavailable seagulls do not have a nonzero class in the image of $Sq^2 Sq^1 Sq^2$ in degree $k+1$, so $I' \subseteq A_{\lambda-1}\cup L_{\lambda-1}$. 

Let $I''=I' \cap A_{\lambda-1}$ (that is, $I''$ indexes the available seagulls whose top class is hit by $Sq^1 b_\lambda$).  We claim $I''$ is nonempty.  
If not, then all seagulls indexed by $I'$ are lengthened, so they each have a generator, $y_{i,k-4}$.  This generator is linked to the generator in degree $k$ by the relation, $Sq^2 Sq^1 Sq^2 y_{i, k-4} = Sq^1 y_{i, k}$, and thus
\[ Sq^1 b_\lambda = \sum_{i \in I'} Sq^2 Sq^1 Sq^2 y_{i, k-4} = \sum_{i \in I'} Sq^1 y_{i, k}. \]
In this case, $b_\lambda+\sum\limits_{i \in I'} y_{i,k}$ is in $K$, the kernel of the $Sq^1$ action on $M$.  Then $\sum\limits_{i \in I'} y_{i,k}$ and  $b_\lambda+\sum\limits_{i \in I'} y_{i,k}$ are both in $(M^{k-1}+K)_k$, the $\FF_2$ subspace of $(M^k)_k$ generated by $(M^{k-1})_k$ and $b_1, \ldots, b_{\ell}$.  However, their sum, $b_\lambda$, is not in $(M^{k-1}+K)_k$ by construction. So, $I''$ must be nonempty.  

If we replace $b_\lambda$ in the basis for $(M^k)_k$ with \[\wh{b}_\lambda=b_\lambda + \sum_{i \in I'\setminus I''}y_{i,k},\] then \[Sq^1 \wh{b}_\lambda = Sq^2 Sq^1 Sq^2 \sum_{i \in I''} y_{i, k-4}\] and the following properties are preserved:
\begin{itemize}
    \item $\CB \cup \{b_{1}, \ldots, b_{\lambda-1}, \wh{b}_\lambda, b_{\lambda+1}, \ldots, b_d\}$ is a basis for $M_k$ since $b_\lambda + \wh{b}_\lambda$ is in $\big(M(\lambda-1)\big)_k$ which is spanned by $\CB \cup \{b_1, \ldots, b_{\lambda-1}\}$,
    \item $\CB \cup \{b_{1}, \ldots, b_\ell\}$ is a basis for $(M^{k-1}+K)_k$ since $b_i$ is unchanged for $i \leq \ell$, and
    \item $\{[b_{\ell+1}], \ldots, [b_{\lambda-1}], [\wh{b}_\lambda], [b_{\lambda+1}], \ldots, [b_d]\}$ is a basis for $(M^k/M^{k-1})_k$ since $[b_\lambda] + [\wh{b}_\lambda]$ is in \\ $M(\lambda-1)/M^{k-1}$ which is spanned by  $\CB \cup \{[b_1], \ldots, [b_{\lambda-1}]\}$. \qedhere
\end{itemize}
\end{proof}
Now that we've shown we can assume $Sq^1 b_\lambda$ only hits the top classes of available seagulls in $M(\lambda-1)$, we can exhibit an isomorphism between $M(\lambda)$ and a flock of seagulls. 
\begin{lemma}\label{leminductive}
If, for some $\lambda \geq \ell+1$,  $M(\lambda-1)$ is isomorphic to a flock of seagulls, then $M(\lambda)$ is isomorphic to a flock of seagulls. 
\end{lemma}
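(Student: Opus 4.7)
The plan is to induct on the cardinality of the index set $P$ supplied by Lemma~\ref{lemb_available}, which we may assume describes the $Sq^1$-action on $b_\lambda$: $Sq^1 b_\lambda = Sq^2 Sq^1 Sq^2 \sum_{p \in P} y_{p, k-4}$ with $P \subseteq A_{\lambda-1}$. First note that $P$ is nonempty, since $\lambda > \ell$ means $b_\lambda \notin (M^{k-1}+K)_k$ by Definition~\ref{defnb}, so $Sq^1 b_\lambda \neq 0$.

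For the base case $|P|=1$, write $P=\{p_0\}$. Let $N$ be the flock of seagulls obtained from the decomposition of $M(\lambda-1)$ by replacing the summand $\Sigma^{\alpha_{p_0}}\Upsilon_{n_{p_0}}$ by $\Sigma^{\alpha_{p_0}}\Upsilon_{n_{p_0}+1}$, and call the new top generator $w_{p_0, k}$. Define $\Psi : N \to M(\lambda)$ to be the identity on all unchanged summands, send each old generator of the $p_0$-th seagull to the corresponding $y_{p_0, j}$ in $M(\lambda-1)$, and send $w_{p_0, k} \mapsto b_\lambda$. The only relation to verify is $Sq^1 w_{p_0, k} = Sq^2 Sq^1 Sq^2 w_{p_0, k-4}$, which under $\Psi$ becomes exactly $Sq^1 b_\lambda = Sq^2 Sq^1 Sq^2 y_{p_0, k-4}$. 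A five-lemma argument applied to the short exact sequences
\[ 0 \to M(\lambda-1) \to N \to \Sigma^k\Upsilon_1 \to 0 \quad\text{and}\quad 0 \to M(\lambda-1) \to M(\lambda) \to \Sigma^k\Upsilon_1 \to 0, \]
as in the proof of Lemma~\ref{lemplus1sgs}, then shows $\Psi$ is an isomorphism.

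For the inductive step $|P|>1$, pick $p_0 \in P$ with $n_{p_0} = \min_{p \in P} n_p$ and any $p_1 \in P \setminus \{p_0\}$. Since $n_{p_1} \ge n_{p_0}$, both $y_{p_1, j}$ and $y_{p_0, j}$ exist for every $j \in \{\alpha_{p_0}, \alpha_{p_0}+4, \ldots, k-4\}$. Replace the generators of the $p_1$-th seagull in this range by $y'_{p_1, j} := y_{p_1, j} + y_{p_0, j}$, keeping all other generators of $M(\lambda-1)$ unchanged. Routine checks using the seagull relations for the old generators and the fact that $Sq^1 y_{p_0, \alpha_{p_0}} = 0$ show that the submodule generated by the $y'_{p_1, j}$'s is again isomorphic to $\Sigma^{\alpha_{p_1}}\Upsilon_{n_{p_1}}$, and since its spanning set together with that of the unchanged $p_0$-th summand spans the same $\FF_2$-subspace as the original, we obtain a new flock-of-seagulls decomposition of $M(\lambda-1)$.

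Substituting $y_{p_1, k-4} = y'_{p_1, k-4} + y_{p_0, k-4}$ in the formula for $Sq^1 b_\lambda$ collapses the two terms indexed by $p_0$ and $p_1$ into $Sq^2 Sq^1 Sq^2 y'_{p_1, k-4}$, yielding
\[ Sq^1 b_\lambda = Sq^2 Sq^1 Sq^2 \left( y'_{p_1, k-4} + \sum_{p \in P \setminus \{p_0, p_1\}} y_{p, k-4} \right), \]
so the index set in the new decomposition has size $|P|-1$. Applying the inductive hypothesis completes the step. The main obstacle is verifying consistency of the change of basis all the way down to degree $\alpha_{p_0}$; this is exactly what forces the choice of $p_0$ with minimum length in $P$, so that the bottommost relation $Sq^1 y_{p_0, \alpha_{p_0}} = 0$ is available to make the modified $p_1$-th seagull relations close up correctly.
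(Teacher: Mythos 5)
Your proposal is correct, and it reaches the result by a genuinely different route from the paper. Both arguments begin from Lemma~\ref{lemb_available}, which gives $Sq^1 b_\lambda = Sq^2 Sq^1 Sq^2\sum_{p\in P}y_{p,k-4}$ with $P\subseteq A_{\lambda-1}$ nonempty, and both hinge on the observation that one must be careful in low degrees so that the seagull relations close up. The paper does everything in a single step: it chooses $p_\lambda\in P$ with \emph{minimal} $\alpha_{p_\lambda}$ (the longest available seagull hit by $Sq^1 b_\lambda$), lengthens that one summand to form $X$, and defines $\Phi:X\to M(\lambda)$ by sending the generators $x_j$ of the lengthened summand to the \emph{sums} $\sum_{p\in P}y_{p,j}$ (and $x_k\mapsto b_\lambda$), with the relation $Sq^1 x_{\alpha_{p_\lambda}}\mapsto 0$ holding precisely because $\alpha_{p_\lambda}\le\alpha_p$ for all $p\in P$. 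The change of basis is thus absorbed into the definition of $\Phi$ and dispatched at once with the five lemma. You instead induct on $|P|$, reducing to $|P|=1$ by the pairwise basis change $y_{p_1,j}\mapsto y_{p_1,j}+y_{p_0,j}$ with $p_0$ the \emph{shortest} seagull in $P$ and $p_1$ any other, absorbing the $p_0$ term into $p_1$; here the constraint $n_{p_0}\le n_{p_1}$ (equivalently $\alpha_{p_0}\ge\alpha_{p_1}$) ensures the modified range lies inside the $p_1$-th seagull and the bottom relation $Sq^1 y_{p_0,\alpha_{p_0}}=0$ keeps the new seagull well-defined. The two choices (longest in the paper, shortest absorbed into a longer one in yours) are really the same degree constraint seen from opposite ends; the paper's one-shot map is terser, while your incremental version isolates the binary basis-change move and makes it visible that, after iterating, the seagull that ultimately gets lengthened is one of maximal length in $P$, recovering the paper's choice implicitly. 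One small presentational point: you should be explicit that, after the basis change, the new $P'=P\setminus\{p_0\}$ indexes available seagulls of the \emph{new} decomposition, so the inductive hypothesis applies with this re-chosen decomposition; as written this is implicit but worth a sentence.
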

\begin{proof}
  Recall from Remark~\ref{rmkquotientbasis} that \[M^k/M^{k-1} \cong \bigoplus\limits_{1 \leq i \leq d} \Sigma^k \Upsilon_1\{[b_i]\}.\]
So, for all $1 \leq j \leq d$, \[M(j)/M^{k-1} \cong \bigoplus_{1 \leq i \leq j} \Sigma^k \Upsilon_1 \{[b_i]\}, \]
and therefore,
\[ M(\lambda)/M(\lambda-1) \cong \big(M(\lambda)/M^{k-1}\big)/\big(M(\lambda-1)/M^{k-1}\big) = \Sigma^{k}\Upsilon_{1}\{[b_{\lambda}]\}. \]

We have from Lemma~\ref{lemb_available} that 
\[ Sq^1 b_\lambda = Sq^2 Sq^1 Sq^2 \sum_{p \in P} y_{p,k-4} \]
for some $P \subseteq A_{\lambda-1}$.
Let $p_\lambda \in P$ be such that $\alpha_{p_\lambda}$ is minimal in $\{\alpha_p : p \in P\}$.  (That is, no seagull whose top class is hit by $Sq^1 b_\lambda$ starts below the $p_\lambda^{\text{th}}$ seagull.) This selects the longest seagull indexed by $P$ (or perhaps one of multiple longest seagulls).

Let
\[ X = \left(\bigoplus_{\substack{i \in I_{\lambda-1}\\i \neq p_\lambda }} \Sigma^{\alpha_i} \Upsilon_{n_i} \right) \oplus \Sigma^{\alpha_{p_\lambda}} \Upsilon_{n_{p_\lambda}+1}. \]
I.e., $X$ is obtained from $M(\lambda-1)$ by attaching a 1-seagull to the top of the $p_\lambda^{\text{th}}$ seagull. Since the left summand of $X$ is a submodule of $M(\lambda-1)$, we will keep the notation that $\Sigma^{\alpha_i}\Upsilon_{n_i}$ in this summand is generated by $y_{i, \alpha_i}, y_{i, \alpha_i+4}, \ldots, y_{i, \alpha_i+4(n_i-1)}$.  The generators of $\Sigma^{\alpha_{p_\lambda}}\Upsilon_{n_{p_\lambda}+1}$ will be named 
\[ x_{\alpha_{p_\lambda}}, x_{\alpha_{p_\lambda}+4}, \ldots, x_{k}. \]

Let $\Phi:X \to M(\lambda)$ be the $\CA(1)$-module map determined by:
\begin{align*}
    \Phi(y_{i,j}) &= y_{i,j}
\\
    \Phi(x_j) &= \begin{cases} \sum\limits_{p \in P} y_{p,j} & j<k \\ b_\lambda & j=k. \end{cases}
\end{align*}
If the generator $y_{p,j}$ does not exist for some $p \in P$, omit the term from the sum for $\Phi(x_j)$.  
On the left summand of $X$, it is clear that there is a well defined $\CA(1)$-map determined by this mapping of the generators.  For the right summand, we need to make sure \[\Phi(Sq^{1}x_{j})=\begin{cases} \Phi(Sq^{2}Sq^{1}Sq^{2}x_{j-4}) & j > \alpha_{p_\lambda} \\ 0 & j=\alpha_{p_\lambda}.  \end{cases}\]
In the case that $j=k$,
\begin{align*}
    \Phi(Sq^{1}x_{k}) &= Sq^{1}b_{\lambda} = Sq^{2}Sq^{1}Sq^{2}\sum_{p \in P} y_{p, k-4} = \Phi(Sq^{2}Sq^{1}Sq^{2}x_{k-4}).
\end{align*}
If $j<k$,
\begin{align*}
    \Phi(Sq^{1}x_{j}) &= Sq^{1}\sum_{p \in P} y_{p, j} = Sq^{2}Sq^{1}Sq^{2}\sum_{p \in P} y_{p, j-4}
\end{align*}
again, omitting any $y_{p, j-4}$ that does not exist. 
If $j>\alpha_{p_\lambda}$, then this is equal to $\Phi(Sq^{2}Sq^{1}Sq^{2}x_{j-4})$.  If $j=\alpha_{p_\lambda}$ then, since $\alpha_{p_\lambda} \leq \alpha_p$ for all $p \in P$, $Sq^1 y_{p,\alpha_{p_\lambda}}=0$ for all $p \in P$ such that $y_{p, \alpha_{p_\lambda}}$ exists. Thus, $\Phi\left(Sq^1 x_{\alpha_{p_\lambda}}\right)=0$.

Then we have the following commutative diagram of $\CA(1)$ modules:
\[\begin{tikzcd}
0 \ar[r] & \bigoplus\limits_{i \in I_{\lambda-1}} \Sigma^{\alpha_{i}}\Upsilon_{n_{i}} \ar[r] \ar[d, "\cong" turn] & X \ar[r] \ar[d, "\Phi"] & \Sigma^{k} \Upsilon_{1}\{x_k\} \ar[r] \ar[d, "\cong" turn] & 0 \\
0 \ar[r] & M(\lambda-1) \ar[r]  & M(\lambda) \ar[r]  & \Sigma^{k}\Upsilon_{1}\{b_{\lambda}\} \ar[r]  & 0 
\end{tikzcd}\]
The rows are exact so, by the 5-lemma, $\Phi$ is an isomorphism. 
\end{proof}

Proposition~\ref{propfiniteclass} follows by induction. 
In the following section, we will need not just the fact that $M^k$ is a flock of seagulls, but also the following more specific facts about the structure of $M^k$.
\begin{notation}\label{notationdecomp}
Since $M^k=M(d)$,
\[ M^{k} \cong \left(\bigoplus_{i \in L_{d}} \Sigma^{\alpha_i} \Upsilon_{n_i} \right) \oplus \left(\bigoplus_{i \in A_{d}} \Sigma^{\alpha_i} \Upsilon_{n_i} \right)
\oplus \left(\bigoplus_{i \in U_{d}} \Sigma^{\alpha_i} \Upsilon_{n_i} \right). \]
Recall from Definition~\ref{defnLAU} that the lengthened seagulls (indexed by $L_d$) have generators in degrees $k-4$ and $k$, the available seagulls (indexed by $A_d$) have a generator in degree $k-4$ but no generator in degree $k$, and  the unavailable seagulls (indexed by $U_d$) do not have a generator in degree $k-4$.  We will use the notation $L:= L_d$, $A:= A_d$, and $U:=U_d$.
\end{notation}
\begin{cor}\label{cordecomp}
There are $d-\ell$ lengthened seagulls in $M^k$.
\end{cor}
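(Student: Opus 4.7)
The plan is to count lengthened seagulls along the filtration $M(\ell) \subseteq M(\ell+1) \subseteq \cdots \subseteq M(d) = M^k$ used in this section, showing that the count starts at $0$ and increases by exactly $1$ at each step.

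For the base case, I would invoke Lemma~\ref{lemplus1sgs}, which identifies $M(\ell)$ with $M^{k-1} \oplus \bigoplus_{1 \leq i \leq \ell} \Sigma^k \Upsilon_1\{w_i\}$. Each new summand has $n_i = 1$, so by Definition~\ref{defnLAU} it fails the condition $n_i > 1$ and is unavailable rather than lengthened. Every seagull summand of $M^{k-1}$ has top generator in some degree at most $k-1$ (since $M^{k-1}$ is generated in degrees $\leq k-1$), so none of them satisfy $\alpha_i + 4(n_i-1) = k$ either. Hence $|L_\ell| = 0$.

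For the inductive step, I would read off the change in decomposition directly from the proof of Lemma~\ref{leminductive}: passing from $M(\lambda-1)$ to $M(\lambda)$ replaces the summand $\Sigma^{\alpha_{p_\lambda}} \Upsilon_{n_{p_\lambda}}$ by $\Sigma^{\alpha_{p_\lambda}} \Upsilon_{n_{p_\lambda}+1}$ and leaves every other summand unchanged. Because $p_\lambda \in A_{\lambda-1}$, the old summand is available ($\alpha_{p_\lambda} + 4(n_{p_\lambda}-1) = k-4$) and in particular not lengthened; the new summand has top generator in degree $\alpha_{p_\lambda} + 4 n_{p_\lambda} = k$ and $n_{p_\lambda} + 1 \geq 2$, so it joins $L_\lambda$. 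Thus $|L_\lambda| = |L_{\lambda-1}| + 1$, and iterating from $\lambda = \ell+1$ to $\lambda = d$ yields $|L| = |L_d| = d - \ell$.

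There is no serious obstacle here; the corollary is essentially a bookkeeping consequence of Lemma~\ref{leminductive}. The only point to be mildly careful about is distinguishing the 1-seagulls added at the base step (which are unavailable because $n_i = 1$) from the extensions of available seagulls constructed at later steps (which become lengthened because $n_{p_\lambda} + 1 > 1$).
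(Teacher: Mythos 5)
Your proof is correct and follows the same approach as the paper: set $|L_\ell| = 0$ from Lemma~\ref{lemplus1sgs}, then increment by one at each step using the explicit decomposition of $M(\lambda)$ in Lemma~\ref{leminductive}. Your base case is argued in slightly more detail (you separate the two reasons no lengthened seagull exists in $M(\ell)$: the new $1$-seagulls fail $n_i > 1$, and $M^{k-1}$ has no generator in degree $k$), but this is the same proof.
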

\begin{proof} 
Since 
\[M(\ell) \cong M^{k-1} \oplus \bigoplus_{1 \leq i \leq \ell} \Sigma^k \Upsilon_1, \]
the only seagulls in $M(\ell)$ with the top class in degree $k+5$ are of length one. So, $|L_\ell|=0$.

In the proof of Lemma~\ref{leminductive}, we showed
\[ M(\lambda) \cong  \left(\bigoplus_{\substack{i \in I_{\lambda-1}\\i \neq p_\lambda }} \Sigma^{\alpha_i} \Upsilon_{n_i} \right) \oplus \Sigma^{\alpha_{p_\lambda}} \Upsilon_{n_{p_\lambda}+1}\]
for some special $p_\lambda \in A_{\lambda-1}$.
So, for all $\lambda$,
\begin{align*}
    |L_\lambda|&=|L_{\lambda-1}| +1 \\
    |A_\lambda|&=|A_{\lambda-1}| - 1 \\
    U_\lambda&=U_{\lambda-1},
\end{align*}
Then $|L_d|=d -\ell$.
\end{proof}

\subsection{Classification of Bounded Below Modules}\label{secinfinitesg}
In classifying $Q_0$-local modules that are bounded below and of finite type, but possibly infinite, our strategy is to decompose the module into a sum of finite modules and a summand with no finite summands.  The sum of finite modules is then already shown to be a flock of seagulls, and we only need to consider the term with no finite summands.  We begin by showing this type of decomposition exists.  In the following lemmas, $\CB$ is any finite subalgebra of the Steenrod algebra.

\begin{lemma}\label{lempartone}
For any bounded below $\CB$-module, $K$, of finite type, and any integer $n$, there is a decomposition $K \cong N[n] \oplus K(n)$, such that $N[n] \subseteq K^n$ and $K(n)$ has no direct summands that are submodules of $K^n$.
\end{lemma}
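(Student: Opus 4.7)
My plan is to prove the lemma by a straightforward maximality argument, exploiting finite-dimensionality.

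The first step is to show that $K^n$ is a finite-dimensional $\FF_2$-vector space. Since $K$ is bounded below and of finite type, there are only finitely many nonzero homogeneous elements in $K$ of degree $\leq n$ (up to $\FF_2$-linear combinations). Hence $K^n$ is finitely generated as a $\CB$-module, and since $\CB$ is itself finite-dimensional over $\FF_2$, it follows that $\dim_{\FF_2}(K^n) < \infty$. In particular, any submodule of $K^n$ is finite-dimensional over $\FF_2$.

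Next, I would consider the collection $\mathcal{S}$ of submodules $N \subseteq K^n$ which occur as direct summands of $K$. This collection is nonempty, since $0 \in \mathcal{S}$, and each $N \in \mathcal{S}$ satisfies $\dim_{\FF_2}(N) \leq \dim_{\FF_2}(K^n) < \infty$. Choose $N[n] \in \mathcal{S}$ of maximal $\FF_2$-dimension, and choose any complementary summand $K(n)$ so that $K \cong N[n] \oplus K(n)$. By construction $N[n] \subseteq K^n$, which gives the first required condition.

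Finally, I would verify that $K(n)$ has no direct summand that is a submodule of $K^n$. Suppose for contradiction that $K(n) \cong N' \oplus K''$ with $N' \neq 0$ and $N' \subseteq K^n$. Then $K \cong (N[n] \oplus N') \oplus K''$, so $N[n] \oplus N'$ is a direct summand of $K$, and it is contained in $K^n$ since both $N[n]$ and $N'$ are. Hence $N[n] \oplus N' \in \mathcal{S}$, but
\[ \dim_{\FF_2}(N[n] \oplus N') = \dim_{\FF_2}(N[n]) + \dim_{\FF_2}(N') > \dim_{\FF_2}(N[n]), \]
contradicting the maximality of $N[n]$. The main obstacle — really the only one — is verifying the finite-dimensionality of $K^n$; after that, the argument is an elementary dimension-counting maximality argument that does not require Zorn's lemma.
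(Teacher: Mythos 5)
Your proof is correct and follows essentially the same approach as the paper: both rely on the finiteness of $K^n$ (bounded below plus finite type plus $\CB$ finite) to bound the search, and both select a maximal direct summand of $K$ contained in $K^n$. The paper phrases this as an iterative process (repeatedly split off nonzero submodules of $K^n$ until none remain, terminating because $K^n$ is finite), whereas you pick an element of maximal $\FF_2$-dimension in a single step and derive a contradiction — logically the same argument, just packaged as a maximality principle rather than an explicit iteration.
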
 
\begin{proof}
If no nonzero submodule of $K^n$ splits off of $K$, let $N[n]=0$ and $K(n)=K$.  

Otherwise, let $N^{(1)}$ be any nonzero submodule (possibly zero) of $(K^{n})^{(1)}:=K^n$ that splits off of $K$.  
Given a decomposition of $K$,
 \[ K \cong N^{(1)} \oplus N^{(2)} \oplus \cdots \oplus N^{(k-1)} \oplus K', \]
 where each $N^{(i)}$ is a submodule of $K^n$, let $N[n]=  N^{(1)} \oplus N^{(2)} \oplus \cdots \oplus N^{(k-1)}$ and $K(n)=K'$ if $K'\cap K^n$ has no nonzero submodule that splits off of $K$.  Otherwise, let $N^{(k)}$ be any nonzero submodule of $K' \cap K^n$ that splits off of $K$.  
Since $K^n$ is finite, iteration of this process will conclude with the selection of an $N[n]$ and $K(n)$.
\end{proof}

\begin{lemma}\label{lemparttwo}
Any bounded below $\CB$-module, $M$, of finite type, splits as a direct sum, \[M(\infty) \oplus N(\infty),\] where $N(\infty)$ is a (possibly infinite) direct sum of finite modules and $M(\infty)$ has no finite summands. 
\end{lemma}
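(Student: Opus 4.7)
The plan is to iteratively apply Lemma~\ref{lempartone} at successive degrees $n = 0, 1, 2, \ldots$, stripping off those finite summands that first appear in the $n$-th filter, and then assemble $N(\infty)$ and $M(\infty)$ as a limit. Concretely, I would set $M(-1) := M$ and recursively produce splittings $M(n-1) = N^{(n)}[n] \oplus M(n)$ via Lemma~\ref{lempartone}, so that $N^{(n)}[n] \subseteq M(n-1)^n$ and $M(n)$ has no summand contained in $M(n-1)^n$. Since $\CB$ is finite and $M$ is bounded below and of finite type, each $M(n-1)^n$ is a finite $\CB$-module, so every $N^{(n)}[n]$ is finite. Define $N(\infty) := \bigoplus_{n \geq 0} N^{(n)}[n]$ (a direct sum of finite modules) and $M(\infty) := \bigcap_{n \geq 0} M(n)$.

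Next I would verify that $M = N(\infty) \oplus M(\infty)$. The key observation is that for each fixed degree $m$, the sequence $M(n)_m$ is a decreasing chain of finite-dimensional $\FF_2$-vector spaces, and so stabilizes at $M(\infty)_m$ for $n$ sufficiently large. This allows one to decompose any homogeneous $x \in M_m$ via the iterated splittings, stopping once the chain stabilizes, to obtain $x = n_0 + n_1 + \cdots + n_{K-1} + x_K$ with $n_i \in N^{(i)}[i]$ and $x_K \in M(\infty)$; thus $M = N(\infty) + M(\infty)$. The uniqueness of the direct-sum decomposition $M = N[0] \oplus N'[1] \oplus \cdots \oplus N^{(J)}[J] \oplus M(J)$ at each finite stage forces $N(\infty) \cap M(\infty) = 0$.

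Finally, I would show $M(\infty)$ has no nonzero finite summand. Suppose for contradiction that $F \subseteq M(\infty)$ is one. Since $F$ is bounded, choose $n \geq 1$ with $F \subseteq M_{\leq n}$; then $F \subseteq M(\infty) \cap M_{\leq n} \subseteq M(n-1)_{\leq n} \subseteq M(n-1)^n$. Since $F$ is a summand of $M(\infty)$ and $M(\infty)$ is a summand of $M$ by the previous step, composing retractions gives $F$ as a summand of $M$ with retraction $\pi : M \to F$. Because $F \subseteq M(n)$, restricting $\pi$ to $M(n)$ produces a retraction $M(n) \to F$, exhibiting $F$ as a summand of $M(n)$ contained in $M(n-1)^n$, contradicting the defining property of $M(n)$ from Lemma~\ref{lempartone}. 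The main obstacle is the bookkeeping for the limiting construction: one must use the degree-wise finite-dimensionality carefully both to make sense of $M(\infty) = \bigcap_n M(n)$ as an actual complement (rather than merely a degreewise complement) and to ensure that a hypothetical finite summand of $M(\infty)$ can be detected at some finite stage of the iteration.
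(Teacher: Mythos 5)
Your proposal is correct and follows essentially the same approach as the paper: iteratively apply Lemma~\ref{lempartone} to strip off finite summands, set $N(\infty) = \bigoplus N[n]$ and $M(\infty) = \bigcap M(n)$. You supply two verifications the paper leaves implicit — the degree-wise stabilization argument showing $M = N(\infty) \oplus M(\infty)$, and the retraction argument that a finite summand of $M(\infty)$ would be detected at a finite stage — both of which are sound and genuinely needed to make the limiting step rigorous.
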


\begin{proof}  After possibly shifting $M$, we may assume $M_k=0$ for all $k\leq 0$. 
Applying Lemma~\ref{lempartone} to $K=M$ with $n=0$, we obtain $N[0]$ and $M(0)$ such that $M \cong N[0] \oplus M(0)$, $N[0] \subseteq M^0$, and $M(0)$ has no direct summands contained in $M^0$.
Given \[M \cong N[0] \oplus \cdots \oplus N[n-1] \oplus M(n-1)\] where $N[k] \subseteq M^{k}$ for all $k$ and $M(n-1)$ has no direct summands contained in $M^{n-1}$, we apply Lemma~\ref{lempartone} to $K=M(n-1)$ and obtain modules $N[n]$ and $M(n)$ such that
\[ M(n-1) \cong N[n] \oplus M(n), \]
$N[n] \subseteq M(n-1)^n \subseteq M^n$, and $M(n)$ has no direct summands contained in $M(n-1)^n$ (and thus no direct summands contained in $M^n$). 

We take $N(\infty)$ to be $\left( \bigoplus\limits_{n \geq 0} N[n]\right)$. Each $N[n]$ is finite, as it is a submodule of the finite module $M^n$, so this satisfies the description of $N(\infty)$ as a direct sum of finite modules. 
 The intersection of the $M(n)$'s has no finite summand, since any finite summand would split off of some $M^n$ and thus would not be contained in $M(n)$. So, we can take $M(\infty)$ to be $\left(\bigcap\limits_{n \geq 0} M(n) \right)$.
\end{proof}

So, in order to prove Theorem~\ref{thmclassification}, it remains to classify reduced, bounded below $Q_0$-local $\CA(1)$-modules of finite type that have no finite summands.  We will use the following lemma to help in identifying finite summands.   
\begin{lemma}\label{lemsplit}
Let $M$ be a bounded below $\CA(1)$-module.  If $N$ is a direct summand of $M^{k}$ and all elements of $N$ have degree no more than $k+1$, then $N$ splits off as a direct summand of $M$.
\end{lemma}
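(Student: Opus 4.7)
The plan is to produce an explicit complementary submodule $P$ of $N$ in $M$ by choosing $P$ to agree with a complement of $N$ in $M^{k}$ in low degrees, to include all of $M$ in degrees above $k+1$ (where $N$ vanishes), and to interpolate in degree $k+1$ using a vector space complement. Since $N$ is a direct summand of $M^k$, I can write $M^k = N \oplus N'$ for some $\CA(1)$-submodule $N' \subseteq M^k$. Because $(M^k)_{k+1}$ is an $\FF_2$-subspace of $M_{k+1}$, I can pick an $\FF_2$-complement $V$ so that $M_{k+1} = (M^k)_{k+1} \oplus V = N_{k+1} \oplus N'_{k+1} \oplus V$.

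Next, I define $P$ as the graded $\FF_2$-subspace of $M$ with
\[ P_j = \begin{cases} N'_j & j \leq k, \\ N'_{k+1} \oplus V & j = k+1, \\ M_j & j \geq k+2. \end{cases}\]
The hypothesis that $N$ is concentrated in degrees $\leq k+1$ forces $N_j = 0$ for $j \geq k+2$, so degree-by-degree one has $M_j = N_j \oplus P_j$. It therefore remains to verify that $P$ is closed under the $\CA(1)$-action, which I will do by case analysis on the degree of $p \in P$. If $|p| \geq k+2$, then $Sq^r p$ lies in degree $\geq k+2$ where $P$ is all of $M$. If $|p| = k+1$ and $p \in V$, then for $r \geq 1$ the result $Sq^r p$ again lies in degree $\geq k+2$. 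If $|p| \leq k+1$ and $p \in N'$, then $Sq^r p \in N'$ since $N'$ is an $\CA(1)$-submodule; when $|Sq^r p| \leq k+1$, this lies in $N'_{|Sq^r p|} \subseteq P$, and when $|Sq^r p| \geq k+2$, it lies in $M_{|Sq^r p|} = P_{|Sq^r p|}$.

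With $\CA(1)$-stability of $P$ established, $M = N \oplus P$ as $\CA(1)$-modules, which is the required splitting. The whole argument is essentially a bookkeeping exercise; the one step that needs care is the degree $k+1$ case, where elements of $M^k$ can sit alongside genuinely new classes of $M$, and where the $Sq^r$-action on $V$ could in principle mix with $N$---but that mixing is impossible precisely because the output lands in degree $\geq k+2$, where $N$ is zero. So I expect the main (mild) obstacle to be verifying the case analysis and, before that, being explicit about the identification $M^k = N \oplus N'$ in each degree $\leq k+1$, so that the vector space complement $V$ is chosen cleanly inside $M_{k+1}$.
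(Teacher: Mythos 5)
Your proof is correct and takes essentially the same approach as the paper: where you construct the complementary $\CA(1)$-submodule $P$ directly, the paper instead extends the retraction $\pi\colon M^k \to N$ by zero to a map $p\colon M \to N$ and checks $\CA(1)$-linearity of $p$ by the same degree bookkeeping (your $P$ is exactly $\ker p$). The two formulations are dual and rest on the identical observation that the degree-shift of any nonzero $Sq^r$ pushes everything past the top degree $k+1$ of $N$, so no interaction between $N$ and the new classes in degree $k+1$ can occur.
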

\begin{proof}
The inclusion of $N$ into $M^k$ is split by the projection $\pi:M^k \to N$.  Let $p:M \to N$ be the map of $\FF_2$-vector spaces given by 
\[ p(x) = \begin{cases} \pi(x) & x \in M^k \\ 0 & \text{otherwise.} \end{cases} \]
For any $x \in M^k$ and any $R$, $Sq^R x \in M^k$, so
\[ p\left(Sq^R x\right) = \pi\left( Sq^R x \right) = Sq^R \pi(x) = Sq^R p(x). \]

If $x \notin M^k$, then $|x| \geq k+1$. So, for any $R\neq (0)$, $|Sq^R x| \geq k+2$.  So, if $Sq^R x \in M^k$ then $p\left( Sq^R x \right)=\pi\left(Sq^R x \right)$ is zero, as the top class of $N$ is in degree $k+1$.  If $Sq^R x \notin M^k$ then $p\left( Sq^R x\right)$ is defined to be zero.  
Hence, for any $x \notin M^k$, 
\[ p\left( Sq^R x\right) = 0 = Sq^R (0) = Sq^R p(x). \]
So, $p$ is an $\CA(1)$-map. 

Let $i:N \to M$ be the inclusion.  The image of $i$ is contained in $M^k$, so for all $x \in N$,
\[ p \circ i (x) = \pi\big(i(x)\big)=x. \]
This exhibits a splitting of $N$ off of $M$ as a direct summand. 
\end{proof} 

\begin{defn}
For any reduced, bounded below, $Q_0$-local $\CA(1)$-module of finite type, $M$, and any $k \geq 0$, let \[V_{k}=\FF_{2}\{b_{\ell+1}, \ldots, b_{d}\},\] where the $b_{i}$ are chosen from $M^{k}$ as described in Definition~\ref{defnb} and Lemma~\ref{lemchoose_b}.  Also, let \[W_k=\im(Sq^2 Sq^1 Sq^2)_{k+1}\] 
in $M$. 
\end{defn}
For any nonzero class $Sq^2 Sq^1 Sq^2 x \in W_k$, the degree of $x$ is $k-4$, so $x \in M^{k-1}$ and $Sq^2 Sq^1 Sq^2 x \in M^{k-1}$. 
We also note that, by construction of the $b_i$'s, 
\[ \mathrm{dim}(V_k) = \mathrm{dim}\left(\left(M^k/M^{k-1} \right)_k\right) - \mathrm{dim}\left(\left(K/M^{k-1}\right)_k \right) \]
where $K$ is the kernel of the $Sq^1$ action.

\begin{lemma}\label{lemdim}
If $M$ is a reduced, bounded below, $Q_{0}$-local $\CA(1)$-module of finite type and there exists some $k$ such that $\dim(W_{k}) \neq \dim(V_{k})$, then $M$ has a direct summand $\Sigma^\alpha \Upsilon_{n}$ for some finite $n$ and some $\alpha$.
\end{lemma}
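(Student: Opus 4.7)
The plan is to apply the finite classification result, Proposition~\ref{propfiniteclass}, to $M^k$ in order to decompose it as a flock of seagulls; with the notation of Notation~\ref{notationdecomp}, one obtains a partition into lengthened ($L$), available ($A$), and unavailable ($U$) seagulls.  My aim is to show that the hypothesis $\dim(V_k) \neq \dim(W_k)$ forces $A$ to be nonempty, and then to note that any available seagull $N = \Sigma^\alpha \Upsilon_n$ has all of its classes concentrated in degrees at most $k+1$ (its top class sits in degree $k+1$), so that Lemma~\ref{lemsplit} will split $N$ off as a direct summand of $M$.

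The dimension comparison rests on two computations.  Corollary~\ref{cordecomp} directly yields $\dim(V_k) = d - \ell = |L|$.  For $\dim(W_k)$, I would first observe that since $k - 4 \leq k - 1$, every preimage in $M_{k-4}$ of an element of $W_k$ actually lies in $(M^{k-1})_{k-4}$.  Working inside the known flock decomposition of $M^{k-1}$ and using the relations~\eqref{eq1}--\eqref{eq4}, one checks routinely that $Sq^2 Sq^1 Sq^2$ annihilates every non-generator class of a seagull; so only generators in degree $k-4$ can contribute to $W_k$.  In $M^{k-1}$, any such generator must be the top generator of its seagull, since the next generator up would sit in degree $k$, outside $M^{k-1}$.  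Via the inductive construction in Lemma~\ref{leminductive}, these seagulls are exactly the ones of $M^k$ that are either lengthened or available; their images under $Sq^2 Sq^1 Sq^2$ are the distinct top classes of these summands of $M^k$ and hence linearly independent.  This yields $\dim(W_k) = |L| + |A|$, whence $\dim(W_k) - \dim(V_k) = |A|$; the hypothesis therefore forces $|A| \geq 1$.

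The step I expect to require the most care is the dimension count for $W_k$: the definition places $W_k$ inside $M$, but the relevant combinatorics takes place in $M^{k-1}$, and one must check that no unintended contributions appear.  The bridge is the equality $M_{k-4} = (M^{k-1})_{k-4}$ together with the fact that the $\CA(1)$-relations force $Sq^2 Sq^1 Sq^2$ to vanish on $Sq^2 y$, $Sq^1 Sq^2 y$, and $Sq^2 Sq^1 Sq^2 y$ for each generator $y$ of a seagull.  Once $|A| \geq 1$ has been established, selecting any available seagull $N = \Sigma^\alpha \Upsilon_n$ in the decomposition of $M^k$ gives a finite seagull (finite because its top generator lies in degree $k-4$) all of whose elements have degree at most $k+1$, so Lemma~\ref{lemsplit} realizes $N$ as a direct summand of $M$ and the lemma is proved.
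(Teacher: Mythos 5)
Your proposal is correct and follows essentially the same route as the paper's own proof: decompose $M^k$ into $L$, $A$, $U$ seagulls; use Corollary~\ref{cordecomp} to get $\dim(V_k)=d-\ell=|L|$; show $Sq^2 Sq^1 Sq^2$ restricts to a bijection from the span of degree-$(k-4)$ generators of $M^{k-1}$ onto $W_k$, giving $\dim(W_k)=|L|+|A|$; conclude $|A|\geq 1$ and split off an available seagull via Lemma~\ref{lemsplit}. The only additional content you supply is the explicit verification that $Sq^2 Sq^1 Sq^2$ vanishes on the non-generator basis elements of a seagull (which the paper compresses into the single assertion that the map onto $W_k$ is an isomorphism); that filler is accurate and harmless.
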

\begin{proof}
As in Notation~\ref{notationdecomp}, we take the decomposition of $M^k$, 
\[ M^{k} \cong  \left(\bigoplus_{\substack{i \in U}} \Sigma^{\alpha_i}\Upsilon_{n_i} \right) \oplus \left(\bigoplus_{\substack{i \in A}} \Sigma^{\alpha_i}\Upsilon_{n_i} \right)  \oplus  \left(\bigoplus_{i \in L} \Sigma^{\alpha_{i}}\Upsilon_{n_{i}} \right)  \]
recalling that $A \cup L$ indexes all seagulls in $M^k$ with a generator in degree $k-4$. In Corollary~\ref{cordecomp}, we showed $|L|=d-\ell$. Note that $d-\ell$ is also equal to the dimension of $V_k$.

Since $M^{k-1}$ is a flock of seagulls, the map
\[Sq^{2}Sq^{1}Sq^{2}: \FF_{2}\{y_{i, k-4} \in \big(M^{k-1}\big)_{k-4} \} \to \im(Sq^{2}Sq^{1}Sq^{2})_{k+1} =W_k\]
is an isomorphism of $\FF_2$-vector spaces. Thus,
\begin{align*} \mathrm{dim}\left( W_k \right)&= \mathrm{dim}\left(\FF_2\left\{ y_{i,k-4} \in \left(M^{k-1}\right)_{k-4}\right\} \right)=|L|+|A|  = (d-\ell)+|A| = \mathrm{dim}\left(V_k\right) + |A|. \end{align*}
So, if $\mathrm{dim}\left(W_k\right) \neq \mathrm{dim}\left(V_k\right)$, then $|A|>0$. Fixing some $r \in A$ gives a direct summand of $M^{k}$, $\Sigma^{\alpha_{r}}\Upsilon_{n_{r}}$, whose top class is in degree $k+1$. By Lemma~\ref{lemsplit}, this finite seagull splits off as a direct summand of $M$.
\end{proof}

\begin{lemma}
If $M$ is a reduced, bounded below, $Q_0$-local $\CA(1)$-module of finite type with no finite summands, then there exists a family of isomorphisms, 
\[ \Phi^k:\bigoplus_{i \in I_{j}} \Sigma^{\beta_i}\Upsilon_{m_i^j} \to M^j \]
satisfying the following conditions:
\begin{enumerate}[label=\roman*.)]
    \item \textbf{Inclusion:} Every seagull in $M^{j-1}$ corresponds to a seagull in $M^j$ and any new seagull in $M^j$ is a 1-seagull starting in degree $j$. I.e.  $I_{j-1} \subseteq I_{j}$, and for any $i \in I_{j} \setminus I_{j-1}$, $\beta_i=j$ and $m_i^j=1$,
    
    \item \textbf{Maximum length:} A seagull in $M^j$ corresponding to a $i^{\text{th}}$-seagull in $M^{j-1}$ has an additional 1-seagull attached to the top whenever allowed by the degree of the seagull's top class.  I.e., for all $i \in I_{j-1}$, $m_i^j = \begin{cases} m_i^{j-1}+1 & \beta_i \equiv j \pmod{4} \\
m_i^{j-1} & \text{otherwise,} \end{cases}$ and consequently $m_i^j=\left\lfloor \dfrac{j-\beta_i}{4} \right\rfloor$.
\item \textbf{Decomposition:} The isomorphisms respect the direct sums. 
I.e., for each $j\geq 1$, the following diagram commutes. 
\[\begin{tikzcd}   
\bigoplus\limits_{i \in I_{j-1}}  \Sigma^{\beta_i} \Upsilon_{m_i^{j-1}} \ar[d, "\Phi^{j-1}"] \ar[r, "f^j"] & \bigoplus\limits_{i\in I_{j} } \Sigma^{\beta_i} \Upsilon_{m_i^j} \ar[d, "\Phi^j"] \\
M^{j-1} \ar[r, "\iota^j"] & M^j
\end{tikzcd} \]
where $f^j$ and $\iota^j$ are the obvious inclusions. In particular, $f^j$ maps $\Sigma^{\beta_i}\Upsilon_{m_i^{j-1}}$ into $\Sigma^{\beta_i}\Upsilon_{m_i^j}$.
\end{enumerate}
\end{lemma}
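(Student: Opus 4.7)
The plan is to proceed by induction on $j$, constructing each $\Phi^j$ by extending $\Phi^{j-1}$ via an explicit choice of new generators in degree $j$. After possibly shifting $M$ so that $M_0 \neq 0$, the base case $j=0$ follows from Lemma~\ref{lembasecase}: Lemma~\ref{lemsq1=0} places every element of $M_0$ in $\ker(Sq^1)$, producing $\Phi^0 \colon \bigoplus_{i \in I_0} \Sigma^0 \Upsilon_1 \to M^0$ with $\beta_i=0$ and $m_i^0=1$, and conditions (i)--(iii) are vacuous.

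For the inductive step, write $g_{i, \beta_i + 4s} := \Phi^{j-1}(y_{i, \beta_i+4s})$ for the images of the generators of the existing seagulls. I would assemble $\Phi^j$ from two types of degree-$j$ generators. First, for each $i \in I_{j-1}$ with $\beta_i \equiv j \pmod{4}$ (so that the $i$-th seagull is available in the sense of Definition~\ref{defnLAU}), select $g_{i,j} \in M_j$ satisfying $Sq^1 g_{i, j} = Sq^2 Sq^1 Sq^2 g_{i, j-4}$; existence follows from Lemma~\ref{lemdim} together with the no-finite-summands hypothesis, which together force $|A|=0$ at step $j$ and so every available seagull must lengthen in $M^j$. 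Second, for each new index $i \in I_j \setminus I_{j-1}$, set $\beta_i := j$, $m_i^j := 1$, and pick $g_{i,j} \in K \cap M_j$ so that their classes form a basis of $\bigl(K/(K \cap M^{j-1})\bigr)_j$. For the remaining $i \in I_{j-1}$, set $m_i^j := m_i^{j-1}$. Then define $\Phi^j$ on generators by $y_{i, \beta_i + 4s} \mapsto g_{i, \beta_i+4s}$. By these inductive choices $\Phi^j$ directly extends $\Phi^{j-1}$, so conditions (i)--(iii) are built in; and a five-lemma argument applied to the diagram with rows
\[0 \to \bigoplus_{i \in I_{j-1}} \Sigma^{\beta_i}\Upsilon_{m_i^{j-1}} \to \bigoplus_{i \in I_j} \Sigma^{\beta_i}\Upsilon_{m_i^j} \to \bigoplus \Sigma^j \Upsilon_1 \to 0\]
and
\[0 \to M^{j-1} \to M^j \to M^j/M^{j-1} \to 0,\]
with vertical maps $\Phi^{j-1}$, $\Phi^j$, and the induced quotient (using Lemma~\ref{lemquotient} for the right-hand term), gives that $\Phi^j$ is an isomorphism.

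The step I expect to be the main obstacle is the per-seagull compatibility in condition (iii): the extension procedure in Lemma~\ref{leminductive} fixes a single $p_\lambda$ per new generator $b_\lambda$ and lumps the remaining indices in $P$ into the lengthened seagull, which breaks the requirement that each existing seagull individually maps to its own extension. To overcome this, I would consider the matrix $(c_{\lambda, p})$ of coefficients determined by $Sq^1 b_\lambda = \sum_{p} c_{\lambda, p}\, Sq^2 Sq^1 Sq^2 g_{p, j-4}$, where $\{b_{\ell+1}, \ldots, b_d\}$ is the basis from Definition~\ref{defnb} and Lemma~\ref{lemchoose_b}. This matrix is square of size $|A_{j-1}| \times |A_{j-1}|$ by Lemma~\ref{lemdim} (with $|A|=0$ owing to no finite summands, so $d-\ell = |A_{j-1}|$), and injective because $b_\lambda \mapsto Sq^1 b_\lambda$ has trivial kernel: any combination $\sum c_\lambda b_\lambda \in K$ would lie in $(M^{j-1}+K)_j$, contradicting the linear independence of $\{b_{\ell+1}, \ldots, b_d\}$ modulo $(M^{j-1}+K)_j$ guaranteed by Definition~\ref{defnb}. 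Inverting this matrix yields replacements $\wt b_p$ satisfying $Sq^1 \wt b_p = Sq^2 Sq^1 Sq^2 g_{p, j-4}$ individually; these serve as the desired $g_{p, j}$, ensuring each extension lengthens exactly the corresponding seagull and making the inductive choice honor condition (iii).
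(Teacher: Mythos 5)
Your proposal is correct and takes essentially the same approach as the paper: the base case via Lemma~\ref{lembasecase}, the use of Lemma~\ref{lemdim} together with the no-finite-summands hypothesis to force $|A|=0$, the observation that the remaining unavailable seagulls in the right residue class must be degree-$j$ 1-seagulls, and a five-lemma argument against the quotient $M^j/M^{j-1}$. Your matrix-inversion step is the same content as the paper's identification $U_k = W_k$ and its verification that the induced quotient map $\Lambda$ is an isomorphism; you simply change coordinates up front (inverting $(c_{\lambda,p})$ to produce $\wt{b}_p$ with $Sq^1\wt{b}_p = Sq^2 Sq^1 Sq^2 g_{p,j-4}$) whereas the paper carries the coefficients $c_i^q$ through the definition of $\Phi^k$ and checks invertibility at the end.
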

\begin{figure}[h]
    \centering
    \caption{An example of a family of seagulls indexed by sets satisfying inclusion and maximum length. }
    \label{fig:Ij}
    \hfill \begin{tikzpicture}[scale=0.3]
     \draw[gray, dashed] (-1,10)--(9,10);
     \node[left] at (14,10){\text{deg. }$j$};
     \SgL(0,0);
     \SgR(0,4);
     \sqone(0,4,);
     \SgL(0,8);
     \sqone(0,8,);
     \SgL(2,2);
     \SgR(2,6);
     \sqone(2,6,);
     \SgL(4,3);
     \SgR(4,7);
     \sqone(4,7,);
     \SgL(6,6);
     \SgL(8,9);
     \node[below] at (4,0){ $\bigoplus\limits_{i \in I_{j-1}} \Sigma^{\beta_i} \Upsilon_{m_i^{j-1}}$};
    \end{tikzpicture} \hfill  \begin{tikzpicture}[scale=0.3]
     \draw[gray, dashed] (-1,10)--(11,10);
     \node[left] at (16,10){\text{deg. }$j$};
     \SgL(0,0);
     \SgR(0,4);
     \sqone(0,4,);
     \SgL(0,8);
     \sqone(0,8,);
     \SgL(2,2);
     \SgR(2,6);
     \sqone(2,6,);
     \SgL(2,10);
     \sqone(2,10,);
     \SgL(4,3);
     \SgR(4,7);
     \sqone(4,7,);
     \SgL(6,6);
     \SgR(6,10);
     \sqone(6,10,);
     \SgL(8,9);
     \SgL(10,10);
     \node[below] at (4,0){ $\bigoplus\limits_{i \in I_{j}} \Sigma^{\beta_i} \Upsilon_{m_i^{j}}$};
    \end{tikzpicture} \hfill \phantom{.} 
\end{figure}
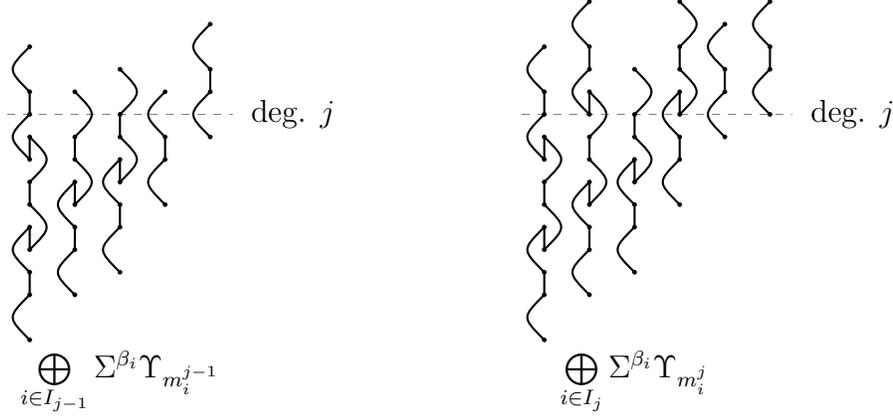
\begin{proof}
We assume that $M_k=0$ for all $k<0$, after a possible shift.  For all $k$, $M^k$ is finite and thus isomorphic to a flock of seagulls (Proposition~\ref{propfiniteclass}).   In the case where $k=0$, we fix any isomorphism \[\Phi^0: \bigoplus\limits_{i \in I_0} \Sigma^{0}\Upsilon_{1} \to M^0.\]
Now suppose that for each $j \leq k-1$, there is a flock of seagulls, $\bigoplus\limits_{i \in I_j} \Sigma^{\beta_i}\Upsilon_{m_i^j}$, mapping into $M^j$ via a fixed isomorphism $\Phi^j$ such that $\{\Phi^j\}_{j \leq k-1}$ satisfies the three desired conditions. We will then produce an isomorphism $\Phi^k$ so that the family $\{\Phi^j\}_{j \leq k}$ satisfies those conditions as well.

We have the decomposition of $M^k$ into unavailable, available, and lengthened seagulls,
\[ M^{k} \cong  \left(\bigoplus_{\substack{i \in U}} \Sigma^{\alpha_i}\Upsilon_{n_i} \right) \oplus \left(\bigoplus_{\substack{i \in A}} \Sigma^{\alpha_i}\Upsilon_{n_i} \right)  \oplus  \left(\bigoplus_{i \in L} \Sigma^{\alpha_{i}}\Upsilon_{n_{i}} \right),  \]
where $L \cup A$ indexes all seagulls in $M^{k}$ with a generator in degree $k-4$. \FloatBarrier
In Lemma~\ref{lemdim}, we showed that in the case where $M$ has no finite summands,\\ $\mathrm{dim}\left(W_k\right)=\mathrm{dim}\left(V_k \right)$ and so $|A|=0$. So, in this case, 
\begin{align*} M^k &\cong \left(\bigoplus_{\substack{i \in U}} \Sigma^{\alpha_i}\Upsilon_{n_i} \right)  \oplus  \left(\bigoplus_{i \in L} \Sigma^{\alpha_{i}}\Upsilon_{n_{i}+1} \right). %\\
%&\cong \left(\bigoplus_{\substack{i \in U \\ \alpha_i \equiv k \ (4)} } \Sigma^{\alpha_i}\Upsilon_{n_i} \right) \oplus  \left(\bigoplus_{\substack{i \in U \\ \alpha_i\not\equiv  k \ (4) }} \Sigma^{\alpha_i}\Upsilon_{n_i} \right)   \oplus  \left(\bigoplus_{i \in L} \Sigma^{\alpha_{i}}\Upsilon_{n_{i}} \right).\\
%&\cong \left(\bigoplus_{\substack{1 \leq j \leq d  }} \Sigma^{k}\Upsilon_{1}\{b_j\} \right) \oplus  \left(\bigoplus_{\substack{i \in U \\ \beta_i\neq k }} \Sigma^{\beta_i}\Upsilon_{m_i} \right)   \oplus  \left(\bigoplus_{i \in L} \Sigma^{\beta_{i}}\Upsilon_{m_{i}+1} \right).
\end{align*}

\begin{figure}[ht!]
    \centering
    \caption{An example of $M^k$ when $M$ has no finite summands.}
    \label{fig:infinite}
    \begin{tikzpicture}[scale=0.4]
     \draw[gray, dashed] (-9,10)--(14,10);
     \node[left] at (17,10){\text{deg. }$k$};
        \SgL(11,0);
        \SgR(11,4);
        \sqone(11,4,);
        \SgL(11,8);
        \sqone(11,8,);
        \SgR(13,7);
        \SgL(9,9);
        \SgR(0,10);
        \SgR(2,10);
        \SgL(-7,10);
        \SgR(-7,6);
        \sqone(-7,6,);
        \SgL(-7,2);
        \sqone(-7,10,);
        \SgR(-5,10);
        \SgL(-5,6);
        \sqone(-5,10,);
        \draw[gray](-9,3)--(-9,1)--(-4,1)--(-4,7);
        \draw[gray](-6.5,1)--(-6.5,0);
         \node[below] at (-6.5,0){\Large $\bigoplus\limits_{i \in L} \Sigma^{\alpha_i}\Upsilon_{n_i}$};
        \node[below] at (-6.5,-3){Lengthened};
        \draw[gray](-1,11)--(-1,9)--(4,9)--(4,11);
        \draw[gray](1.5,9)--(1.5,8);
        \node[below] at (1.5, 8){\Large $\bigoplus\limits_{\substack{i \in U \\ \alpha_i = k}} \Sigma^{\alpha_i}\Upsilon_{n_i}$};
        \node[below] at (1.5,4){Unavailable with a};
        \node[below] at (1.5,3){generator in degree $k$};
        \draw[gray](7,9.75)--(7,-1)--(15,-1)--(15,8);
        \draw[gray](10,-1)--(10,-2);
        \node[below] at (10,-2){\Large $\bigoplus\limits_{\substack{i \in U \\ \alpha_i \neq k}} \Sigma^{\alpha_i}\Upsilon_{n_i}$};
        \node[below] at (10,-6){Unavailable, no generator in degree $k$};
    \end{tikzpicture}
\end{figure}
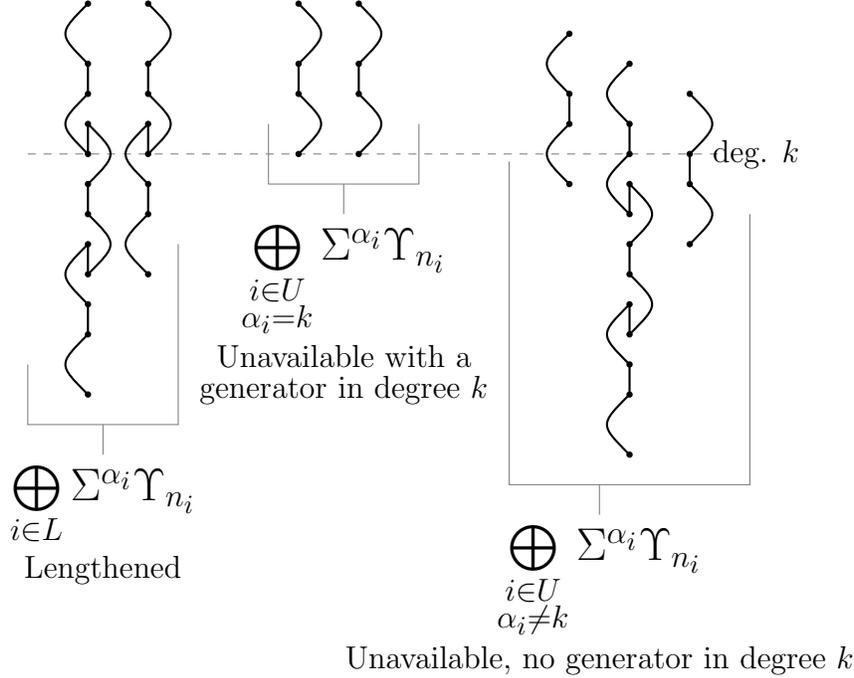
\FloatBarrier
Suppose $i \in U$ with $\alpha_i \equiv k \pmod{4}$.
   By Lemma~\ref{lemsplit}, the top generator of $\Sigma^{\alpha_i}\Upsilon_{n_i}$ must be in degree at least $k-3$. Otherwise, this finite seagull would split off as a summand of $M$, which we have assumed has no finite summands.   The degree of the top generator is also no more than $k$ and equivalent to $k$ modulo 4, so the top generator is in degree $k$. Since this seagull is not lengthened, the generator in degree $k$ is the only generator.  Thus, $\alpha_i=k$ and $n_i=1$.
Thus, 
\begin{align*} M^k 
&\cong \left(\bigoplus_{\substack{i \in U \\ \alpha_i = k }} \Sigma^{\alpha_i}\Upsilon_{1} \right) \oplus  \left(\bigoplus_{\substack{i \in U \\ \alpha_i\not\equiv  k \ (4) }} \Sigma^{\alpha_i}\Upsilon_{n_i} \right)   \oplus  \left(\bigoplus_{i \in L} \Sigma^{\alpha_{i}}\Upsilon_{n_{i}} \right).
\end{align*} 
For any $i \in U$ with $\alpha_i \not\equiv k \pmod{4}$, $\Sigma^{\alpha_i}\Upsilon_{n_i}$ cannot have a generator in degree $k$.  (All generators of a seagull have equivalent degrees modulo 4.)  Since $\Sigma^{\alpha_i}\Upsilon_{n_i} \subseteq M^k$, this means all generators of this seagull have degree no more than $k-1$.
So,
\[ M^k/M^{k-1} \cong  \left(\bigoplus_{\substack{i \in U \\ \alpha_i=k }} \Sigma^{k}\Upsilon_{1} \right) \oplus \left(\bigoplus_{i \in L} \Sigma^{k}\Upsilon_{1} \right). \]
We wish to construct
\[ \bigoplus_{i \in I_k} \Sigma^{\beta_i} \Upsilon_{m_i^k} \]
which will be isomorphic to $M^k$.  To do so, we will lengthen some seagulls in 
\[ \bigoplus_{i \in I_{k-1}} \Sigma^{\beta_i} \Upsilon_{m_i^{k-1}} \]
and introduce some new 1-seagulls. 
Let $I_k= I_{k-1}\sqcup \{ i \in U| \alpha_i= k \}$, setting $\beta_{i}=k$ for all $i \in U$ such that $\alpha_i=k$, and leaving $\beta_i$ unchanged for all $i \in I_{k-1}$.  Then, for all $i \in I_k$, let
\begin{align*} m_i^k &= \begin{cases} m_i^{k-1} & i \in I_{k-1}, \ \beta_i \not\equiv  k \pmod{4}\\ m_i^{k-1}+1 & i \in I_{k-1}, \ \beta_i \equiv k \pmod{4} \\ 
1 & i \in \{q \in U| \alpha_q =k\}. 
\end{cases}   \end{align*}
Then the inclusion and maximum length properties are satisfied by $\{I_j\}_{j \leq k}$.  Consider the decomposition
\begin{align} \label{eqdecomp} \bigoplus_{i \in I_k} \Sigma^{\beta_i}\Upsilon_{m_i^k} \cong \left(\bigoplus_{\substack{i \in I_{k-1}\\ \beta_i\not\equiv k \ (4)}} \Sigma^{\beta_i} \Upsilon_{m_i^{k-1}}\right) \oplus \left(\bigoplus_{\substack{i \in I_{k-1}\\ \beta_i\equiv k \ (4) }} \Sigma^{\beta_i} \Upsilon_{m_i^{k-1}+1}\right) \oplus \left(\bigoplus_{\substack{i \in I_{k}\setminus I_{k-1} }} \Sigma^{k} \Upsilon_{1}\right).  \end{align}
We will construct a map, \[ \Phi^k: \bigoplus_{i \in I_k} \Sigma^{\beta_i}\Upsilon_{m_i^k} \to M^k, \] by defining $\Phi^k$
on each summand of \eqref{eqdecomp}.

The leftmost summand is contained in $\bigoplus\limits_{i \in I_{k-1}} \Sigma^{\beta_i}\Upsilon_{m_i^{k-1}}$, so we define $\Phi^k$ on this summand to be the composition,
\[ \bigoplus_{\substack{i \in I_{k-1}\\ \beta_i \not\equiv k \ (4) }} \Sigma^{\beta_i} \Upsilon_{m_i^{k-1}} \xrightarrow{\Phi^{k-1}} M^{k-1} \xrightarrow{\iota^k} M^k. \]
By the definition of $I_k$, we have
\[ \bigoplus_{i \in I_k \setminus I_{k-1}} \Sigma^k \Upsilon_1 = \bigoplus_{\substack{i \in U\\ \alpha_i=k}} \Sigma^k \Upsilon_1 \subseteq M^k, \]
so we define $\Phi^k$ on the rightmost summand of \eqref{eqdecomp} to be the inclusion into $M^k$.
Defining the map on the center summand is more complicated.  Recall from Lemma~\ref{lemchoose_b} that
\[ Sq^1 b_i \in Sq^2 Sq^1 Sq^2 \left( \left(M^{k-1}\right)_{k-4} \right)=W_k \]
for all $\ell+1 \leq i \leq d$.
So, $U_k:=\FF_2 \left\{ Sq^1 b_i | \ell+1 \leq i \leq d \right\} \subseteq W_k$.  However,
\[ Sq^1: V_k \to U_k \]
is a vector space isomorphism, since no combination of the $b_i$'s are in the kernel of $Sq^1$.  
By Lemma~\ref{lemdim}, $\mathrm{dim}\left(U_k\right) = \mathrm{dim}\left(V_k\right) = \mathrm{dim}\left(W_k \right)$. So, $U_k=W_k$.  

Let $\Sigma^{\beta_i}\Upsilon_{m_i^k}=\Sigma^{\beta_i}\Upsilon_{m_i^{k-1}+1}$ in the center summand of~\eqref{eqdecomp} have generators 
\[ z_{i, \beta_i}, z_{i, \beta_i+4}, \ldots, z_{i, k}. \]
For any $z_{i,j}$ with $j<k$, $z_{i,j}$ is in $M^{k-1}$, so $\Phi^{k-1}(z_{i,j})$ is defined. 
When $j=k-4$, \[\Phi^{k-1}\left(Sq^2 Sq^1 Sq^2 z_{i,k-4}\right) \in Sq^2 Sq^1 Sq^2 \left( \left(M^{k-1}\right)_{k-4} \right)=W_k=U_k.\]
So, $\Phi^{k-1}\left(Sq^2 Sq^1 Sq^2 z_{i, k-4}\right)=\sum\limits_{\ell+1 \leq q \leq d} Sq^1 c_i^q b_q$, where $c_i^q \in \FF_2$ for all $i$ and $q$.  Define $\Phi^k$ on this summand to be the $\CA(1)$-map determined by
\[ \Phi^k(z_{i,j}) = \begin{cases} \iota^k \circ \Phi^{k-1}(z_{i,j}) & j<k \\
\sum\limits_{\ell+1 \leq q \leq d} c_i^q b_q & j=k.\end{cases} \]
To see such a map is well defined, we need to check 
\[ \Phi^k\left(Sq^2 Sq^1 Sq^2 z_{i,k-4}\right) = \Phi^k\left(Sq^1 z_{i,k-4}\right) \]
for all $i$, but $\Phi^k$ was constructed so that this relation holds. Furthermore, the diagram
\[\begin{tikzcd}   
\bigoplus\limits_{i \in I_{k-1}}  \Sigma^{\beta_i} \Upsilon_{m_i^{k-1}} \ar[d, "\Phi^{k-1}"] \ar[r, "f^k"] & \bigoplus\limits_{i\in I_{k} } \Sigma^{\beta_i} \Upsilon_{m_i^k} \ar[d, "\Phi^k"] \\
M^{k-1} \ar[r, "\iota^k"] & M^k
\end{tikzcd} \]
commutes by the construction of $\Phi^k$, so the family $\left\{ \Phi^j \right\}_{j \leq k}$ satisfies the decomposability property.

To show $\Phi^{k}$ is a isomorphism, we will apply the five lemma to the diagram,
\[
\begin{tikzcd}
0 \ar[r] & \bigoplus\limits_{i \in I_{k-1}} \Sigma^{\beta_i} \Upsilon_{m_i^{k-1}} \ar[r, "f^k"] \ar[d, "\Phi^{k-1}"]  & \bigoplus\limits_{i \in I_{k}} \Sigma^{\beta_i} \Upsilon_{m_i^{k}} \ar[r, "\hat{q}"] \ar[d, "\Phi^k"] &  \bigoplus\limits_{\mathclap{\substack{i \in I_{k-1}\\ \beta_i \equiv k \ (4)}}}\Sigma^{k}\Upsilon_1 \{[z_{i, k}]\} \oplus \bigoplus\limits_{\mathclap{\substack{i \in U\\ \alpha_i=k}}} \Sigma^k \Upsilon_1 \ar[r] \ar[d, "\Lambda"] & 0 \\
0 \ar[r] & M^{k-1} \ar[r, "\iota^k"] & M^{k}\ar[r, "q"] &  \bigoplus\limits_{i \in L}\Sigma^{k}\Upsilon_1 \oplus \bigoplus\limits_{\mathclap{\substack{i \in U\\ \alpha_i=k}}} \Sigma^k \Upsilon_1 \ar[r]  & 0 \\
\end{tikzcd}
\]
where $\Lambda$ takes $[z_{i, k}]$ to $\sum\limits_{\ell+1 \leq q \leq d} c_i^q [b_q]$ and is the identity on the right summand.  We have already seen that the left square commutes by the definition of $\Phi^k$.  To see that the right square commutes, we consider the decomposition given in \eqref{eqdecomp}.   On the summand contained in $\bigoplus\limits_{i \in I_{k-1}} \Sigma^{\beta_i}\Upsilon_{m_i^{k-1}}$,
\[ q \circ \Phi^k = 0 = \Lambda \circ \hat{q}. \]
On the summand isomorphic to $\bigoplus\limits_{\mathclap{\substack{i \in U\\ \alpha_i=k}}} \Sigma^{k} \Upsilon_{1}$, both compositions are the identity. On the remaining summand,
\begin{align*}
    q \circ \Phi^k (z_{i,j}) &= \begin{cases} \left[\sum\limits_{\ell+1 \leq q \leq d} c_i^q b_q \right] & j = k \\ 0 & \text{otherwise} \end{cases} \\
    &= \begin{cases} \Lambda([z_{i,k}]) & j = k \\ \Lambda(0) & \text{otherwise} \end{cases} \\
    &= \Lambda \circ \hat{q} (z_{i,j}).
\end{align*}

Finally, we show $\Lambda$ is an isomorphism, completing the proof. Certainly, the restriction of $\Lambda$ to the summand $\displaystyle\bigoplus_{\substack{i \in U \\ \alpha_i \equiv k \ (4)}} \Sigma^k \Upsilon_1$ is an isomorphism.    By the definition of $\Lambda$, the image of this summand under $\Lambda$ is contained in $\displaystyle\bigoplus_{i \in L} \Sigma^k \Upsilon_1$.  To show the restriction of  $\Lambda$ to the summand $\displaystyle \bigoplus_{\substack{i \in I_{k-1} \\ \beta_i \equiv k \ (4)}} \Sigma^k \Upsilon_1\{[z_{i,k}]\}$ is an isomorphism, we show \[\Lambda_k: \left( \bigoplus_{\substack{i \in I_{k-1} \\ \beta_i \equiv k \ (4)}} \Sigma^k \Upsilon_1\{[z_{i,k}]\} \right)_k \to \left( \bigoplus_{i \in L} \Sigma^k \Upsilon_1\right)_k\] is an isomorphism of $\FF_2$-vector spaces.

By construction, $\{[b_{\ell+1}], \ldots, [b_d]\}$ is a basis for the $\FF_2$-vector space, $\displaystyle\left( \bigoplus_{i \in L} \Sigma^k \Upsilon_1 \right)_k$.  So, this vector space has dimension $d-\ell$.  The $\FF_2$-vector space, \[\displaystyle \left(\bigoplus_{\substack{i \in I_{k-1}\\ \beta_i \equiv k \ (4)}} \Sigma^k \Upsilon_1 \{[z_{i,k}]\}\right)_k\] is isomorphic (via the map given by the action of $Sq^1$) to 
\begin{align*}Sq^2 Sq^1 Sq^2 \left( \left(\bigoplus_{i \in I_{k}} \Sigma^{\beta_i}\Upsilon_{m_i^{k}} \right)_{k-4} \right) &\cong Sq^2 Sq^1 Sq^2 \left( \left(\bigoplus_{i \in I_{k-1}} \Sigma^{\beta_i}\Upsilon_{m_i^{k-1}} \right)_{k-4} \right)\\
&\cong Sq^2 Sq^1 Sq^2 \left(\left(M^{k-1}\right)_{k-4}\right)\\
&=W_k \end{align*}
The dimension of $W_k$ is also $d-\ell$.

Finally, we show $\Lambda_k$ is injective.  Take some element of the kernel of $\Lambda_k$,  $\sum\limits_{\substack{i \in I_{k-1}\\ \beta_i \equiv k \ (4)}} a_i [z_{i,k}]$  where each $a_i$ is in $\FF_2$.  Then, 
\begin{align*}
    0 &= \Lambda_k \left(\sum_{\substack{i \in I_{k-1}\\ \beta_i \equiv k \ (4)}} a_i [z_{i,k}] \right) = \sum_{\substack{i \in I_{k-1}\\ \beta_i \equiv k \ (4)}} a_i \left(\sum_{\ell+1 \leq q \leq d} c_i^q [b_q] \right).\\
\end{align*}
Since $Sq^1: \left(\bigoplus_{i \in L} \Sigma^k \Upsilon_1 \right)_k \to \left(\bigoplus_{i \in L} \Sigma^k \Upsilon_1 \right)_{k+1}$ is an isomorphism, 
\begin{align*}
    0 &= \sum_{\substack{i \in I_{k-1}\\ \beta_i \equiv k \ (4)}} a_i \left(\sum_{\ell+1 \leq q \leq d} Sq^1 c_i^q b_q\right) \\
    &= \sum_{\substack{i \in I_{k-1}\\ \beta_i \equiv k \ (4)}} a_i \Phi^{k-1} \left(Sq^2 Sq^1 Sq^2 z_{i,k-4} \right) \\
    &= \Phi^{k-1}\left(Sq^2 Sq^1 Sq^2 \sum_{\substack{i \in I_{k-1} \\ \beta_i \equiv k \ (4)}} a_i z_{i, k-4} \right).
\end{align*}
The maps $\Phi^{k-1}$ and \[\displaystyle Sq^2 Sq^1 Sq^2: \left( \bigoplus_{\substack{i \in I_{k-1}\\ \beta_i \equiv k \ (4)}} \Sigma^{\beta_i} \Upsilon_{m_i^k} \right)_{k-4} \to \left( \bigoplus_{\substack{i \in I_{k-1}\\ \beta_i \equiv k \ (4)}} \Sigma^{\beta_i} \Upsilon_{m_i^k} \right)_{k-4} \] are isomorphisms, so we can conclude
\[ 0= \sum_{\substack{i \in I_{k-1} \\ \beta_i \equiv k \ (4)}} a_i z_{i, k-4}. \qedhere \]

\end{proof}

We are now ready to prove the following lemma, which completes the proof of Theorem~\ref{thmclassification}. 
\begin{lemma}
If $M$ is a reduced, bounded below, $Q_0$-local $\CA(1)$-module of finite type with no finite summands, $M$ is isomorphic to a sum of suspensions of $\Upsilon_\infty$.
\end{lemma}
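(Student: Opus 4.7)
The plan is to assemble the family of isomorphisms $\Phi^{k}: \bigoplus_{i\in I_{k}}\Sigma^{\beta_{i}}\Upsilon_{m_{i}^{k}}\to M^{k}$ supplied by the previous lemma into a single isomorphism in the colimit. Set $I_{\infty}=\bigcup_{k\geq 0} I_{k}$. By the inclusion property, once an index $i$ appears in some $I_{k_{i}}$ its grading $\beta_{i}$ never changes, so each $i\in I_{\infty}$ carries a well-defined integer $\beta_{i}$ (namely $\beta_{i}=k_{i}$, the smallest $k$ with $i\in I_{k}$). Since $M$ is bounded below and of finite type, only finitely many indices can first appear in any given $I_{k}\setminus I_{k-1}$, so $I_{\infty}$ is countable and the direct sum $\bigoplus_{i\in I_{\infty}}\Sigma^{\beta_{i}}\Upsilon_{\infty}$ is a legitimate $\CA(1)$-module of finite type.

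The maximum length property gives $m_{i}^{k}=\lfloor(k-\beta_{i})/4\rfloor$ for all $k\geq\beta_{i}$, so the natural inclusion $\Sigma^{\beta_{i}}\Upsilon_{m_{i}^{k-1}}\hookrightarrow\Sigma^{\beta_{i}}\Upsilon_{m_{i}^{k}}$ exhibits $\Sigma^{\beta_{i}}\Upsilon_{\infty}$ as the colimit $\varinjlim_{k}\Sigma^{\beta_{i}}\Upsilon_{m_{i}^{k}}$. Summing over $i\in I_{\infty}$ and invoking the decomposition property, the squares
\[
\begin{tikzcd}
\bigoplus\limits_{i\in I_{k-1}}\Sigma^{\beta_{i}}\Upsilon_{m_{i}^{k-1}}\ar[d,"\Phi^{k-1}"]\ar[r,"f^{k}"]&\bigoplus\limits_{i\in I_{k}}\Sigma^{\beta_{i}}\Upsilon_{m_{i}^{k}}\ar[d,"\Phi^{k}"]\\
M^{k-1}\ar[r,"\iota^{k}"]&M^{k}
\end{tikzcd}
\]
commute for every $k$, so the $\Phi^{k}$ assemble into a map
\[
\Phi:\bigoplus_{i\in I_{\infty}}\Sigma^{\beta_{i}}\Upsilon_{\infty}\longrightarrow \varinjlim_{k} M^{k}=M.
\]
Concretely, for a generator $y_{i,\beta_{i}+4j}$ of $\Sigma^{\beta_{i}}\Upsilon_{\infty}$, pick any $k\geq\beta_{i}+4j$ with $i\in I_{k}$ and set $\Phi(y_{i,\beta_{i}+4j})=\Phi^{k}(y_{i,\beta_{i}+4j})$; independence of the choice of $k$ is exactly the content of the decomposition property.

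It remains to check $\Phi$ is bijective. For surjectivity, any $x\in M$ lies in $M^{k}$ for some $k$, and $\Phi^{k}$ is an isomorphism onto $M^{k}$, so $x\in\operatorname{im}\Phi$. For injectivity, suppose $y\in\bigoplus_{i\in I_{\infty}}\Sigma^{\beta_{i}}\Upsilon_{\infty}$ is nonzero; then $y$ has some degree $d$ and only finitely many nonzero components, say indexed by $J\subset I_{\infty}$. Choose $k\geq d$ large enough that $J\subset I_{k}$ and $d\leq k$ (which forces each relevant generator to already lie in $\Sigma^{\beta_{i}}\Upsilon_{m_{i}^{k}}$). Then $y$ lies in the subobject $\bigoplus_{i\in I_{k}}\Sigma^{\beta_{i}}\Upsilon_{m_{i}^{k}}$ and $\Phi(y)=\Phi^{k}(y)$, which is nonzero because $\Phi^{k}$ is an isomorphism. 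The main obstacle here is purely bookkeeping: verifying that the colimit direct sum is well-defined (using the bounded-below/finite-type hypothesis to control the cardinality and the growth of the $m_{i}^{k}$) and that degreewise everything stabilizes as $k\to\infty$; once the decomposition property is in hand, naturality does the rest.
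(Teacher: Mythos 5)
Your proof is correct and takes essentially the same approach as the paper: both pass to the colimit of the tower of finite truncations $M^k$, use the three properties (inclusion, maximum length, decomposition) of the family $\{\Phi^k\}$ to see that the left-hand colimit decomposes as a direct sum of colimits $\varinjlim_k \Sigma^{\beta_i}\Upsilon_{m_i^k} \cong \Sigma^{\beta_i}\Upsilon_\infty$, and conclude $M \cong \bigoplus_{i \in \cup I_k}\Sigma^{\beta_i}\Upsilon_\infty$. The only stylistic difference is that the paper invokes the abstract facts that colimits preserve isomorphisms and commute with direct sums, while you construct the limiting map $\Phi$ explicitly and verify bijectivity by hand (a minor technical point: in the injectivity check you should pick $k$ a bit larger than $d$, say $k \geq d+3$, to guarantee all generators of degree $\leq d$ already appear in $\Sigma^{\beta_i}\Upsilon_{m_i^k}$, but this is easily fixed and does not affect the argument).
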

\begin{proof}
By the previous lemma, we can assume we have the following diagram for all $j \geq 1$,
\[\begin{tikzcd}   
\cdots \ar[r] &\bigoplus\limits_{i \in I_{j-1}}  \Sigma^{\beta_i} \Upsilon_{m_i^{j-1}} \ar[d, "\Phi^{j-1}"] \ar[r, "f^j"] & \bigoplus\limits_{i\in I_{j} } \Sigma^{\beta_i} \Upsilon_{m_i^j} \ar[d, "\Phi^j"] \ar[r] & \cdots \\
\cdots \ar[r] & M^{j-1} \ar[r, "\iota^j"] & M^j \ar[r] & \cdots
\end{tikzcd} \]
where $\Phi^j$ is an isomorphism for all $j$.
Then, 
\[ M \cong \varinjlim_{j} M^j \cong \varinjlim_{j} \left(\bigoplus\limits_{i\in I_{j} } \Sigma^{\beta_i} \Upsilon_{m_i^j}\right)  \]
along the maps in the diagram.  For ease of notation, we define $\Sigma^{\beta_i}\Upsilon_{m_i^j}$ to be zero for all $i \notin I_j$.  Since $f^j$ maps $\Sigma^{\beta_i}\Upsilon_{m_i^{j-1}}$ into $\Sigma^{\beta_i} \Upsilon_{m_i^j}$, 
\[\varinjlim_{j} \left(\bigoplus\limits_{i\in {I_{j}} } \Sigma^{\beta_i} \Upsilon_{m_i^j}\right) \cong \varinjlim_{j} \left(\bigoplus\limits_{ i \in \cup I_j } \Sigma^{\beta_i} \Upsilon_{m_i^j}\right) \cong \bigoplus\limits_{i \in {\cup I_j}}\left( \varinjlim_{j} \Sigma^{\beta_j}\Upsilon_{m_i^j}\right). \] 

Since our family of isomorphisms has the maximum length property,
\[ \Sigma^{\beta_i} \Upsilon_{m_i^j} = \Sigma^{\beta_i}\Upsilon_{p(i,j)} \]
where
\[ p(i,j) := \left\lfloor \frac{j-\beta_i}{4} \right\rfloor
\]
for all $j \geq \beta_i$. So, 
\[ \varinjlim_{j} \Sigma^{\beta_i}\Upsilon_{m_i^j} \cong \varinjlim_{j} \Sigma^{\beta_i}\Upsilon_{p(i,j)} \cong \varinjlim_{n} \Sigma^{\beta_i}\Upsilon_{n}  \]
where the maps in the last colimit are the inclusions $\Sigma^{\beta_i}\Upsilon_n \to \Sigma^{\beta_i}\Upsilon_{n+1}$.
Then we have 
\[ M \cong \bigoplus_{i \in \cup I_j}\left( \varinjlim_n \Sigma^{\beta_i}\Upsilon_n\right) = \bigoplus_{i \in \cup I_j} \Sigma^{\beta_i} \Upsilon_\infty.  \qedhere \]
\end{proof}

So, under appropriate finiteness conditions, every $Q_0$-local $\CA(1)$-module is stably equivalent to a flock of seagulls.  A quick consequence of this is that the $Q_0$-homology of a finite, $Q_0$-local $\CA(1)$-module must be of even dimension with generators forming pairs with difference in degree $4n+1$ for some positive integer $n$.  Since the $Q_0$-homology of $\Upsilon_\infty$ is one dimensional, no such condition applies in the case where $M$ is infinite. 

If $M$ is finite but not $Q_0$-local, we might like to describe the possibilities for
\[ H_\bullet (M;Q_0) \cong H_\bullet (L_0 M; Q_0) \]
in the same way. 
This is not possible, as $L_0 M$ is always infinite (though sometimes stably equivalent to a finite module).  However, in the following section, we are able to use the classification theorem in conjunction with $Q_0$-localization to give some results that apply to bounded below $\CA(1)$-modules of finite type with any $Q_1$-homology. 

\section{Applications of The Classification Theorem}
\label{secapp}

In Section~\ref{sech0} we discuss a spectral sequence of Davis--Mahowald that computes \\ $\Ext_{\CA(1)}(M, \FF_2)$ for an $\CA(1)$-module, $M$ \cite{DM}.  Utilizing this spectral sequence to compute \\ $h_0^{-1}\Ext_{\CA(1)}(M, \FF_2)$ generalizes a previously known formula for computing such localizations when the $\CA(1)$-modules involved have a compatible $\CA$-module structure \cite{Davis}.  In Section~\ref{seclifting} we discuss the consequences of this spectral sequence for determining whether an $\CA(1)$-module can be lifted to an $\CA$-module.  In both of these sections, the classification theorem significantly simplifies some computations, though this often depends on being able to give an explicit decomposition of a  bounded below $Q_0$-local $\CA(1)$-module of finite type into a direct sum of seagull modules. 

\subsection{\texorpdfstring{{Computing $h_0^{-1}\Ext_{\CA(1)}$}}{Computing localized Ext}}
\label{sech0}
When computing Adams spectral sequences, it is often overly ambitious to try to compute the entire $E_2$-page at once. One approach to producing partial descriptions is to isolate periodic families within the $E_2$-page, {or within homotopy groups, as in chromatic localization (see for example \cite{HSNil} and \cite{HPS})}.  In this section, we focus on one type of periodicity in $\Ext_{\CA(1)}^{\bullet, \bullet}(M, \FF_2)$ for $M$, a bounded below $\CA(1)$-module of finite type. 

Recall that $\Ext_{\CA(1)}^{\bullet, \bullet}(\FF_2, \FF_2)$ is the $E_2$-page for the Adams spectral sequence converging to the 2-completed homotopy groups of $ko$ and $h_0$ is the nonzero class in $\Ext_{\CA(1)}^{1,1}(\FF_2, \FF_2)$. 
For any $\CA(1)$-module, $M$, $\Ext_{\CA(1)}^{\bullet, \bullet}(M, \FF_2)$ is a module over $\Ext_{\CA(1)}^{\bullet, \bullet}(\FF_2, \FF_2)$, so there is an $h_0$-action on $\Ext_{\CA(1)}^{\bullet, \bullet}(M, \FF_2)$.  

\FloatBarrier
The goal of this section is to
describe a technique for computing  $h_0^{-1}\Ext_{\CA(1)}^{\bullet, \bullet}(\emd, \FF_2)$ for arbitrary $\CA(1)$-modules via the Davis--Mahowald spectral sequence.  While we do not give a full description of all differentials in the spectral sequence, there are still many cases where they can be computed.

In \cite{DM}, Davis and Mahowald define a graded algebra $R_n^\bullet$ that is used to construct the following spectral sequence. 

\begin{prop}[Davis--Mahowald, \cite{DM}] 
For any $\mathcal{A}(n)$-module $M$, there is a spectral sequence converging to $\Ext_{\CA(n)}(M, \FF_2)$ with \[E_1^{\sigma, s, t}=\Ext_{\CA(n-1)}\left((R_n^\sigma )^*\otimes M, \FF_2\right).\]
\end{prop}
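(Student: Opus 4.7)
The plan is to build an explicit resolution of $\FF_2$ by $\CA(n)$-modules that are induced from $\CA(n-1)$-modules indexed by $\sigma$, and then combine change-of-rings with finite-type duality to identify the $E_1$-page.

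First I would set up the Davis--Mahowald algebra $R_n^\bullet$ as an auxiliary graded $\CA(n-1)$-module built from the ``new'' generators of $\CA(n)$ over $\CA(n-1)$. Concretely, one uses the fact that the quotient Hopf algebra $\CA(n)\mm\CA(n-1)$ is an elementary exterior-type object on a finite set of generators (for $n=1$ these correspond to the classes $x_2, x_3$ appearing in $N_\bullet = \FF_2[x_2, x_3]$). The goal is a long exact sequence of $\CA(n)$-modules
\[ 0 \to \FF_2 \to \CA(n)\otimes_{\CA(n-1)} R_n^0 \to \CA(n)\otimes_{\CA(n-1)} R_n^1 \to \CA(n)\otimes_{\CA(n-1)} R_n^2 \to \cdots \]
whose differentials are the Koszul-type coboundaries coming from the coaction by the new generators. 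The existence of this sequence is essentially the content of the Davis--Mahowald construction and uses in a crucial way that $\CA(n)$ is free of finite rank over $\CA(n-1)$.

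Next, I would apply $\Hom_{\CA(n)}(M, -)$ to this resolution and pass to the hypercohomology spectral sequence (equivalently, take a Cartan--Eilenberg bicomplex obtained by resolving each term injectively over $\CA(n)$). This produces a first-quadrant spectral sequence
\[ E_1^{\sigma, s, t} = \Ext^{s,t}_{\CA(n)}\!\bigl(M,\; \CA(n)\otimes_{\CA(n-1)} R_n^\sigma\bigr) \;\Longrightarrow\; \Ext^{\sigma+s, t}_{\CA(n)}(M, \FF_2), \]
with the internal grading $t$ preserved throughout.

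Then I would identify the $E_1$-page by two standard isomorphisms. Change of rings (valid because $\CA(n)$ is free over $\CA(n-1)$) gives
\[ \Ext^{s,t}_{\CA(n)}\!\bigl(M,\; \CA(n)\otimes_{\CA(n-1)} R_n^\sigma\bigr) \;\cong\; \Ext^{s,t}_{\CA(n-1)}(M,\; R_n^\sigma), \]
and finite-type $\FF_2$-linear duality (applicable since $R_n^\sigma$ is finite in each internal degree) converts this to
\[ \Ext^{s,t}_{\CA(n-1)}(M,\; R_n^\sigma) \;\cong\; \Ext^{s,t}_{\CA(n-1)}\!\bigl((R_n^\sigma)^* \otimes M,\; \FF_2\bigr), \]
which is the claimed $E_1$-term.

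The main obstacle is the first step: constructing the resolution and verifying exactness. This requires an explicit choice of the algebra $R_n^\bullet$, identification of the Koszul-type differentials, and a careful argument (via the Milnor basis and the coproduct on $\CA(n)$) that the resulting sequence is actually exact rather than merely a complex. The remaining steps are essentially formal consequences of this construction.
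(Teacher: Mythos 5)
Your plan is sound in outline but runs dual to the construction the paper gives for $n=1$. The paper starts from an exact sequence
\[ \cdots \longrightarrow \Upsilon_1 \otimes N_\sigma \longrightarrow \cdots \longrightarrow \Upsilon_1 \longrightarrow \FF_2 \longrightarrow 0, \]
where $\Upsilon_1 = \CA(1)\mm\CA(0)$ and $N_\sigma = (R_1^\sigma)^*$, tensors it over $\FF_2$ with $M$, splices it into short exact sequences, and applies $\Ext_{\CA(1)}(-, \FF_2)$ to form an exact couple directly; the change of rings is the shearing isomorphism $\Upsilon_1 \otimes N_\sigma \otimes M \cong \CA(1) \otimes_{\CA(0)} (N_\sigma \otimes M)$ followed by Shapiro's lemma in the \emph{first} variable, which needs only that $\CA(1)$ is free over $\CA(0)$. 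You instead coresolve $\FF_2$ by induced modules, apply $\Hom_{\CA(n)}(M, -)$ and take the hypercohomology spectral sequence of a Cartan--Eilenberg bicomplex, then invoke a change of rings in the \emph{second} variable followed by finite-type duality. That route is valid, but the isomorphism $\Ext_{\CA(n)}(M, \CA(n) \otimes_{\CA(n-1)} R_n^\sigma) \cong \Ext_{\CA(n-1)}(M, R_n^\sigma)$ is not a consequence of freeness alone: Shapiro's lemma in the second variable reduces along \emph{coinduced} modules $\Hom_{\CA(n-1)}(\CA(n), R_n^\sigma)$, so you additionally need induction and coinduction to agree, i.e.\ that $\CA(n-1) \subset \CA(n)$ is a Frobenius extension. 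This does hold here (both are finite-dimensional Hopf algebras), but you should flag it. The paper's formulation sidesteps this subtlety and avoids the extra $\FF_2$-duality step by placing the induced module in the source of $\Ext$; your version has the mild advantage of staying in the more familiar covariant $\Ext(M, -)$ picture. Both approaches correctly reduce the substance of the proposition to the exactness of the Koszul-type (co)resolution of $\FF_2$, which is the genuinely nontrivial input from Davis--Mahowald.
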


Here, we describe the construction of the Davis--Mahowald spectral sequence in the case where $n=1$ and construct $N_\bullet := (R_1^\bullet)_*$ directly. The use of this spectral sequence for the computation of $h_0^{-1}\Ext_{\CA(1)}^{\bullet, \bullet}(\text{---}, \FF_2)$ was suggested to the author by John Rognes, and the following development of the spectral sequence relies heavily on upcoming work of Rognes and Bruner \cite{BR}.  

Let 
\[ N = \FF_2[x_2, x_3] \]
where the degree of $x_i$ is $(s,t)=(1,i)$ and let $N_\sigma$ be the homogeneous polynomials in $N$ of polynomial degree $\sigma$.  (Note that this does not align with the notation in \cite{DM}, as we choose instead to have the indices coincide with the degree of each generator.)

Give $N$ the structure of an $\CA(1)$-module as follows:

   \begin{align*}
         Sq^1\left(x_2^i x_3^j\right) &= \begin{cases} x_2^{i-1}x_3^{j+1} & i > 0 ,  \ j \text{ even} \\
         0 & \text{ otherwise,} \end{cases} \\
         Sq^2\left(x_2^i x_3^j\right) &= \begin{cases} x_2^{i-2}x_3^{j+2} & i>1, \  j \equiv 0,1 \pmod{4} \\
         0 & \text{otherwise.} \end{cases} \\
    \end{align*}
Note that $N_\sigma$ is then a submodule of $N$. A portion of the module $N$ is shown in Figure~\ref{figN}.

\begin{figure}[ht!]
    \centering
    \begin{tikzpicture}[scale=0.3]
     \foreach \i in {0,1,2,3, 4, 5}{
        \foreach \j in {0,...,\i}{
        \fill (6*\i, 2*\i+\j) circle (4pt);
        }
        \node [draw=none, fill=none, below] at (6*\i, -0.1){$N_{\i}$};
     }
     \node [draw=none, fill=none, above] at (0,0){{1}};
     \foreach \i in {2,3,4,5}{
     \node [draw=none, fill=none, above] at (6*\i, 3*\i){{$x_3^{\i}$}};
     \node [draw=none, fill=none, below] at (6*\i, 2*\i){{$x_2^{\i}$}};
     }
     \node [draw=none, fill=none, above] at (6, 3){{$x_3$}};
     \node [draw=none, fill=none, below] at (6, 2){{$x_2$}};
     \node [draw=none, fill=none, right] at (12, 5){{$x_2 x_3$}};
     \foreach \i in {1, 2, 3, 4, 5}{
     \sqone(6*\i, 2*\i,);
     }
     \foreach \i in {3, 4, 5}{
     \sqone(6*\i, 2*\i+2,);
     \sqtwoR(6*\i, 2*\i+1,);
     }
     \sqone(30, 14,);
     \foreach \i in {2, 3, 4, 5}{
     \sqtwoL(6*\i, 2*\i,);
     }
      \node [draw=none, fill=none, right] at (18, 7){{$x_2^2 x_3$}};
      \node [draw=none, fill=none, left] at (17.6, 8){{$x_2 x_3^2$}};
\end{tikzpicture}
    \caption{The module, $N$, for $\sigma \leq 5$.  The grading by $\sigma$ is depicted horizontally, and the grading by $t$ is depicted vertically. }
    \label{figN}
\end{figure}
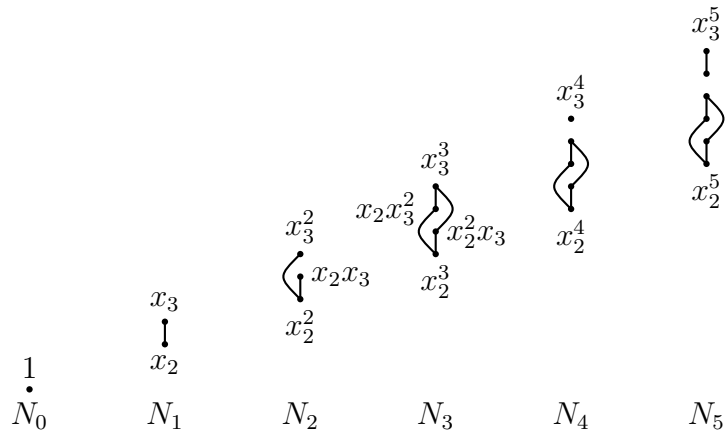
\FloatBarrier
The sequence,
\[  0 \leftarrow \FF_2 \xleftarrow{\partial_0}  \Upsilon_1 \xleftarrow{\partial_1} \Upsilon_1 \otimes N_1 \xleftarrow{\partial_2} \Upsilon_1 \otimes N_2 \xleftarrow{\partial_3} \Upsilon_1 \otimes N_3 \leftarrow \cdots  \]
is exact. For any $\CA(1)$-module, $M$ we have a short exact sequence
\[  0 \to \im(\partial_{\sigma+1})\otimes M \to \Upsilon_1 \otimes N_\sigma \otimes M \to \im(\partial_{\sigma})\otimes M \to 0 \]
for each $\sigma \geq 0$, and a resulting exact couple,
\begin{equation}\label{eqcouple} \begin{tikzcd}[]
\Ext_{\CA(1)}^{s-\sigma-1, t}(\im(\partial_{\sigma+1}) \otimes M, \FF_2) \ar[r, "\delta"] & \Ext_{\CA(1)}^{s-\sigma, t}(\im(\partial_\sigma) \otimes M, \FF_2) \ar[d, "j"] \\
&\Ext_{\CA(1)}^{s-\sigma, t}(\Upsilon_1 \otimes N_\sigma \otimes M, \FF_2) \ar[lu, dashed, "i"]
\end{tikzcd}\end{equation}

Applying a change of rings isomorphism, we see \[ \Ext_{\CA(1)}^{s-\sigma, t}(\Upsilon_1 \otimes N_\sigma \otimes M, \FF_2) = \Ext_{\CA(1)}^{s-\sigma, t}(\CA(1) \otimes_{\CA(0)} \FF_2 \otimes N_\sigma \otimes M, \FF_2) \cong \Ext_{\CA(0)}^{s-\sigma, t}(N_\sigma \otimes M, \FF_2), \]
where $\CA(1) \otimes_{\CA(0)} \FF_2$ is an $\CA(1)$-module via the left action on $\CA(1)$.
The  associated to the exact couple, \eqref{eqcouple}, can therefore be written as
\[E_1^{\sigma, s, t}=\Ext_{\CA(0)}^{s-\sigma,t}(N_\sigma \otimes M, \FF_2) \Rightarrow \Ext_{\CA(1)}^{s-\sigma, t}(M, \FF_2).\]
The differentials have the form $d_r: E_r^{\sigma, s, t} \to E_r^{\sigma+r, s+1, t}$. When depicting $\Ext^{s, t}$ with the Adams grading, we will use the fact that the degree of $d_r$ is $(\sigma, s, t-s) = (r, 1, -1) $.

In order to compute $h_0^{-1}\Ext_{\CA(1)}(M, \FF_2)$, we consider the Davis--Mahowald spectral sequence with $h_0$ inverted.  

\begin{lemma}\label{lemlocalDM}
Let $M$ be any $\CA(1)$-module.  The $h_0$-localization of the Davis--Mahowald spectral sequence for $M$ has the form,
\[ E_1^{\bullet, \bullet, \bullet} = P \otimes H_\bullet (M_*; Q_0) \Rightarrow h_0^{-1}\Ext_{\CA(1)}^{\bullet, \bullet}(M, \FF_2), \]
where $P$ is isomorphic, as a trigraded $\FF_2$-vector space, to $\FF_2[h_0^{\pm 1}, x_3^2]$.
\end{lemma}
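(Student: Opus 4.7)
The plan is to localize the $E_1$-page termwise. The class $h_0 \in \Ext_{\CA(0)}^{1,1}(\FF_2, \FF_2)$ acts on each summand of the $E_1$-page through Yoneda product on the target, and this action is compatible with the differentials of the Davis--Mahowald spectral sequence, since $h_0$ sits in $\sigma = 0$ as a permanent cycle (the image of $h_0 \in \Ext_{\CA(1)}^{1,1}(\FF_2, \FF_2)$ under the change-of-rings isomorphism). Hence inverting $h_0$ on the abutment amounts to inverting $h_0$ termwise on the $E_1$-page, and it suffices to understand $h_0^{-1}\Ext_{\CA(0)}^{\bullet,\bullet}(N_\sigma \otimes M, \FF_2)$ for each $\sigma$.

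Next I would invoke the standard computation: for any bounded-below $\CA(0)$-module $K$ of finite type,
\[ h_0^{-1}\Ext_{\CA(0)}^{\bullet,\bullet}(K, \FF_2) \cong \FF_2[h_0^{\pm 1}] \otimes H_\bullet(K_*; Q_0). \]
This follows from the decomposition of $K$ as a sum of free $\CA(0)$-summands and trivial $\Sigma^k \FF_2$-summands: the free summands contribute nothing after inverting $h_0$, while each $\Sigma^k \FF_2$ contributes one copy of $\FF_2[h_0^{\pm 1}]$ to the $h_0$-localized $\Ext$; the contravariance of $\Ext$ in the first slot flips the internal grading, which is why the dual $K_*$ appears.

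Applying this with $K = N_\sigma \otimes M$ and then using the K\"unneth formula for $Q_0$-Margolis homology (valid because $Sq^1$ is a derivation on tensor products), I obtain
\[ h_0^{-1} E_1^{\sigma,\bullet,\bullet} \cong \FF_2[h_0^{\pm 1}] \otimes H_\bullet((N_\sigma)_*; Q_0) \otimes H_\bullet(M_*; Q_0). \]
It then remains to identify $\bigoplus_\sigma H_\bullet((N_\sigma)_*; Q_0)$ with $\FF_2[x_3^2]$ as a tri-graded vector space. From the formulas for the $Sq^1$-action on $N = \FF_2[x_2, x_3]$, a basis monomial $x_2^i x_3^j$ lies in $\ker Sq^1$ if and only if $i = 0$ or $j$ is odd, and lies in $\im Sq^1$ if and only if $j$ is odd. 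Hence $H_\bullet(N; Q_0)$ has basis $\{x_3^{2k} : k \geq 0\}$. Because $Sq^1$ preserves the polynomial degree $\sigma = i+j$, this splits compatibly over $\sigma$, and dualization preserves the dimensions and degrees.

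The main technical obstacle is the tri-grading bookkeeping: verifying that the class corresponding to $x_3^{2k}$ sits in the trigradation predicted by $E_1^{\sigma,s,t} = \Ext_{\CA(0)}^{s-\sigma,t}(N_\sigma \otimes M, \FF_2)$, and that $h_0$-multiplication shifts by $(0,1,1)$, so that the resulting trigraded vector space $P$ really has the Poincar\'e series of $\FF_2[h_0^{\pm 1}, x_3^2]$ and not some twist. The underlying computations are otherwise routine.
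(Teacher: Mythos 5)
Your proposal follows essentially the same route as the paper: localize the $E_1$-page termwise, compute $h_0^{-1}\Ext_{\CA(0)}(K,\FF_2)\cong\FF_2[h_0^{\pm 1}]\otimes H_\bullet(K_*;Q_0)$ via the $\CA(0)$-decomposition of $K$ into free and trivial summands (the content of Lemma~\ref{lemA0}), apply K\"unneth to $N_\sigma\otimes M$, and identify $H_\bullet(N;Q_0)=\FF_2\{x_3^{2k}\}$ (the content of Theorem~\ref{thmE1}). The trigrading bookkeeping you flag as the remaining technical step is carried out explicitly in the chain of isomorphisms in the paper's proof of Theorem~\ref{thmE1}.
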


Here, $M_*$ is the dual of $M$ as an $\FF_2$-vector space with a right $\CA(1)$-module structure given by precomposition.  The $Q_0$-homology of $M_*$ is defined with respect to the right action of $Q_0$.  We take the grading on $M_*$ to be \[ (M_*)_k = \Hom^k\left(M, \FF_2 \right) = \Hom_{\FF_2}( M, \Sigma^k \FF_2). \]  
\begin{note}
As a module over $E[Q_0]=\CA(0)$, any $\CA(1)-$module $M$ is isomorphic to a direct sum of suspensions of $\FF_2$ and suspensions of $\CA(0)$.  Using this decomposition, it is quick to show that $H_\bullet (M;Q_0) \cong H_\bullet (M_*;Q_0)$.  However, using $H_\bullet(M_*;Q_0)$ allows for a clearer statement of the differentials.  
\end{note}

The content of the proof of Lemma~\ref{lemlocalDM} is identifying the $E_1$-page, which we accomplish in Lemmas~\ref{lemA0} and~\ref{thmE1}.  

\begin{lemma} \label{lemA0}
For any $\CA(1)$-module, $M$,
\[ h_0^{-1}\Ext_{\CA(0)}^{\bullet, \bullet}(M, \FF_2) \cong \FF_2[h_0^{\pm 1}] \otimes  H_{\bullet}(M_*; Q_0). \]
\end{lemma}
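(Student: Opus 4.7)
My proof plan is to restrict $M$ to an $\CA(0)$-module, split off the free summands, and compute $\Ext$ summand by summand. Since $\CA(0) = E[Q_0]$ is an exterior algebra on one generator, every bounded below $\CA(0)$-module of finite type admits a splitting $M \cong F \oplus T$, where $F$ is a direct sum of suspensions of $\CA(0)$ and $T$ is a direct sum of suspensions of $\FF_2$; this is exactly the decomposition appearing in the note immediately preceding the lemma, and it is obtained by choosing a complement to $\im Q_0$ inside $\ker Q_0$ degree by degree using the finite-dimensionality of each $M_k$. Under this isomorphism, $T$ is identified with $H_\bullet(M;Q_0)$, and the same note records that $H_\bullet(M;Q_0) \cong H_\bullet(M_*;Q_0)$, so I may equally well regard $T$ as $H_\bullet(M_*;Q_0)$.

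With the decomposition in hand, the computation is immediate from the minimal resolution
\[\cdots \to \Sigma^{2}\CA(0) \xrightarrow{Q_0} \Sigma^{1}\CA(0) \xrightarrow{Q_0} \CA(0) \to \FF_2 \to 0,\]
which yields $\Ext_{\CA(0)}^{s,t}(\FF_2,\FF_2) = \FF_2$ concentrated on the line $s = t$, i.e.\ $\Ext_{\CA(0)}(\FF_2,\FF_2) = \FF_2[h_0]$ with $|h_0|=(1,1)$. On the other hand $\Ext_{\CA(0)}(\CA(0),\FF_2) = \FF_2$ is concentrated in bidegree $(0,0)$ and is therefore killed upon inverting $h_0$. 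So $h_0$-localization annihilates the contribution from the free summand $F$ entirely, and converts each $\Sigma^n \FF_2$ summand of $T$ into a copy of $\FF_2[h_0^{\pm 1}]$ shifted by $n$ in the internal degree. Assembling, I get
\[h_0^{-1}\Ext_{\CA(0)}^{\bullet,\bullet}(M,\FF_2) \;\cong\; \FF_2[h_0^{\pm 1}] \otimes T \;\cong\; \FF_2[h_0^{\pm 1}] \otimes H_\bullet(M_*;Q_0),\]
as claimed.

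The only real content is the splitting $M \cong F \oplus T$; once that is available the argument is purely formal and amounts to the elementary Ext computation above. The remaining work is bookkeeping to match internal gradings on the two sides, and the identification $H_\bullet(M;Q_0) \cong H_\bullet(M_*;Q_0)$ invoked in the note preceding the lemma removes any ambiguity between the left- and right-action conventions for $Q_0$. I would not expect any serious obstacle; the main point to be careful about is simply making sure the degree shift on each $\Sigma^n\FF_2$ summand of $T$ appears in the correct position in $\FF_2[h_0^{\pm 1}] \otimes H_\bullet(M_*;Q_0)$.
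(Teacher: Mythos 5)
Your proof is correct and takes essentially the same approach as the paper: decompose $M$ over $\CA(0)=E[Q_0]$ into free and trivial summands, compute $\Ext_{\CA(0)}$ termwise, and observe that $h_0$-localization annihilates the contribution of the free part. One small remark: the splitting over $E[Q_0]$ does not actually need the bounded-below or finite-type hypotheses you impose (complements to $\im Q_0$ in $\ker Q_0$, and to $\ker Q_0$ in $M_k$, exist degree by degree in any graded vector space), so your argument covers the lemma's stated generality once you drop that restriction; the paper's proof differs only cosmetically in working with an explicit injective resolution of $\FF_2$ so as to identify the surviving $\Ext$ classes directly with the functionals $b_*\in M_*$, an identification you instead obtain by citing the note's isomorphism $H_\bullet(M;Q_0)\cong H_\bullet(M_*;Q_0)$.
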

\begin{proof}
As an $\CA(0)$-module 
\[ M = \left(\bigoplus_{b \in B} \FF_2 \{b\}\right) \oplus \left(\bigoplus_{c \in C} \CA(0)\{c\} \right), \]
for some $B, C \subset M$.

We can compute $\Ext_{\CA(0)}^{\bullet, \bullet}(M, \FF_2)$ using this decomposition of $M$:
\begin{align*}
    \Ext_{\CA(0)}^{\bullet,\bullet}\left(M, \FF_2 \right) &\cong \left(\bigoplus_{b \in B} \Ext_{\CA(0)}^{\bullet,\bullet}(\FF_2\{b\},\FF_2)  \right) \oplus \left(\bigoplus_{c \in C} \Sigma^{|c|}\FF_2 \right).
\end{align*}
We consider the injective resolution $\FF_2 \to J_\bullet$ where $J_{s}=\Sigma^{-s-1}\FF_2$ and the maps $\ell_s:J_{s-1} \to J_s$ are the unique nonzero maps. 
Any class in $\Ext_{\CA(0)}^{s-\sigma, t}(\FF_2\{b\}, \FF_2)$ is represented by a map \[g:\Sigma^{-t}\FF_2\{b\} \to J_{s-\sigma}\] such that $\ell_{s-\sigma +1} \circ g =0$.  The image of $g$ is thus contained in the top degree of $J_{s-\sigma}$. Such a map factors as a composition,
\[ \Sigma^{-t}\FF_2\{b\} \xrightarrow{\wh{g}} \Sigma^{-s+\sigma}\FF_2 \xrightarrow{h_0^{s-\sigma} \ell_0} J_{s-\sigma} \]

The map $h_0^{s-\sigma} \ell_0$ takes the nonzero class in $\FF_2$ to the top class in $J_{s-\sigma}$.  (Note that this is, in fact, the map that results from acting on $\ell_0$ by $h_0^{s-\sigma}$.)
If $g$ is nonzero, then the degree of the nonzero class in $\Sigma^{-t}\FF_2\{b\}$, $|b|-t$, must be the same as the degree of the top class in $J_{s-\sigma}$, $-s+\sigma$.  So, $t-s+\sigma=|b|$.  The map $\wh{g}$ is the unique nonzero one.  In
\[  \bigoplus_{b \in B}  \Hom_{\FF_2}^{\bullet}(\FF_2\{b\},\FF_2) \subseteq \Hom_{\FF_2}^\bullet(M, \FF_2), 
\]
the map $\wh{g}$ is $b_*$.   
So, 
\[ \bigoplus_{b \in B} \Ext_{\CA(0)}^{\bullet, \bullet}(\FF_2\{b\}, \FF_2) \cong \FF_2[h_0] \otimes  \bigoplus_{b \in B} \FF_2[h_0]\{b_*\} \cong H_{\bullet}(M_*; Q_0) , 
\]
and thus 
\begin{align*}
    \Ext_{\CA(0)}^{\bullet,\bullet}\left(M, \FF_2 \right) &\cong  \left(\FF_2[h_0] \otimes H_\bullet(M_*;Q_0)   \right) \oplus \left(\bigoplus_{c \in C} \Sigma^{|c|}\FF_2 \right).
\end{align*}
When $h_0$ is inverted, this gives us 
\begin{align*}
    h_0^{-1}\Ext_{\CA(0)}^{\bullet,\bullet}\left(M, \FF_2 \right) 
    &\cong \FF_2[h_0^{\pm 1}] \otimes H_\bullet(M_*;Q_0) 
\end{align*}
with 
\[ h_0^{-1}\Ext_{\CA(0)}^{s-\sigma, t}(M, \FF_2) \cong H_{t-(s-\sigma)}(M)\{h_0^{s-\sigma}\} \]
and we note that inverting $h_0$ is an isomorphism in all degrees $s>0$.  
\end{proof}

\begin{thm}\label{thmE1}
The $E_1$-page of the $h_0$-localized Davis--Mahowald spectral sequence for an $\CA(1)$-module, $M$, is isomorphic, as a trigraded $\FF_2$-vector space, to 
\[ \FF_2[h_0^{\pm 1}, x_3^2] \otimes H_\bullet(M_*;Q_0).\]
\end{thm}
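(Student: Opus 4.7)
The plan is to apply Lemma~\ref{lemA0} with $N_\sigma \otimes M$ in place of $M$, which for each $\sigma \geq 0$ yields
\[ h_0^{-1}\Ext_{\CA(0)}^{\bullet, \bullet}(N_\sigma \otimes M, \FF_2) \cong \FF_2[h_0^{\pm 1}] \otimes H_\bullet((N_\sigma \otimes M)_*; Q_0). \]
Summing over $\sigma$ reduces the theorem to identifying $\bigoplus_\sigma H_\bullet((N_\sigma \otimes M)_*; Q_0)$ as a trigraded $\FF_2$-vector space with $\FF_2[x_3^2] \otimes H_\bullet(M_*; Q_0)$, where the $\sigma$-grading on $\FF_2[x_3^2]$ is inherited from $x_3^{2k} \in N_{2k}$.

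For this, I would use a Künneth argument. Since $Q_0 = Sq^1$ is primitive in $\CA(1)$, the Cartan formula makes it act as a derivation on any tensor product with the diagonal action. Using that $(N_\sigma \otimes M)_* \cong (N_\sigma)_* \otimes M_*$ as right $\CA(1)$-modules (valid since $N_\sigma$ is finite-dimensional) and that we work over the field $\FF_2$, the Künneth theorem for chain complexes gives
\[ H_\bullet((N_\sigma)_* \otimes M_*; Q_0) \cong H_\bullet((N_\sigma)_*; Q_0) \otimes H_\bullet(M_*; Q_0). \]
The note preceding the theorem already furnishes $H_\bullet(X_*; Q_0) \cong H_\bullet(X; Q_0)$ for any $\CA(1)$-module $X$ via the $\CA(0)$-module decomposition, so the problem reduces to computing $\bigoplus_\sigma H_\bullet(N_\sigma; Q_0)$.

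The final step is a direct monomial calculation in $N = \FF_2[x_2, x_3]$. From $Sq^1(x_2^i x_3^j) = x_2^{i-1}x_3^{j+1}$ when $i > 0$ and $j$ is even, and zero otherwise, the kernel of $Sq^1$ consists of monomials with $i = 0$ or with $j$ odd, while the image consists of monomials with $j \geq 1$ odd. Thus every kernel class $x_2^i x_3^j$ with $j$ odd is also in the image (being hit by $x_2^{i+1}x_3^{j-1}$), and the only surviving classes are $x_3^{2k}$ for $k \geq 0$, contributing $\FF_2[x_3^2]$. I expect the main technical burden to be bookkeeping the trigrading so that $h_0$ and $x_3^2$ land in the declared tridegrees and the natural $\sigma$-grading on $\FF_2[x_3^2]$ matches the $\sigma$-filtration of the spectral sequence; the rest is Lemma~\ref{lemA0}, a Künneth argument over a field, and the monomial computation above.
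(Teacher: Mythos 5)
Your proposal is correct and follows essentially the same path as the paper's proof: apply Lemma~\ref{lemA0} to each $N_\sigma\otimes M$, split $H_\bullet((N_\sigma\otimes M)_*;Q_0)$ by a K\"unneth argument over $\FF_2$ (with $Q_0$ acting as a derivation via the Cartan formula), and reduce to the monomial computation of $H_\bullet(N_\bullet;Q_0)\cong\FF_2[x_3^2]$. The paper states the $Q_0$-homology of $N_\sigma$ without proof and is more terse about the K\"unneth step, but the structure and key inputs are identical to yours.
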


\begin{note}
We emphasize that this is only an isomorphism of vector spaces. In the case where $M$ is an $\CA(1)$-module coalgebra, giving $\Ext_{\CA(0)}^{\bullet, \bullet}(N_\bullet \otimes M, \FF_2)$ the structure of an algebra, the Davis--Mahowald spectral sequence is a spectral sequence of algebras and the isomorphism in Theorem~\ref{thmE1} is an isomorphism of algebras.  
\end{note}

\begin{proof}
The $Q_0$-homology of $N_\bullet$ is 
\[ H_k (N_\sigma;Q_0) = \begin{cases} 
\FF_2\{x_3^\sigma\} & \sigma \text{ even, } k=3\sigma\\
0 & \sigma \text{ otherwise.}
\end{cases} \]
This identifies the $E_1$-page,
\begin{align*} E_1^{\sigma, s, t} &\cong h_0^{-1}\Ext_{\CA(0)}^{s-\sigma, t}\left( N_\sigma \otimes M, \FF_2 \right) \\ &\cong   H_{t-(s-\sigma)}\left((N_\sigma \otimes M)_*; Q_0\right)\{h_0^{\pm 1}\}   \\  &\cong  H_{3\sigma}\big(N_\sigma; Q_0\big) \otimes H_{t-s-2\sigma}(M_*; Q_0)\{h_0^{\pm 1}\} \\
& \cong \begin{cases} H_{t-s-2\sigma}((M)_*; Q_0)\{h_0^{\pm 1}, x_3^\sigma\} & \sigma \text{ is even} \\
0 & \sigma \text{ is odd }\end{cases} \\
&\cong \left(\FF_2[h_0^{\pm 1}, x_3^2] \otimes H_\bullet(M_*; Q_0) \right)_{\sigma, s, t} . \qedhere \end{align*}
\end{proof}
Notice $E_1^{\sigma, s, t}$ is zero when $\sigma$ is odd. The degree of $d_r$ is $(\sigma, s, t-s)=(r, 1, -1)$, so nonzero differentials will only occur when both $\sigma$ and $r$ are even.
In computing the first potentially nontrivial differential, $d_2$, we will want to utilize an explicit description of the exact sequence, 
\[  0 \leftarrow \FF_2 \xleftarrow{\partial_0}  \Upsilon_1 \xleftarrow{\partial_1} \Upsilon_1 \otimes N_1 \xleftarrow{\partial_2} \Upsilon_1 \otimes N_2 \xleftarrow{\partial_3} \Upsilon_1 \otimes N_3 \leftarrow \cdots  \]
There is a decomposition,
\[ \Upsilon_1 \otimes N_\sigma = A_\sigma \oplus B_\sigma \]
where 
\begin{align*}  A_\sigma &= \begin{cases} \CA(1)\{1 \otimes x_2^{\sigma-2j}x_3^{2j} | 0 \leq j \leq 2 \lfloor \sigma/4 \rfloor - 1 \} \oplus \CA(1)\{1 \otimes x_2 x_3^{\sigma-1}\} & \sigma \equiv 3 \pmod{4} \\
\CA(1)\{1 \otimes x_2^{\sigma-2j}x_3^{2j} | 0 \leq j \leq 2 \lfloor \sigma/4 \rfloor - 1 \} & \text{otherwise,}
\end{cases} \\
B_\sigma &= \begin{cases}
\Upsilon_1\{1 \otimes x_3^\sigma\} & \sigma \equiv 0 \pmod{4} \\
\CA(1)\{1 \otimes x_2 x_3^{\sigma-1}\} & \sigma \equiv 1 \pmod{4} \\ 
\CA(1)\{1 \otimes x_2^2 x_3^{\sigma-2}\} \oplus \Upsilon_1\{1 \otimes x_3^\sigma\} & \sigma \equiv 2 \pmod{4} \\
\CA(1)\{1 \otimes x_2^3 x_3^{\sigma-3}\} & \sigma \equiv 3 \pmod{4}.
\end{cases}
\end{align*}
For all $1 \otimes x_2 ^i x_3^{\sigma-i}$ in $A_\sigma$,
\begin{align*}
    \partial_\sigma (1 \otimes x_2^i x_3^{\sigma-i}) = \begin{cases}
    1 \otimes x_2^{i-3}x_3^{\sigma-i+2} & i \equiv \sigma \pmod{4} \\
    0 & \text{otherwise.}
    \end{cases}
\end{align*}
For all $1 \otimes x_2 ^i x_3^{\sigma-i}$ in $B_\sigma$,
\begin{align*}
    \partial_\sigma (1 \otimes x_2^i x_3^{\sigma-i}) = \begin{cases}
    Sq^1 Sq^2 Sq^1 Sq^2 (1 \otimes x_2^3 x_3^{\sigma-4}) & \sigma \equiv 0 \pmod{4}  \\
    Sq^2 (1 \otimes x_3^{\sigma-1}) & i>0, \ \sigma \equiv 1, 2 \pmod{4} \\
    Sq^2 Sq^2 (1 \otimes x_2 x_3^{\sigma-2}) & i=0, \ \sigma \equiv 2 \pmod{4}  \\
    Sq^2(1 \otimes x_2^2 x_3^{\sigma-3}) + 1 \otimes x_3^{\sigma-1} &  \sigma \equiv 3 \pmod{4} \\ 
    \end{cases}
\end{align*}
Note that $\partial_\sigma$ restricts to maps $A_\sigma \to A_{\sigma-1}$ and $B_\sigma \to B_{\sigma-1}$.  When computing $d_2$, we will only have to consider $\partial_\sigma$ on $B_\sigma$, so we give a diagram of the exact sequence \[B_{4k} \xleftarrow{\partial_{4k+1}} B_{4k+1} \xleftarrow{\partial_{4k+2}} B_{4k+2} \xleftarrow{\partial_{4k+3}} B_{4k+3} \xleftarrow{\partial_{4k+4}} B_{4k+4}  \] in Figure~\ref{figseagulltensorN}.
\begin{figure}[!htbp]
    \centering
    \begin{tikzpicture}[scale=0.43]
    %%%%%%% F2
   \draw[->] (3.8,0)--(0.2,0);
    %%%%%%% 0
    \SgR(4,0);
    %%%%%%%%
    \fill[gray] (6,2) circle (3pt);
    \fill[gray] (6,3) circle (3pt);
    \fill[gray] (6,5) circle (3pt);
    \sqone(6,2,gray);
    \sqtwoR(6,3,gray);
    %%%%%%% 1
    \Aone(10,2);
    %%%%%%%%
    \fill[gray] (13,4) circle (3pt);
    \fill[gray] (13,5) circle (3pt);
    \fill[gray] (15,6) circle (3pt);
    \fill[gray] (15,7) circle (3pt);
    \fill[gray] (15,8) circle (3pt);
    \sqone(13,4,gray);
    \sqone(15,7,gray);
    \sqtwoCR(13,4,gray);
    \sqtwoCR(13,5,gray);
    \sqtwoR(15,6, gray);
    %%%%%%% 2
    \Aone(19,4);
    \SgR(23,6);
    %%%%%%%%%%%%%%%
    \draw[->] (9.8,2)--(6.2,2);
    \draw[->] (18.8,4)--(13.2,4);
    \draw[->] (22.8,6).. controls (18.5, 5.5).. (15.2,6);
    %%%%%%%%%%%%%%%%
    \fill[gray] (27,6) circle (3pt);
    \fill[gray] (27,7) circle (3pt);
    \fill[gray] (27,8) circle (3pt);
    \fill[gray] (27,9) circle (3pt);
    \fill[gray] (29,9) circle (3pt);
    \fill[gray] (29,10) circle (3pt);
    \fill[gray] (29,11) circle (3pt);
    \sqone(27,6, gray);
    \sqone(27,8, gray);
    \sqone(29,9, gray);
    \sqtwoL(27,6,gray);
    \sqtwoCR(27,7,gray);
    \sqtwoCR(27,8,gray);
    \sqtwoCR(27,9,gray);
    %%%%%%%%%%%%%%%%%
    \Aone(32, 6);
    %%%%%%%%%%%%%%%%%
    \draw[->] (31.8,6)--(27.2,6);
    %%%%%%%%%%%%%%%%%
    \fill[gray] (0,0) circle (3pt);
    %%%%%%%%%%%%%%%%%
    \node at (4,-0.8) {\footnotesize $1 \otimes x_3^{4k}$};
    \node at (10.5,1.2) {\footnotesize $1 \otimes x_2 x_3^{4k}$};
    \node at (19.5,3.2) {\footnotesize $1 \otimes x_2^{2}x_3^{4k}$};
    \node at (22,5.2) {\footnotesize $1 \otimes x_3^{4k+2}$};
    \node[gray] at (27.5,4.7) {\footnotesize \begin{tabular}{c}$Sq^2(1 \otimes x_2^{2}x_3^{4k})$ \\$ + 1 \otimes x_3^{4k+2}$\end{tabular}};
    \node at (32.9,5.2) {\footnotesize $1 \otimes x_2^{4k+3}$};
    %%%%%%%%%%%%%%%%%%%%
    \node[gray] at (0,-2) {\footnotesize $\im (\partial_{4k})$};
    \node at (2.8,-2) {\footnotesize $B_{4k}$};
    \node[gray] at (6,-2) {\footnotesize $\supseteq \im (\partial_{4k+1})$};
    \node at (10.8,-2) {\footnotesize $B_{4k+1}$};
    \node[gray] at (14.6,-2) {\footnotesize $\supseteq \im (\partial_{4k+2})$};
    \node at (22,-2) {\footnotesize $B_{4k+2}$};
    \node[gray] at (27,-2) {\footnotesize $\supseteq$ \hspace{0.5em}  $ \im (\partial_{4k+3})$};
    \node at (32.3,-2) {\footnotesize $B_{4k+3}$};
    %%%%%%%%%%%%%%%%%%%%%%%%%%%%%%%%
    \draw[->] (2, -2)--(1.5,-2);
    \draw[->] (9.6,-2)--(8.5,-2);
    \draw[->] (20.8,-2)--(17.2,-2);
    \draw[->] (31,-2)--(30,-2);
    \end{tikzpicture}
    \caption{The resolution $B_\bullet$.  The maps $\partial_\sigma$ are drawn only for the generators.  Since each $\partial_\sigma$ is an $\CA(1)$-map, this determines the value of $\partial_\sigma$ on any class.}
    \label{figseagulltensorN}
\end{figure}
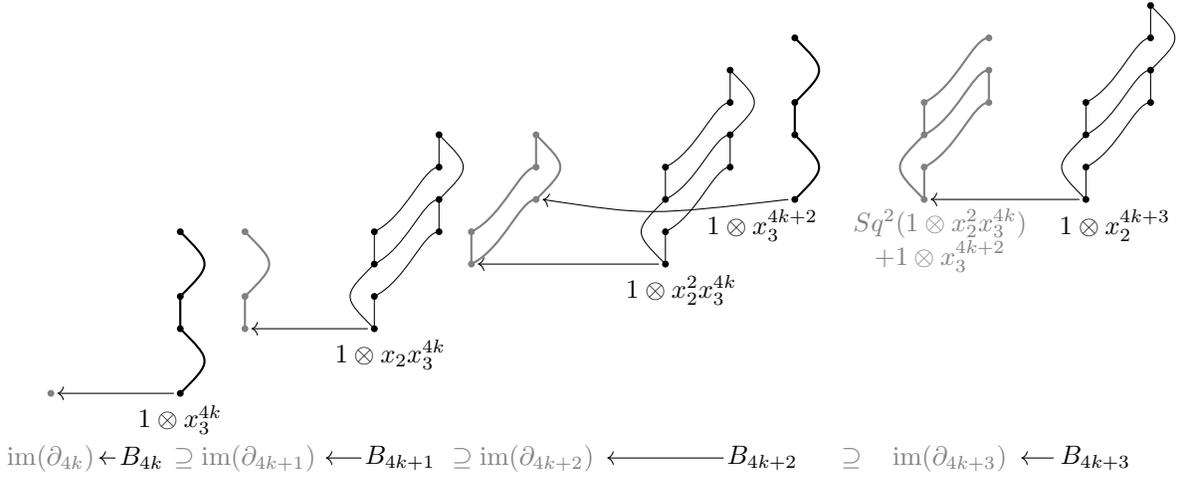

\FloatBarrier

We will also need an injective resolution of $\FF_2$ over $\CA(1)$.  We choose the following:
\begin{align*}
    I_s = \begin{cases}
    \CA(1)\{e_{s, -s-4j-6} \ |\ 0 \leq j \leq s/2 \} & s \equiv 0 \pmod{4} \\
    \CA(1)\{e_{s, -s-4j-6} \ |\ 0 \leq j \leq (s-1)/2 \} \oplus \CA(1)\{ e_{s, -5-3s} \} & s \equiv 1 \pmod{4} \\
    \CA(1)\{e_{s, -s-4j-6} \ |\ 0 \leq j \leq (s-2)/2 \} \oplus \CA(1)\{ e_{s, -4-3s} \} & s \equiv 2 \pmod{4} \\
    \CA(1)\{e_{s, -s-4j-6} \ |\ 0 \leq j \leq (s-1)/2 \} & s \equiv 3 \pmod{4} \\
    \end{cases}
\end{align*}
where $|e_{s,k}|=k$.  
The maps $f_s: I_{s-1} \to I_s$ are the $\CA(1)$-maps determined by,
\begin{align*}
    f_s(e_{s-1,k}) &= \begin{cases}
    Sq^1 e_{s, k-1}+Sq^2 e_{s, k-2} & k=-3s-3, s \equiv 1 \pmod{4} \\
        Sq^1 e_{s,k-1} +Sq^2 Sq^1 e_{s, k-3} & k=-3s-1, s \equiv 2 \pmod{4} \\
    Sq^2 e_{s,k-2} & k=-3s-2, s \equiv 2 \pmod{4} \\
    Sq^2 Sq^1 e_{s,k-3} & k=-3s-4, s \equiv 3 \pmod{4} \\
     Sq^1 e_{s, k-1} + Sq^2 Sq^1 Sq^2 e_{s, k-5} & \text{otherwise.}
    \end{cases}
\end{align*}

More efficiently, we could say \[f_s(e_{s-1, k})= Sq^1 e_{s, k-1}+Sq^2 e_{s, k-2} + Sq^2 Sq^1 e_{s, k-3}+Sq^2 Sq^1 Sq^2 e_{s,k-5},\] omitting any terms where the required generators do not exist in $I_s$.

\begin{figure}[ht!]
    \centering
    \caption{The beginning of the $\CA(1)$-resolution $\FF_2 \to I_\bullet$. An arrow that hits an arc between circled classes indicates that $f_s$ on the class where the arrow originates is the sum of the circled classes. As in Figure~\ref{figseagulltensorN}, the values of the maps are only indicated for generators.}
    \label{figF2res}
    \begin{tikzpicture}[scale=0.4]
        % F_2
        \fill(0,0) circle (3pt);
        % I_0
        \Aonecolor(2,-6, gray);
        \node[below] at (2,-6){$e_{0,-6}$};
        \node[above] at (4,0){$t_0$};
        % f_0
        \draw[->](0.2,0)--(3.8,0);
        % I_1
        \Aonecolor(6,-7, gray);
        \Aonecolor(8,-8, gray);
        \node[below] at (6,-7){$e_{1,-7}$};
        \node[below] at (8,-8){$e_{1,-8}$};
        \node[above] at (8,-1){$t_1$};
        % f_1
        \draw(6,-6) circle (8pt);
        \draw(8,-6) circle (8pt);
        \draw(6.2,-6) .. controls(7,-6.25)..(7.8,-6);
        \draw[->](2.2,-6).. controls(3,-6.5) and (6,-6.5)..(6.6,-6.25);
        % I_2
        \Aonecolor(12, -8, gray);
        \Aonecolor(13,-10, gray);
        \node[left] at (12,-8){$e_{2,-8}$};
        \node[below] at (13,-10){$e_{2,-10}$};
        \node[above] at (14,-2){$t_2$};
        % f_2
        \draw(12,-7) circle (8pt);
        \draw(15,-7) circle (8pt);
        \draw(12.2,-7) .. controls(13,-7.25)..(14.8,-7);
        \draw[->](6.2,-7).. controls(7,-7.5) and (11,-7.5)..(12.7,-7.25);
        \draw[->](8.2,-8)..controls(11,-8.5)..(12.8,-8);
        % I_3
        \Aonecolor(18,-9, gray);
        \Aonecolor(18,-13, gray);
        \node[left] at (18,-9){$e_{3,-9}$};
        \node[below] at (18,-13){$e_{3,-13}$};
        \node[above] at (20,-3){$t_3$};
        % f_3
        \draw(18,-8) circle (8pt);
        \draw(20,-8) circle (8pt);
        \draw(18.2,-8) .. controls (19, -7.75)..(20, -8);
        \draw[->](12,-8)..controls (13, -7.25) and (18, -7.25)..(18.8, -7.75);
        \draw[->](13.2,-10)..controls (15,-9.5) and (18,-9.5)..(19.8,-10);
        %I_4
        \Aonecolor(24,-10, gray);
        \Aonecolor(24,-14, gray);
        \Aonecolor(24, -18, gray);
        \node[left] at (24,-10){$e_{4,-10}$};
        \node[left] at (24,-14){$e_{4,-14}$};
        \node[below] at (24, -18){$e_{4,-18}$};
        \node[above] at (26, -4){$t_4$};
        % f_4
        \draw(24,-9) circle (8pt);
        \draw(26,-9) circle (8pt);
        \draw(24.2,-9) .. controls (25, -8.75)..(26, -9);
        \draw[->](18,-9)..controls (19, -8.25) and (24, -8.25)..(24.8, -8.75);
                \draw(24,-13) circle (8pt);
        \draw(26,-13) circle (8pt);
        \draw(24.2,-13) .. controls (25, -12.75)..(26, -13);
        \draw[->](18,-13)..controls (19, -12.25) and (24, -12.25)..(24.8, -12.75);
    \end{tikzpicture}
\end{figure}
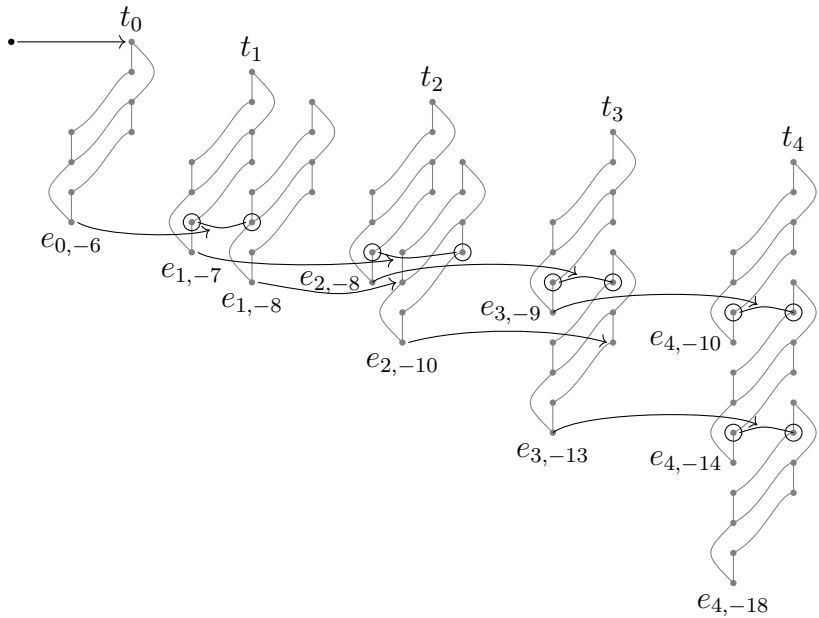

Let $r_s= Sq^2 Sq^1 Sq^2 e_{s, -s-6}$ and $t_s = Sq^1 Sq^2 Sq^1 Sq^2 e_{s, -s-6}$.  Then \\ $Sq^1 r_s = t_s$ and $f_{s}(t_{s-1})=r_s$ for $s \geq 1$.  (When $s=0$, we have $f_0(1)=t_0$.)  Note that $h_0^s f_0$ is the unique map $\FF_2 \to I_s$ that takes $1$ to $t_s$.
\FloatBarrier

Consider the following distinguished classes on the $E_1$-page.  \begin{defn}\label{defnphib}
Let $\varphi_m^{\sigma}: \Upsilon_1 \otimes N_\sigma \otimes M \to \Sigma^{|m|}\FF_2$ be the unique $\CA(1)$-map that takes $1 \otimes a \otimes b$ to $(x_3^\sigma)_*(a)m_*(b)$.
\end{defn}
The class $[\varphi_m^{\sigma} \cdot t_{s-\sigma}]$ in $\Ext_{\CA(1)}^{s-\sigma,|m|}\left(\Upsilon_1 \otimes M, \FF_2\right)$ is the image of \[h_0^{s-\sigma} x_3^\sigma [m_*] \in \FF_2[h_0^{\pm 1}, x_3^{2}] \otimes H_\bullet (M_*;Q_0) \] under the shearing and change of rings isomorphisms. Let $\CB$ be any set of elements in $M$ such that $\{[m] | m \in \CB\}$ is an $\FF_2$-basis for $H_\bullet (M; Q_0)$.  Then $\{[\varphi_{m}^{\sigma} \cdot t_s] |  m \in \CB, \sigma \geq 0  \}$ is an $\FF_2$-basis for $h_0^{\pm 1} \Ext_{\CA(0)}^{\bullet, \bullet}(N_\bullet \otimes M, \FF_2)$.

In computing $d_2$, we will apply the following lemma to $\varphi_m^{\sigma}$ and related maps. 
\begin{lemma}\label{lemF2map}
Let $M, N$ be $\CA(1)$-modules.

For any $\CA(1)$-map, $f: N \otimes M \to \FF_2$,
\[ f\left( Sq^2 Sq^1 Sq^2 n \otimes m\right) = f\left(n \otimes Sq^2 Sq^1 Sq^2 m \right) \]
for all $n \in N$ and all $m \in M$.
\end{lemma}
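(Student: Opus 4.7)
The plan is to reduce the identity to the fact that the element $Sq^2 Sq^1 Sq^2 \in \CA(1)$ is fixed by the antipode $\chi$ of the Steenrod algebra. The key ancillary statement is: for any $\CA(1)$-linear map $g : \CA(1) \otimes M \to \FF_2$ (with the diagonal $\CA(1)$-action on the tensor product), one has $g(a \otimes m) = g(1 \otimes \chi(a) m)$ for every $a \in \CA(1)$ and $m \in M$. Applying this with $a = Sq^2 Sq^1 Sq^2$ yields the lemma after a routine reduction to the case $N = \CA(1)$.

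First I would reduce to the case $N = \CA(1)$. Given $f : N \otimes M \to \FF_2$ and $n \in N$, define $g_n : \CA(1) \otimes M \to \FF_2$ by $g_n(a \otimes m) = f(a n \otimes m)$. A direct computation using the Cartan formula shows $g_n$ is $\CA(1)$-linear: for $b \in \CA(1)_+$, writing the coproduct in Sweedler notation,
\[
g_n\bigl(b \cdot (a \otimes m)\bigr) \;=\; \sum f\bigl(b^{(1)} a\, n \otimes b^{(2)} m\bigr) \;=\; f\bigl(b \cdot (an \otimes m)\bigr) \;=\; 0.
\]
Both sides of the lemma then become $g_n(Sq^2 Sq^1 Sq^2 \otimes m)$ and $g_n(1 \otimes Sq^2 Sq^1 Sq^2 m)$, respectively.

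Next I would prove the ancillary statement via the classical shearing isomorphism
\[
\Phi : \bigl(\CA(1) \otimes M\bigr)_{\mathrm{left}} \;\xrightarrow{\;\cong\;}\; \bigl(\CA(1) \otimes M\bigr)_{\mathrm{diag}}, \qquad \Phi(a \otimes m) = \sum a^{(1)} \otimes a^{(2)} m,
\]
whose inverse is $\Phi^{-1}(a \otimes m) = \sum a^{(1)} \otimes \chi(a^{(2)}) m$. Taking $\CA(1)_+$-coinvariants, the left-action side becomes $M$, and the composite quotient map sends $a \otimes m$ to $\chi(a) m$. Any $\CA(1)$-map $g$ out of the diagonal module must therefore factor as $a \otimes m \mapsto \chi(a) m \mapsto g(1 \otimes \chi(a) m)$. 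Alternatively, the identity can be proved by induction on the filtration degree of $a$, using the Cartan expansion of $a \cdot (1 \otimes m)$ together with the antipode recursion $\sum \chi(a^{(1)}) a^{(2)} = \epsilon(a)\cdot 1$.

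Finally, since $\chi$ is an anti-algebra homomorphism with $\chi(Sq^1) = Sq^1$ and $\chi(Sq^2) = Sq^2$ (both immediate from $\sum_{i+j=n} \chi(Sq^i) Sq^j = 0$), one obtains $\chi(Sq^2 Sq^1 Sq^2) = \chi(Sq^2)\chi(Sq^1)\chi(Sq^2) = Sq^2 Sq^1 Sq^2$, and the lemma follows by chaining the identities above. The main obstacle is precisely the shearing/antipode step: a naive approach using only the Cartan expansion of $Sq^2 Sq^1 Sq^2(n \otimes m)$ does not suffice, because in characteristic $2$ it collapses to the tautology $f(A+B) = f(A+B)$ with $A = Sq^2 Sq^1 Sq^2 n \otimes m$ and $B = n \otimes Sq^2 Sq^1 Sq^2 m$, so the antipode input is genuinely needed.
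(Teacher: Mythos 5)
Your antipode/shearing argument is a valid route to the lemma and is internally consistent: $Sq^2Sq^1Sq^2$ is fixed by the antipode, the identity $g(a\otimes m)=g(1\otimes\chi(a)m)$ for an $\CA(1)$-linear $g$ out of a diagonally-acted tensor product is a standard Hopf-algebra fact, and the reduction to $N=\CA(1)$ is fine. However, the closing claim is false and signals a real misunderstanding: the direct Cartan-formula approach does suffice, and it is exactly the paper's proof. Expanding by the coproduct,
\[
Sq^2Sq^1Sq^2(n\otimes m)=Sq^2Sq^1Sq^2 n\otimes m+Sq^1\bigl(Sq^2 n\otimes Sq^2 m\bigr)+Sq^2\bigl(Sq^2 n\otimes Sq^1 m+Sq^1 n\otimes Sq^2 m\bigr)+n\otimes Sq^2Sq^1Sq^2 m,
\]
and applying $f$ kills the left-hand side (the augmentation ideal acts as zero on $\FF_2$, and $f$ is $\CA(1)$-linear) as well as the two middle terms (they are of the form $Sq^1(\cdots)$ and $Sq^2(\cdots)$). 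What survives is $0=f(Sq^2Sq^1Sq^2 n\otimes m)+f(n\otimes Sq^2Sq^1Sq^2 m)$, which over $\FF_2$ is the lemma. This is not a tautology $f(A+B)=f(A+B)$; the non-trivial input is that $f$ annihilates the image of $\CA(1)_+$, which is the same input your shearing argument uses when passing to coinvariants. The two approaches are in fact closely related --- the middle Cartan terms organize as $Sq^1(\cdots)+Sq^2(\cdots)$ precisely because $\chi$ fixes $Sq^2Sq^1Sq^2$ --- but the paper's version is a one-line computation, while yours adds a reduction to $N=\CA(1)$ and the shearing isomorphism without gaining anything for this particular statement. What your argument does buy is the more general formula $f(xn\otimes m)=f(n\otimes\chi(x)m)$ for arbitrary $x\in\CA(1)$; but for this lemma that generality is unused, and the dismissal of the direct method should be removed.
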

\begin{proof}
For all $n \in N$ and $m \in M$,
\begin{align*}
    Sq^2 Sq^1 Sq^2 \left( n \otimes m\right) =&  Sq^2 Sq^1 Sq^2 n \otimes m + Sq^1\left(Sq^2 n \otimes Sq^2 m \right) \\ &+ Sq^2\left( Sq^2n \otimes Sq^1 m + Sq^1 n \otimes Sq^2 m \right) + n \otimes Sq^2 Sq^1 Sq^2 m.
\end{align*}
Since $Sq^1$ and $Sq^2$ act trivially on $\FF_2$ and $f$ is an $\CA(1)$-map, \[f(Sq^1 x)=f(Sq^2 x) = 0\] for all $x \in N \otimes M$.  Thus,
\[ 0 = f\left(Sq^2 Sq^1 Sq^2 n \otimes m\right) + f\left(n \otimes Sq^2 Sq^1 Sq^2 m\right). \qedhere \]
\end{proof}

We can now compute the second differential. 
\begin{thm}\label{thmd1}
The second differential is given by
\[ d_2\left(h_0^{s-\sigma} x_3^{\sigma} [b_*] \right) = h_0^{s-\sigma-1}x_3^{\sigma+2}  \left[ Sq^2 Sq^1 Sq^2 b_* \right] \]
for any nonzero $h_0^{s-\sigma} x_3^{\sigma} [b_*]  \in  \FF_2[h_0^{\pm 1}, x_3^2] \otimes H_\bullet (M_*;Q_0) $.
\end{thm}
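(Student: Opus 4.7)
The plan is to chase an explicit cocycle representing $h_0^{s-\sigma}x_3^\sigma[b_*]$ through the Davis--Mahowald double complex $\Hom_{\CA(1)}(\Upsilon_1\otimes N_\bullet\otimes M,\,I_\bullet)$ and read off $d_2$ via the standard zig-zag: apply $\partial_{\sigma+1}$, lift vertically through the injective resolution, then apply $\partial_{\sigma+2}$. Represent the class by $\Phi := \varphi_b^\sigma\cdot t_{s-\sigma}$, the $\CA(1)$-map sending $1\otimes x_3^\sigma\otimes m$ to $b_*(m)\cdot t_{s-\sigma}$ and vanishing on the complementary free summand $A_\sigma\otimes M$.

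To compute $\Phi\circ\partial_{\sigma+1}$, note that $\partial_{\sigma+1}$ preserves the $A\oplus B$ decomposition, so only $B_{\sigma+1}$ contributes. On its generator, the explicit formula for $\partial_{\sigma+1}$ produces a $Sq^2$-action on the generator of $B_\sigma$ (or an expression containing $1\otimes x_3^\sigma$ in the subcase $\sigma\equiv 2\pmod 4$). After a diagonal Cartan expansion and the key cancellation $Sq^2\cdot t_{s-\sigma}=0$ in $I_{s-\sigma}$ (which holds because $Sq^2Sq^1Sq^2Sq^1Sq^2$ has Steenrod degree $8>6$ and is therefore zero in $\CA(1)$), the result reduces to an $\CA(1)$-map whose value on the $B_{\sigma+1}$-generator paired with $m$ is an $\FF_2$-scalar (depending on the right action of $\CA(1)$ on $b_*$) times $t_{s-\sigma}$. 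The identity $f_{s-\sigma}(r_{s-\sigma-1})=t_{s-\sigma}$---immediate from the explicit formula for $f_{s-\sigma}$ together with Equation~\eqref{eq4}---then provides a vertical lift $\Theta\colon\Upsilon_1\otimes N_{\sigma+1}\otimes M\to I_{s-\sigma-1}$ satisfying $f_{s-\sigma}\circ\Theta=\Phi\circ\partial_{\sigma+1}$, obtained by replacing each occurrence of $t_{s-\sigma}$ in the formula for $\Phi\circ\partial_{\sigma+1}$ by $r_{s-\sigma-1}$. Well-definedness of $\Theta$ as an $\CA(1)$-module map is most easily verified via the change-of-rings identification $\Hom_{\CA(1)}(\Upsilon_1\otimes X,\,I)\cong\Hom_{\CA(0)}(X,\,I)$, which reduces the check to $\CA(0)$-linearity on $N_{\sigma+1}\otimes M$.

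Finally, $d_2[\Phi]$ is represented by $\Theta\circ\partial_{\sigma+2}$, which is evaluated on $1\otimes x_3^{\sigma+2}\otimes m$. The $\partial_{\sigma+2}$-formula (invoking the $\CA(1)$-relation $Sq^2Sq^2=Sq^1Sq^2Sq^1$ from Equation~\eqref{eq2} and $Sq^1Sq^2Sq^1Sq^2=Sq^2Sq^1Sq^2Sq^1$ from Equation~\eqref{eq4}) produces, in both subcases $\sigma\equiv 0$ and $\sigma\equiv 2\pmod 4$, an expression whose image under $\Theta$ contains a factor of $Sq^2Sq^1Sq^2$ acting on the $\Upsilon_1\otimes N_{\sigma+1}$-factor; Lemma~\ref{lemF2map} is the key algebraic input, since it transfers $Sq^2Sq^1Sq^2$ from the $N$-factor across the tensor to the $M$-factor. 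The resulting output is $b_*(Sq^2Sq^1Sq^2\,m)\cdot t_{s-\sigma-1}$, which is precisely the cocycle representative of $h_0^{s-\sigma-1}x_3^{\sigma+2}[b_*Sq^2Sq^1Sq^2]$. The main obstacle will be the careful bookkeeping of the diagonal Cartan expansion on the triple tensor product, in particular verifying that all cross-terms involving $Sq^i m$ for $i<3$ either cancel or combine correctly into the single $Sq^2Sq^1Sq^2$ contribution isolated by Lemma~\ref{lemF2map}.
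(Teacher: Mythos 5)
Your outline follows the same exact-couple zig-zag that the paper uses, with the same characters ($\varphi_b^\sigma\cdot t_{s-\sigma}$, $r_s$, $t_s$, the $A\oplus B$ splitting, the identity $f_{s-\sigma}(r_{s-\sigma-1})=t_{s-\sigma}$, Lemma~\ref{lemF2map}). However, there are two genuine gaps, and they are precisely the two places you flag as ``bookkeeping.''

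First, your final step invokes Lemma~\ref{lemF2map} to pull $Sq^2Sq^1Sq^2$ across the tensor \emph{inside the argument of $\Theta$}. But $\Theta$ lands in $I_{s-\sigma-1}$, not in a suspension of $\FF_2$; the proof of Lemma~\ref{lemF2map} uses in an essential way that $Sq^1$ and $Sq^2$ act trivially on the target, so it simply does not apply to $\Theta$. The paper applies Lemma~\ref{lemF2map} \emph{one stage earlier}, to $\varphi_b^\sigma$ itself (which does factor through $\Sigma^{|b|}\FF_2$), while determining the constraint on $\widetilde{\Phi}$ -- not while evaluating the lifted map.

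Second, and more importantly, the ``careful bookkeeping of the diagonal Cartan expansion'' is not a routine afterthought: it is exactly where the proof lives, and your proposal does not carry it out. The paper sidesteps the expansion entirely by observing that $\partial_{\sigma+2}(1\otimes x_3^{\sigma+2}) = Sq^1(y)$ for a specific $y\in\Upsilon_1\otimes N_{\sigma+1}$ satisfying $\partial_{\sigma+1}(y)=Sq^2Sq^1Sq^2(1\otimes x_3^\sigma)$ (checking both residues of $\sigma$ mod $4$, using equations~\eqref{eq2} and~\eqref{eq4}). Since one may take $Sq^1 m=0$ for $m\in\CB$, $Sq^1 y\otimes m=Sq^1(y\otimes m)$ has \emph{no} Cartan cross-terms, so $\Phi(\partial_{\sigma+2}(1\otimes x_3^{\sigma+2})\otimes m)=Sq^1\widetilde{\Phi}(y\otimes m)$, and $\widetilde{\Phi}(y\otimes m)$ is pinned down uniquely by $f_{s-\sigma}\widetilde{\Phi}(y\otimes m)=(\varphi_b^\sigma\cdot t_{s-\sigma})(Sq^2Sq^1Sq^2(1\otimes x_3^\sigma)\otimes m)$ together with injectivity of $f_{s-\sigma}$ in that degree. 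This is the key idea missing from your sketch: locate the one element $y\otimes m$ whose image under $\widetilde{\Phi}$ determines the answer, choose $m$ so that $Sq^1 m=0$, and never expand a Cartan formula at all. (If you insist on evaluating $\Theta$ on the $B_{\sigma+1}$-generator and then applying $\partial_{\sigma+2}$ directly, the cross-terms do eventually collapse to the same answer after using $Sq^1Sq^2Sq^1 r_s = 0$ and $Sq^2 Sq^2 m = Sq^1Sq^2Sq^1 m = 0$, but this is several careful steps you have left unwritten, and they are not ``immediate.'')

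Finally, a caution on your construction of $\Theta$ by ``replacing $t_{s-\sigma}$ by $r_{s-\sigma-1}$ in the formula for $\Phi\circ\partial_{\sigma+1}$'': the constraint $f_{s-\sigma}\circ\Theta=\Phi\circ\partial_{\sigma+1}$ only determines $\Theta$ in degrees where $f_{s-\sigma}$ is injective, and checking that the recipe extends to an $\CA(1)$-map requires more than the change-of-rings identification you name; the paper avoids having to make this check globally by working with a single well-chosen element.
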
  

\begin{note}
In the case where $M$ is a coalgebra, the proof of this formula is simplified by the use of the Leibniz rule.  
\end{note}

\begin{proof}

Consider some nonzero $h_0^{s-\sigma} x_3^{\sigma}[b_*]  \in H_\bullet (M_*; Q_0) \otimes \FF_2[h_0^{\pm 1}, x_3^2]$.  Then $\sigma$ is even and $Sq^1 b_* = 0 = Sq^1 b$.  If $s\geq \sigma$, the class $h_0^{s-\sigma} x_3^{\sigma} [b_*]$  is represented by \[\varphi_b^{\sigma} \cdot t_{s-\sigma}: \Upsilon_1 \otimes N_\sigma \otimes M \to I_{s-\sigma}.\]  We unravel the exact couple:
\[\adjustbox{scale=0.93}{\begin{tikzcd}[column sep=tiny]
  \Ext_{\CA(1)}^{s-\sigma-1,\bullet}\left(\im(\partial_{\sigma+2}) \otimes M, \FF_2 \right) \ar[r, "\delta"] \ar[d, "j"]  & \Ext_{\CA(1)}^{s-\sigma,\bullet}\left(\im(\partial_{\sigma+1}) \otimes M, \FF_2\right) &  \\
\Ext_{\CA(1)}^{s-\sigma-1,\bullet}\left(\Upsilon_1 \otimes N_{\sigma+2} \otimes M, \FF_2 \right)    & &  \Ext_{\CA(1)}^{s-\sigma,\bullet}\left(\Upsilon_1 \otimes N_{\sigma} \otimes M, \FF_2 \right) \ar[ul, "i"] \ar[ll, dashed, "d_2"]
\end{tikzcd}}\]  

The map $i$ in the exact couple is just precomposition by the inclusion of $\im(\partial_{\sigma+1}) \otimes M$ into $ \Upsilon_1 \otimes N_\sigma \otimes M$.  We will refer to the composition of $\varphi_b^\sigma \cdot t_{s-\sigma}$ with this inclusion as $\varphi_b^\sigma \cdot t_{s-\sigma}$ as well.  

Then, for any map $\Phi:\im(\partial_{\sigma+2}) \otimes M \to I_{s-\sigma}$ such that $\delta[\Phi] = [\varphi_b^{\sigma} \cdot t_{s-\sigma}] $, $d_2[\varphi_b^\sigma \cdot t_{s-\sigma}] = j[ \Phi]$ where $j$ is precomposition with $\partial_{\sigma+2} \otimes \text{id}_M$.  We will not give a full description of $\Phi$.  Instead, we will use the fact that $\{[\varphi_m^{\sigma+2} \cdot t_{s-\sigma}] | m \in \CB\} $ is a basis for $h_0^{\pm 1} \Ext_{\CA(1)}^{s-\sigma, \bullet}(\Upsilon_1 \otimes N_{\sigma+2} \otimes M, \FF_2)$.  Since $\varphi_m^{\sigma+2}$ is the unique $\CA(1)$-map with $\varphi_m^{\sigma+2}(1 \otimes a \otimes n)=(x_3^{\sigma+2})_*(a)m_*(n)$, it is sufficient to determine the value of $j[\Phi]$ on terms of the form $1 \otimes x_3^{\sigma+2} \otimes m$ for $m \in \CB$.   

Note that $j[\Phi](1 \otimes x_3^{\sigma+2} \otimes m)=\Phi(\partial_{\sigma+2}(1 \otimes x_3^{\sigma+2}) \otimes m)$.  The expression $\partial_{\sigma+2}(1 \otimes x_3^{\sigma+2})$ depends on the equivalence class of $\sigma$ modulo four:
\[ \partial_{\sigma+2} (1 \otimes x_3^\sigma) = \begin{cases} Sq^1 \left( Sq^2 Sq^1 (1 \otimes x_2^{\sigma+1})\right) & \sigma \equiv 0 \pmod{4} \\ 
Sq^1 \left( Sq^2 Sq^1 Sq^2 (1 \otimes x_2^{\sigma+1})\right)  & \sigma \equiv 2 \pmod{4}
\end{cases} \]
In either case,
\( \partial_{\sigma+2}(1 \otimes x_3^\sigma) = Sq^1 \left( y \right) \) where $\partial_{\sigma+1}(y) = Sq^2 Sq^1 Sq^2 (1 \otimes x_3^\sigma)$. This will be enough to compute $j[\Phi](1 \otimes x_3^{\sigma+2} \otimes m)$.

The connecting homomorphism, $\delta$, is constructed by applying the Snake Lemma to the diagram below:
\[ \adjustbox{scale=0.95}{\begin{tikzcd}[column sep=tiny]
\Hom\left(\im(\partial_{\sigma+1}) \otimes M, I_{s-\sigma-1}\right) \ar[r] \ar[d] & \Hom\left(\Upsilon_1 \otimes N_{\sigma+1} \otimes M, I_{s-\sigma-1}\right) \ar[r] \ar[d] & \Hom\left(\im(\partial_{\sigma+2}) \otimes M, I_{s-\sigma-1}\right) \ar[d] \\
\Hom\left(\im(\partial_{\sigma+1}) \otimes M, I_{s-\sigma}\right) \ar[r] & \Hom\left( \Upsilon_1 \otimes N_{\sigma+1} \otimes M, I_{s-\sigma}\right) \ar[r] & \Hom\left(\im(\partial_{\sigma+2}) \otimes M, I_{s-\sigma}\right)
\end{tikzcd}}\]
where $\Hom(A,B)=\Hom_{\CA(1)}^{\bullet}(A,B)$.

Fix any $\Phi \in \Hom_{\CA(1)}^\bullet(\im(\partial_{\sigma+2}) \otimes M, I_{s-\sigma-1})$ whose image under the connecting homomorphism is $\varphi_b^{\sigma} \cdot t_{s-\sigma} \in \Hom_{\CA(1)}^\bullet(\im(\partial_{\sigma+1}) \otimes M, I_{s-\sigma})$.  Let $\wt{\Phi} \in \Hom_{\CA(1)}^\bullet(\im(\Upsilon_1 \otimes N_{\sigma+1} \otimes M, I_{s-\sigma-1})$ be a lift of $\Phi$.  Then
\begin{align*}
    \Phi(\partial_{\sigma +2}(1 \otimes x_3^{\sigma+2}) \otimes m) &= \wt{\Phi} (\partial_{\sigma +2}(1 \otimes x_3^{\sigma+2}) \otimes m) = \wt{\Phi} (Sq^1 (y \otimes m)) = Sq^1 \wt{\Phi}(y \otimes m) 
\end{align*}
where $\partial_{\sigma+1}(y) = Sq^2 Sq^1 Sq^2 (1 \otimes x_3^{\sigma})$.

The bottom left horizontal map \[\Hom_{\CA(1)}^{\bullet}\left(\im(\partial_{\sigma+1}) \otimes M, I_{s}\right) \to  \Hom_{\CA(1)}^{\bullet}\left( \Upsilon_1 \otimes N_{\sigma+1} \otimes M, I_{s}\right)\] is given by precomposition with $\partial_{\sigma+1} \otimes \text{id}_M$. The center vertical map \[\Hom_{\CA(1)}^{\bullet}\left( \Upsilon_1 \otimes N_{\sigma+1} \otimes M, I_{s-\sigma-1}\right)\to  \Hom_{\CA(1)}^{\bullet}\left( \Upsilon_1 \otimes N_{\sigma+1} \otimes M, I_{s-\sigma}\right)\] is given by composition with $f_{s-\sigma}: I_{s-\sigma-1} \to I_{s-\sigma}$.  
Hence,
\begin{align*}
    f_{s-\sigma}\left(\wt{\Phi} (y \otimes m)\right) &= (\varphi_b^\sigma \cdot t_{s-\sigma})\left( \partial_{\sigma+1}(y) \otimes m \right) = (\varphi_b^\sigma \cdot t_{s-\sigma})\left( Sq^2 Sq^1 Sq^2 (1 \otimes x_3^\sigma) \otimes m \right).
\end{align*}
Lemma ~\ref{lemF2map} then implies 
\[f_{s-\sigma}\left(\wt{\Phi} (y \otimes m)\right)=({\varphi}_b^\sigma \cdot t_{s-\sigma})\left( 1 \otimes x_3^{\sigma} \otimes Sq^2 Sq^1 Sq^2 m \right),\]
since $\varphi_b^\sigma \cdot t_{s-\sigma}$ factors through $\FF_2$.
So, \begin{align*}
    f_{s-\sigma}\left(\wt{\Phi} (y \otimes m)\right) &=  b_*(Sq^2 Sq^1 Sq^2 m) \cdot t_{s-\sigma} \\
    &= \big((b_*) Sq^2 Sq^1 Sq^2 \big)(m) \cdot f_{s-\sigma} (r_{s-\sigma-1})
\end{align*}
and, since $f_{s-\sigma}$ is injective in degree $|t_{s-\sigma}|$,
\[ \wt{\Phi}(y \otimes m) = \big((b_*) Sq^2 Sq^1 Sq^2 \big)(m) \cdot r_{s-\sigma-1}. \]
Then,
\begin{align*}
    \Phi(\partial_{\sigma+2}(1 \otimes x_3^{\sigma+2}) \otimes m) &= Sq^1 \wt{\Phi}(y \otimes m) \\
    &= Sq^1 \left( \big((b_*)Sq^2 Sq^1 Sq^2 \big)(m) \cdot r_{s-\sigma -1} \right) \\
    &= \big((b_*)Sq^2 Sq^1 Sq^2 \big)(m) \cdot t_{s-\sigma-1} \\
    &= \varphi_{\left((b_*)Sq^2 Sq^1 Sq^2   \right)_*}^{\sigma+2}(1 \otimes x_3^{\sigma+2} \otimes m) \cdot t_{s-\sigma-1}.
\end{align*}
Therefore, $d_2(h_0^{s-\sigma} x_3^\sigma [b_*])=h_0^{s-\sigma-1} x_3^{\sigma+2}[(b_*)Sq^2 Sq^1 Sq^2 ]$.
\end{proof}

Heuristically, the second differential pairs $Q_0$-homology classes that are connected by a 1-seagull in $M_*$.  We conjecture that the $2n^{th}$ differential pairs classes connected by an $n$-seagull. 

\begin{conj}\label{conjhigherd}
There exists a differential
\[ d_{2n} ([x_*]h_0^{s-\sigma} x_3^{\sigma})=[y_*]h_0^{s+1-\sigma - 2n} x_3^{\sigma+2n} \]
for nonzero classes $[x_*], [y_*]$ in $H_\bullet (M_*; Q_0)$ if and only if there are elements \[(x_1)_*, \ldots, (x_{n-1})_* \in M_*\] such that 
\begin{align*} (x_*) Sq^2 Sq^1 Sq^2  &= (x_1)_* Sq^1  \\
(x_i)_* Sq^2 Sq^1 Sq^2  &= \begin{cases}
(x_{i+1})_* Sq^1 & i <n-1 \\
y_* & i=n-1.
\end{cases}
\end{align*} 
\end{conj}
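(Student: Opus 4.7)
The plan is to reduce the computation to the known differentials on seagull modules using the $Q_0$-localization and the classification theorem. First, I would note that the $h_0$-localized Davis--Mahowald spectral sequence for $M$ agrees with that of $L_0 M = \Upsilon_\infty \otimes M$: both have $E_1$-page depending only on $Q_0$-homology (which agrees on $M$ and $L_0 M$) and both converge to $h_0^{-1}\Ext_{\CA(1)}^{\bullet,\bullet}(M,\FF_2)$. By Theorem~\ref{thmclassification}, $L_0 M$ is stably equivalent to a flock of seagulls $\bigoplus_i \Sigma^{\alpha_i}\Upsilon_{n_i}$, and since stable equivalence preserves the DMSS, the differentials for $M$ split as a direct sum of the differentials for each seagull summand.

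Next, I would invoke the computation of all differentials in the $h_0$-localized DMSS for an individual seagull $\Upsilon_n$ (the Proposition~\ref{propSpSqSeagull} cited earlier in the introduction). For $n<\infty$, the $Q_0$-homology of $\Upsilon_n$ has exactly two generators (bottom and top, separated in degree by $4n+1$) and the first nonzero differential is a $d_{2n}$ pairing them; for $\Upsilon_\infty$ the $Q_0$-homology is one-dimensional and no differentials appear. Therefore the nonzero $d_{2n}$-differentials in the SS for $M$ are in bijection with the finite $n$-seagull summands of $L_0 M$, and each such summand contributes exactly one pair of $Q_0$-homology classes of $M_*$ that are paired by $d_{2n}$.

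The heart of the argument is then to show that a pair $([x_*],[y_*])$ of $Q_0$-homology classes in $H_\bullet(M_*;Q_0)$ comes from an $n$-seagull summand of $L_0 M$ if and only if there is a chain $(x_1)_*, \ldots, (x_{n-1})_*$ in $M_*$ satisfying the relations in the conjecture. The defining relations of an $n$-seagull, $Sq^1 y_{4j} = Sq^2 Sq^1 Sq^2 y_{4(j-1)}$, dualize to exactly those chain relations after tracking the seagull generators through the identification $L_0 M = \Upsilon_\infty \otimes M$ and choosing compatible representatives in $M_*$. The ``only if'' direction is thus a bookkeeping exercise, extracting the chain from the inductive construction of Lemma~\ref{leminductive} applied to $L_0 M$ and dualizing.

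The hard part will be the ``if'' direction: given such an algebraic chain in $M_*$, showing that $[x_*]$ and $[y_*]$ arise as the two $Q_0$-homology generators of a \emph{single} $n$-seagull summand of $L_0 M$, rather than being distributed across several summands of different lengths whose $d_{2n}$-contributions could cancel. I expect this to follow by lifting the chain to a dualized map $\Sigma^{|x|}\Upsilon_n \hookrightarrow L_0 M$ which splits off as a direct summand, along the lines of the 5-lemma arguments in Section~\ref{secfinitesg}, together with the maximum-length property from the classification. A secondary subtlety is that the intermediate $(x_i)_*$ are not unique, so one must check the resulting $[y_*]$ is well defined modulo classes already hit by some $d_{2m}$ with $m<n$; this should come out of the same induction.
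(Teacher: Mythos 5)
This statement is labeled a \emph{Conjecture} in the paper (Conjecture~\ref{conjhigherd}); the paper does not prove it, and in fact explicitly explains why the methods used for $d_2$ do not obviously extend (the lift $\wt{\Phi}$ along $\delta$ is only known to factor through $\FF_2$ in the first step). So there is no paper proof against which to compare your proposal, only a heuristic. Your proposal must therefore be judged on whether it could plausibly close the open problem.

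Your overall strategy --- pass to $L_0 M = \Upsilon_\infty \otimes M$, invoke the classification theorem to decompose it into seagulls, and read off all differentials from Proposition~\ref{propSpSqSeagull} --- is exactly what the paper acknowledges works ``in theory'' but calls labor intensive. So the strategy is not wrong, but the genuine content of the conjecture is to \emph{avoid} computing that decomposition, and your argument would have to show that the decomposition is equivalent to the explicit chain condition in $M_*$. You describe that equivalence as ``a bookkeeping exercise, extracting the chain from the inductive construction of Lemma~\ref{leminductive} applied to $L_0 M$ and dualizing,'' and this is where I think the proposal is substantially incomplete. The conjecture states a condition purely in terms of elements $(x_i)_* \in M_*$, but the seagull generators produced by the inductive construction live in $\Upsilon_\infty \otimes M$ with its \emph{diagonal} $\CA(1)$-action; they are linear combinations of tensors whose projections to the $M$-factor have no reason to satisfy the exact one-seagull relations $(x_i)_* Sq^2 Sq^1 Sq^2 = (x_{i+1})_* Sq^1$ after dualizing. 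Making the translation precise --- in particular showing that the chain condition in $M_*$ is both necessary and sufficient for the corresponding classes to sit on a single $n$-seagull summand of $\Upsilon_\infty \otimes M$ --- is, as far as I can tell, the entire difficulty, not a side computation. You correctly flag the ``if'' direction and the nonuniqueness of the $(x_i)_*$ as subtle, but even the ``only if'' direction is not a formality for the reason above.

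There is also a smaller technical point you should address up front: to identify the localized DMSS for $M$ with that for $L_0 M$, it is not enough that both $E_1$-pages are abstractly isomorphic and both converge to the same thing. You need to check that the $\CA(1)$-map $\Upsilon_\infty \otimes M \to M$ (or some comparison map) induces an isomorphism on the localized $E_1$-page; this is plausible since $L_1 M$ has trivial $Q_0$-homology, but it is a statement about dual modules $(L_0 M)_*$, which are large, and should be spelled out. Likewise, ``stable equivalence preserves the DMSS'' needs a one-line justification (a free summand contributes zero after $h_0$-localization), which you implicitly use but do not state.

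In short: the approach is a reasonable attack on an open conjecture, but the step you label bookkeeping is the heart of the problem, and as written the proposal does not contain an argument for it.
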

The difficulty in using exactly the methods used for $d_2$ to compute higher differentials is that we only computed lifts along $\delta$ for maps that factor through $\FF_2$.  There is no guarantee that such a lift also factors through $\FF_2$ if its composition with $j$ is zero.  

\begin{exmp}\label{exmponesg}
Let $\ell$ denote the generator of $\Upsilon_1$ and $u$ denote $Sq^2 Sq^1 Sq^2 \ell$.
Then in $M_*$, $(u_*)Sq^2 Sq^1 Sq^2 =\ell_*$, so the spectral sequence associated to $\Upsilon_1$ has a differential, 
\[ d_2\left(h_0^{s-\sigma} x_3^{\sigma}[u_*]\right) = h_0^{s-\sigma-1}x_3^{\sigma+2} [\ell_*]  \]
For degree reasons, all higher differentials are zero, so the spectral sequence collapses.  The $E_2$-page of this spectral sequence is depicted in Figure~\ref{figSpSq}.
\begin{figure}[ht!] 
\begin{tikzpicture}[scale=0.7]
\foreach \y in {-1, 1, 2, 3, 4, 5}{
\draw (-.2, \y)--(.2, \y);
\node[left] at (-.2, \y) {\tiny \y};
}
\draw[<->] (-0.5, 0)--(11,0);
\draw[<->] (0, -2.5)--(0,6);
\foreach \x in {0,5}{
\draw[red] (\x, -1.5)--(\x, 5.5);
\foreach \y in {-1.75, -2, -2.25, 5.75, 6, 6.25}{\fill[red] (\x, \y) circle (1pt);}
\foreach \y in { -1, ..., 5}{
\fill[red] (\x,\y) circle (3pt);
}
}
\foreach \x in {4,9}{
\draw[orange] (\x, -1.5)--(\x, 5.5);
\foreach \y in {-1.75, -2, -2.25, 5.75, 6, 6.25}{\fill[orange] (\x, \y) circle (1pt);}
\foreach \y in { -1, ..., 5}{
\fill[orange] (\x-.1,\y-.1) rectangle (\x+.1, \y+.1);
}
}
\foreach \x in {8}{
\draw[black!40!green] (\x, -1.5)--(\x, 5.5);
\foreach \y in {-1.75, -2, -2.25, 5.75, 6, 6.25}{\fill[black!40!green] (\x, \y) circle (1pt);}
\foreach \y in { -1, ..., 5}{
\fill[black!40!green] (\x-.15,\y-.15)--(\x, \y+.15)--(\x+.15, \y-.15);
}
}
\foreach \x in {4,8}{
\foreach \y in {-1, 0, ..., 4} {
\draw[->] (\x+.85, \y+.15)--(\x+.15, \y+.85);}
}
\foreach \x in {1, 2, 3, 6, 7, 10}{
\draw (\x, -.1)--(\x, .1);
\node[below] at (\x, -.1) {\tiny \x};
}
\node at (11.6, 0) {\small t-s};
\node at (-1.5, 3) {\small s};
\node[red] at (.4, .3) {$\ell_*$};
\node[red] at (5.4, .3) {$u_*$};
\node[orange] at (3.3,2) {$x_3^2 \ell_*$};
\node[orange] at (9.8, 2) {$x_3^2 u_*$};
\node[black!40!green] at (7.3, 4) {$x_3^4 \ell_*$};
\end{tikzpicture}
\caption{The $h_0$-localized Davis--Mahowald spectral sequence for $\Upsilon_1$.  The classes marked with a red circle are in degree $\sigma=0$, the classes marked with an orange square are in degree $\sigma=2$, and the classes marked with a green triangle are in degree $\sigma=4$.}
\label{figSpSq}
\end{figure}
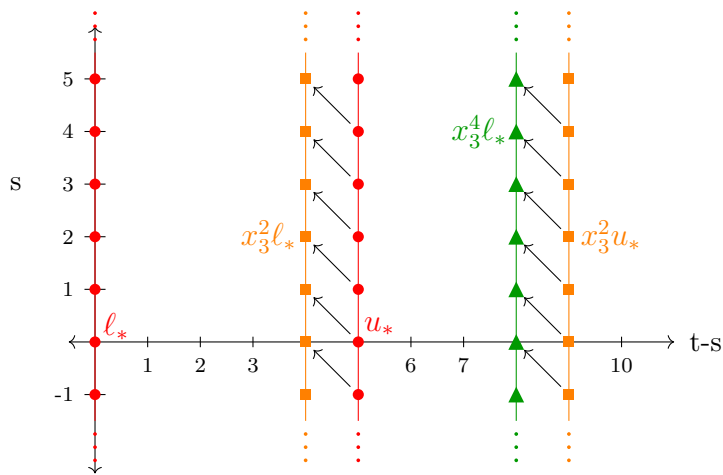
\end{exmp}
Even without a formula for the higher differentials, all differentials can be computed in the spectral sequence associated to a seagull module of any length.  For the infinite seagull module, there is no room for any nonzero differentials.  For a finite seagull, there is a single nonzero differential. 
\begin{prop}\label{propSpSqSeagull}
For any finite $n$, the spectral sequence
\[ \FF_2 [h_0^{\pm 1}, x_3^2] \otimes H_\bullet (\Upsilon_n;Q_0)  \Rightarrow h_0^{-1} \Ext_{\CA(1)}^{\bullet, \bullet}(\Upsilon_n, \FF_2) \]
has precisely one nonzero differential, $d_{2n}$.
\end{prop}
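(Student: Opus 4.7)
My plan combines a parity/degree analysis of the possible differentials with an independent computation of $h_0^{-1}\Ext_{\CA(1)}(\Upsilon_n, \FF_2)$; the nontriviality of $d_{2n}$ will then follow from a tower count.

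First, I will identify the $E_1$-page. By Theorem~\ref{thmE1} it is $\FF_2[h_0^{\pm 1}, x_3^2] \otimes H_\bullet((\Upsilon_n)_*; Q_0)$, and a direct check gives $H_\bullet((\Upsilon_n)_*; Q_0) = \FF_2\{[\ell_*]\} \oplus \Sigma^{4n+1}\FF_2\{[u_*]\}$ where $\ell_* = (y_0)_*$ and $u_* = (Sq^2 Sq^1 Sq^2 y_{4(n-1)})_*$. The $[\ell_*]$-summand occupies tridegrees with $t-s \in 4\ZZ_{\geq 0}$, while the $[u_*]$-summand occupies $t-s \in 4n+1 + 4\ZZ_{\geq 0}$. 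Since $d_r$ changes $(\sigma, s, t-s)$ by $(r, 1, -1)$, residues modulo $4$ forbid differentials within a single summand, a differential from the $[\ell_*]$-summand to the $[u_*]$-summand would force negative $r$, and the only degree-consistent $[u_*] \to [\ell_*]$ differential has $r = 2n$. Thus $d_{2n}$ is the unique candidate for a nonzero differential.

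Next, I will compute $h_0^{-1}\Ext_{\CA(1)}(\Upsilon_n, \FF_2)$ by induction on $n$. Example~\ref{exmponesg} handles the base case $n=1$, yielding a single $h_0^{\pm 1}$-tower at $t-s = 0$. For $n \geq 2$, sending $y_{4k} \mapsto y_{4k}$ for $0 \leq k \leq n-2$ realizes $\Upsilon_{n-1}$ as a submodule of $\Upsilon_n$ whose quotient is $\Sigma^{4(n-1)}\Upsilon_1$: since $Sq^1 y_{4(n-1)} = Sq^2 Sq^1 Sq^2 y_{4(n-2)}$ lies in the submodule, the class $y_{4(n-1)}$ becomes a bottom seagull class in the quotient. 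Applying $\Ext_{\CA(1)}(-, \FF_2)$ to $0 \to \Upsilon_{n-1} \to \Upsilon_n \to \Sigma^{4(n-1)}\Upsilon_1 \to 0$ gives a long exact sequence whose connecting map $\delta$ sends classes at $t-s \in \{0, 4, \ldots, 4(n-2)\}$ to classes at $t-s = 4n-3$; since $4n-3 \not\equiv 0 \pmod 4$ these values are disjoint, so $\delta = 0$ and the sequence splits into short exact ones. A dimension count then yields $h_0^{-1}\Ext_{\CA(1)}(\Upsilon_n, \FF_2) \cong \bigoplus_{k=0}^{n-1}\FF_2[h_0^{\pm 1}]$, with the $k$-th summand at $t-s = 4k$.

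Finally, the $E_1$-page has infinitely many $h_0^{\pm 1}$-towers while $E_\infty$ has exactly $n$, so $d_{2n}$ must be nonzero. Because $h_0$ and $x_3^2$ are permanent cycles in the Davis--Mahowald spectral sequence for $\FF_2$ (each potential target lies at an absent $t-s$ value), $d_{2n}$ is $\FF_2[h_0^{\pm 1}, x_3^2]$-linear when viewed as a differential in the module spectral sequence for $\Upsilon_n$, and its non-vanishing is detected on $d_{2n}([u_*])$; the tower count then forces $d_{2n}$ to kill every $[u_*]$-tower class along with the $[\ell_*]$-classes at $t-s \geq 4n$, leaving precisely the $n$ surviving towers predicted by the direct computation. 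I expect the main obstacle to be cleanly justifying the vanishing of $\delta$ and carrying out the ensuing dimension count inside the long exact sequence; beyond that, the argument is a routine size comparison.
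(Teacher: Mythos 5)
Your proposal is correct and reaches the same conclusion as the paper's proof, but by a somewhat different route in the key supporting computation.  Both arguments share the structure: (1) a parity analysis shows $d_{2n}$ is the only possible nonzero differential, and (2) a comparison of $E_1$ with $E_\infty$ forces $d_{2n}$ to be nonzero.  You give the parity analysis in more detail than the paper (which just observes that the degree shift is $t-s = -1$ and reads off the filtration jump from the tower positions).

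The substantive divergence is in computing $h_0^{-1}\Ext_{\CA(1)}^{\bullet,\bullet}(\Upsilon_n, \FF_2)$.  The paper asserts this ``via constructing a minimal free resolution of $\Upsilon_n$'' without spelling it out, while you build it inductively from the short exact sequence $0 \to \Upsilon_{n-1} \to \Upsilon_n \to \Sigma^{4(n-1)}\Upsilon_1 \to 0$, with the base case taken from Example~\ref{exmponesg}.  Your vanishing argument for $\delta$ (source at $t-s \equiv 0 \bmod 4$, target forced to $t-s = 4n-3 \equiv 1 \bmod 4$) is correct and gives a clean, self-contained account of the $E_\infty$-page; this is a genuine improvement in transparency over the paper's terse assertion.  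One small caveat: the long exact sequence is in $\Ext$, and you should invert $h_0$ before invoking the inductive description of the three terms -- exactness of localization makes this fine, but it is worth saying.

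Your final paragraph is correct but does more than is needed.  Once you know $d_{2n}$ is the only candidate and $E_1 \neq E_\infty$, non-vanishing of $d_{2n}$ follows immediately; the statement only asserts that $d_{2n}$ is the unique nonzero differential, not that it is an isomorphism of towers.  If you do want the stronger fact (that every $u_*$-class supports a differential and every $\ell_*$-class at $t-s \geq 4n$ is hit), a pointwise dimension count suffices: at each $(s, t-s)$ with $t-s \geq 4n$, the $E_1$-term is at most one-dimensional and must vanish at $E_\infty$, and the only incoming and outgoing maps both occur via $d_{2n}$, so each instance of $d_{2n}$ in that range must be an isomorphism.  This avoids appealing to a module-spectral-sequence structure over the spectral sequence for $\FF_2$, which, while standard, is not established in the paper (the Note after Theorem~\ref{thmE1} only addresses the coalgebra case).
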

\begin{proof}
It's quick to show via constructing a minimal free resolution of $\Upsilon_n$ that, as an $\FF_2$-vector space, \[h_0^{-1}\Ext_{\CA(1)}^{\bullet, \bullet}(\Upsilon_n, \FF_2) \cong \Ext_{\CA(1)}^{\bullet, \bullet}(\Upsilon_n, \FF_2) \cong \FF_2[h_0^{\pm 1}, x_3^2]/x_3^{2(n+1)}.\]
So, the $E_\infty$-page is comprised of $h_0$-towers in degrees \[t-s=0,4,8, \ldots, 4(n-1).\]
The $Q_0$-homology of $\Upsilon_n$ is $\FF_2\{\ell, u\}$ where $|\ell|=0$ and $|u|=4n+1$.  So, in each even filtration degree, $\sigma=m$, the $E_1$-page has $h_0$-towers in degrees $t-s=2m$ and \\ $t-s=2m+4n+1$. (The $E_1$-page is zero in each odd filtration degree.)  So, the $E_1$-page is comprised of $h_0$-towers in degrees
\[ t-s = 0, 4, 8, \ldots, 4(n-1), \ 4n, \ 4n+1,\  4(n+1),\  4(n+1)+1, \ldots \]
Any differential in this spectral sequence has degree $t-s=-1$.
 So, in order for the towers in degrees $t-s>4(n-1)$ to be eliminated, there must be a differential from each tower in degree $2m+4n+1$ to the tower in degree $2m+4n=2(2n+m)$.  The tower in degree $2m+4n+1$ is in filtration degree $\sigma=m$ and the tower in degree $2(2n+\sigma)$  is in filtration degree $\sigma=2n+m$, so this is a $d_{2n}$ differential.  
\end{proof}

For any bounded below, $Q_0$-local $\CA(1)$-module of finite type, $M$, Theorem~\ref{thmclassification} tells us $M$ is isomorphic to a flock of seagulls. If we can identify that flock of seagulls, we can then compute all differentials in the spectral sequence associated to $M$.  In fact, if $M$ is an arbitrary $\CA(1)$-module, the $Q_0$-
localization of $M$ can be used to compute $h_0^{\pm 1}\Ext_{\CA(1)}^{\bullet, \bullet}(M, \FF_2)$.  

\begin{lemma}\label{lemEinf}
The map $L_0 M \to M$ induces an isomorphism,
\[ h_0^{-1}\Ext_{\CA(1)}^{\bullet, \bullet}(M, \FF_2) \cong h_0^{-1}\Ext_{\CA(1)}^{\bullet, \bullet}(L_0 M, \FF_2) .\]
\end{lemma}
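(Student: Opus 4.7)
The plan is to use the defining triangle of the $Q_0$-localization, $L_0 M \to M \to L_1 M \to L_0 M[1]$, from Bruner's work, and to show that $h_0^{-1}\Ext_{\CA(1)}^{\bullet,\bullet}(L_1 M, \FF_2) = 0$. Once that vanishing is established, the long exact sequence in $\Ext$ associated to this triangle collapses after inverting $h_0$, forcing the map $L_0 M \to M$ to induce the claimed isomorphism.

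To show the vanishing, I would feed $L_1 M$ into the $h_0$-localized Davis--Mahowald spectral sequence. By Theorem~\ref{thmE1}, its $E_1$-page is
\[ \FF_2[h_0^{\pm 1}, x_3^2] \otimes H_\bullet\big((L_1 M)_*; Q_0\big). \]
Since $L_1 M$ is $Q_1$-local, it has trivial $Q_0$-homology, and by the note following Lemma~\ref{lemlocalDM} we have $H_\bullet((L_1 M)_*; Q_0) \cong H_\bullet(L_1 M; Q_0) = 0$. Hence the $E_1$-page is zero, so $h_0^{-1}\Ext_{\CA(1)}^{\bullet,\bullet}(L_1 M, \FF_2) = 0$. (Equivalently, one can invoke Lemma~\ref{lemA0} directly to see that each $E_1$-term $h_0^{-1}\Ext_{\CA(0)}^{\bullet,\bullet}(N_\sigma \otimes L_1 M, \FF_2)$ vanishes, since $N_\sigma \otimes L_1 M$ has trivial $Q_0$-homology whenever $L_1 M$ does.)

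The triangle $L_0 M \to M \to L_1 M \to L_0 M[1]$ yields a long exact sequence
\[ \cdots \to \Ext_{\CA(1)}^{s-1,t}(L_0 M, \FF_2) \to \Ext_{\CA(1)}^{s,t}(L_1 M, \FF_2) \to \Ext_{\CA(1)}^{s,t}(M, \FF_2) \to \Ext_{\CA(1)}^{s,t}(L_0 M, \FF_2) \to \cdots \]
which is a sequence of modules over $\Ext_{\CA(1)}^{\bullet,\bullet}(\FF_2, \FF_2)$, and so remains exact after inverting the central element $h_0$. Substituting $h_0^{-1}\Ext_{\CA(1)}^{\bullet,\bullet}(L_1 M, \FF_2) = 0$ at both adjacent terms, the connecting maps become isomorphisms
\[ h_0^{-1}\Ext_{\CA(1)}^{\bullet,\bullet}(M, \FF_2) \xrightarrow{\ \cong\ } h_0^{-1}\Ext_{\CA(1)}^{\bullet,\bullet}(L_0 M, \FF_2), \]
which is induced by $L_0 M \to M$. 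The main thing to be careful about is the identification of the $Q_0$-homology of $L_1 M$ and the use of the shearing/change-of-rings isomorphism relating $H_\bullet(L_1 M;Q_0)$ to the dualized version used in Theorem~\ref{thmE1}; beyond this, everything is formal from the localization triangle and the spectral sequence already constructed.
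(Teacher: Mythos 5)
Your proof is correct and follows the same overall structure as the paper's: both start from the localization triangle $L_0 M \to M \to L_1 M$, pass to the long exact sequence in $\Ext$, invert $h_0$ (which is exact), and reduce the claim to the vanishing $h_0^{-1}\Ext_{\CA(1)}^{\bullet,\bullet}(L_1 M, \FF_2) = 0$. Where you diverge is in how that vanishing is obtained. The paper invokes Adams' vanishing theorem directly: since $L_1 M$ is $Q_1$-local, it is free over $\CA(0)$, and being bounded below it satisfies a vanishing line that kills all infinite $h_0$-towers. You instead run $L_1 M$ through the $h_0$-localized Davis--Mahowald spectral sequence and observe that the $E_1$-page vanishes because $H_\bullet(L_1 M; Q_0) = 0$ (hence $N_\sigma \otimes L_1 M$ has trivial $Q_0$-homology for all $\sigma$, by the K\"unneth theorem). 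Both routes work. The paper's is shorter and self-contained, appealing to a standard theorem with no spectral sequence overhead; yours stays inside the Davis--Mahowald framework of the section, which is aesthetically pleasing, but it does lean on the convergence assertion built into Lemma~\ref{lemlocalDM} (that the $h_0$-localized spectral sequence really does abut to $h_0^{-1}\Ext_{\CA(1)}$), which the paper states but does not verify in detail. One small wording issue: the maps that become isomorphisms in the long exact sequence are the maps $h_0^{-1}\Ext(M) \to h_0^{-1}\Ext(L_0 M)$ induced by $L_0 M \to M$, not the connecting maps; you identify the right map but mislabel it.
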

\begin{proof}
The triangle
\[ L_0 M \to M \to L_1 M \]
induces the long exact sequence 
\[\begin{tikzcd}[column sep=tiny] \cdots \ar[r] & 
h_0^{-1}\Ext_{\CA(1)}^{s,t}\left(L_1 M, \FF_2\right) \arrow[r]& h_0^{-1}\Ext_{\CA(1)}^{s,t}\left(M, \FF_2 \right) \arrow[r]\arrow[d,phantom, ""{coordinate, name=Z}]& h_0^{-1}\Ext_{\CA(1)}^{s,t}\left(L_0 M, \FF_2\right)\arrow[dll,rounded corners,to path={ --([xshift=2ex]\tikztostart.east)|- (Z)[near end]\tikztonodes-| ([xshift=-2ex]\tikztotarget.west)-- (\tikztotarget)}]  \\ & h_0^{-1}\Ext_{\CA(1)}^{s+1,t}\left(L_1 M, \FF_2\right)\arrow[r]
& h_0^{-1}\Ext_{\CA(1)}^{s+1,t}\left(M, \FF_2 \right) \arrow[r]
& \cdots  \end{tikzcd}\]
Since $L_1 M$ is $Q_1$-local (equivalently, free as an $\CA(0)$-module) and bounded below, Adams' vanishing theorem \cite[Theorem 2.1]{AdamsPT} implies \[h_0^{-1}\Ext_{\CA(1)}^{\bullet, \bullet}\left(L_1 M, \FF_2 \right)=0. \qedhere\] 
\end{proof}

Consequently, $h_0^{-1}\Ext_{\CA(1)}^{\bullet, \bullet}(M, \FF_2)$ can be computed by identifying the decomposition of $L_0 M = \Upsilon_\infty \otimes M$ into a direct sum of seagulls.  However, computing this decomposition is often difficult.  So, it would be desirable to prove Conjecture~\ref{conjhigherd}, or another way of computing differentials directly from the structure of $M$.

%%%%%%%%%%%%%%%%%%%%%%%%%%%%%%%%%%%%%
\subsection{\texorpdfstring{Lifting $\CA(1)$-Modules to $\CA$-Modules}{Lifting A(1)-modules to A-modules}
}\label{seclifting}
%%%%%%%%%%%%%%%%%%%%%%%%%%%%%%%%%%%%%
For any $\CA(n)$, a subalgebra of $\CA$, there is a forgetful functor from $\CA\Mod$ to $\CA(n)\Mod$.
If $M$ is a $\CA(n)$-module in the image of this functor, we say $M$ lifts to an $\CA$-module or that $M$ has a compatible $\CA$-module structure.  

This forgetful functor only exists because there is an action of $\CA$ on $\CA(n)$ that is compatible with the multiplication in $\CA(n)$. (That is, if $x \in \CA(n)$, acting by $x \in \CA$ on $\CA(n)$ is the same as multiplying by $x \in \CA(n)$.) Lin \cite{LinThesis} calls Hopf subalgebras with this property ``nice Hopf subalgebras''.  In \cite{LinThesis}, he shows the only nice Hopf subalgebras of $\CA$ are of the form $\CA(n)$ for some $n$.  So, if $\CB$ is a Hopf subalgebra of $\CA$ not equal to some $\CA(n)$, there is no such forgetful functor and so the question of lifting $\CB$-modules to $\CA$-modules is not well formed.

The following theorem of Davis and its consequences for computing $h_0^{-1}\Ext$ were pointed out to the author by Michael Hopkins. 
\begin{thm}[Davis]\label{thm:Davis}
For any $\CA$-module, $N$, the $h_0$-towers in $\Ext_{\CA}^{\bullet, \bullet}\left(N, \FF_2 \right)$ are in one-to-one correspondence with a basis for $H_\bullet (N;Q_0)$.  
\end{thm}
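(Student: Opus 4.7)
The plan is to reduce the statement over the full Steenrod algebra $\CA$ to the $\CA(0)$-case, where the analogous result is already contained in Lemma~\ref{lemA0}: for any $\CA(0)$-module $N$,
\[ h_0^{-1}\Ext_{\CA(0)}^{\,\bullet,\,\bullet}(N, \FF_2) \cong \FF_2[h_0^{\pm 1}] \otimes H_\bullet(N_*; Q_0). \]
(The proof of Lemma~\ref{lemA0} uses only the $\CA(0)$-structure, so this version is immediate.) Showing that the $h_0$-towers in $\Ext_\CA^{\bullet,\bullet}(N,\FF_2)$ biject with a basis of $H_\bullet(N;Q_0) \cong H_\bullet(N_*;Q_0)$ is equivalent to showing $h_0^{-1}\Ext_\CA^{\,\bullet,\,\bullet}(N, \FF_2) \cong \FF_2[h_0^{\pm 1}] \otimes H_\bullet(N_*; Q_0)$.

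First I would compare $\Ext_\CA$ with $\Ext_{\CA(0)}$ via the Cartan--Eilenberg spectral sequence attached to the Hopf algebra extension $\CA(0)\hookrightarrow \CA \twoheadrightarrow \CA \mm \CA(0)$,
\[ E_2^{\,p,q,t} = \Ext_{\CA \mm \CA(0)}^{\,p,\,t}\!\bigl(\FF_2,\, \Ext_{\CA(0)}^{\,q,\,*}(N, \FF_2)\bigr) \Longrightarrow \Ext_\CA^{\,p+q,\,t}(N, \FF_2). \]
The class $h_0 \in \Ext_\CA^{1,1}(\FF_2,\FF_2)$ restricts to the generator of $\Ext_{\CA(0)}^{1,1}(\FF_2,\FF_2)$, so it is detected on $E_2^{0,1,1}$; consequently $h_0$-inversion commutes with passage to $E_2$, and
\[ h_0^{-1}E_2^{\,p,q,t} \cong \Ext_{\CA\mm\CA(0)}^{\,p}\!\bigl(\FF_2,\, H_\bullet(N_*;Q_0)\bigr) \otimes \FF_2[h_0^{\pm 1}], \]
with trivial $\CA\mm\CA(0)$-action on $H_\bullet(N_*;Q_0)$.

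The main step, and the one I expect to be the principal obstacle, is showing that after $h_0$-inversion only the column $p = 0$ survives to $E_\infty$ with no hidden extensions, since this column is exactly $H_\bullet(N_*;Q_0)\otimes\FF_2[h_0^{\pm 1}]$. The argument I would try combines an Adams-type vanishing line for $\Ext_{\CA\mm\CA(0)}$, which follows from the fact that the dual Hopf algebra $(\CA\mm\CA(0))^{*} = \FF_2[\xi_1^2,\xi_2,\xi_3,\ldots]$ has all generators in positive degree (cf.\ \cite{AdamsPT}), with the observation that $h_0$-periodicity forces surviving classes to lie on vertical lines in Adams bidegree. These two constraints are incompatible for $p > 0$, so differentials and/or $h_0$-torsion must annihilate every positive-filtration contribution, leaving only the $p=0$ column. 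The delicate points are the behavior of the vanishing line when the coefficient module $H_\bullet(N_*;Q_0)$ is only bounded below (hence unbounded above after summing $h_0$-powers) and the exclusion of hidden $h_0$-extensions from $p > 0$ classes into the $p = 0$ column; careful tracking of the tridegree $(p,q,t)$ and invoking convergence in each fixed total degree should settle both, at which point the $h_0$-towers in $\Ext_\CA^{\bullet,\bullet}(N,\FF_2)$ are exactly indexed by an $\FF_2$-basis of $H_\bullet(N;Q_0)$.
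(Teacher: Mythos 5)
The paper does not prove Theorem~\ref{thm:Davis}; it attributes the result to Davis~\cite{Davis} and cites it as known. So there is no internal proof to compare against, and your proposal must stand or fall on its own merits.

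Your reduction to the $\CA(0)$-computation via Lemma~\ref{lemA0} and the setup of the Cartan--Eilenberg spectral sequence for $\CA(0)\hookrightarrow\CA\twoheadrightarrow\CA\mm\CA(0)$ are reasonable, and the identification of $h_0^{-1}E_2^{p,q,t}$ with $\Ext_{\CA\mm\CA(0)}^{p,\bullet}(\FF_2, H_\bullet(N_*;Q_0))\otimes\FF_2[h_0^{\pm1}]$ follows from Lemma~\ref{lemA0} once one grants that inverting $h_0$ commutes with forming the $E_2$-page. But the step you yourself flag as the principal obstacle --- that only the $p=0$ column survives --- is where the argument actually breaks, and the heuristic you offer does not repair it. Multiplication by $h_0$ moves a class from tridegree $(p,q,t)$ to $(p,q+1,t+1)$: the CESS filtration $p$ is unchanged, and the internal degree $t$ grows in lockstep with $q$, so the vanishing line for $\Ext_{\CA\mm\CA(0)}^{p,t}$ (which bounds $p$ in terms of $t$) is never approached along an $h_0$-tower. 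There is no tension between ``fixed $p$'' and ``$h_0$-periodic,'' so the claim that the two constraints are incompatible for $p>0$ is simply false as stated. What actually has to happen is that every class in positive CESS filtration either supports or receives a differential, or else becomes $h_0$-torsion via a hidden extension across filtration jumps (as $h_0 h_1=0$ does for $h_1\in\Ext_\CA^{1,2}(\FF_2,\FF_2)$, which sits in $p=1$). Proving this is the entire content of the theorem and it is not addressed.

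A second, independent gap is the convergence of the localized spectral sequence. You invoke ``convergence in each fixed total degree,'' but the CESS filtration is unbounded in the direction the differentials move, and inverting $h_0$ produces infinitely many classes in any fixed bidegree $(s,t)$; neither strong convergence of the localized spectral sequence nor the absence of an $\varprojlim^1$ term is automatic here, and both would need to be justified before the $E_\infty$-identification means anything about $h_0^{-1}\Ext_\CA$. As it stands, the proposal is a plausible outline with the two load-bearing steps missing.
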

Davis' proof of this theorem effectively shows
\[ h_0^{- 1}\Ext_{\CA}^{\bullet, \bullet}(N, \FF_2) \cong \FF_2[h_0^{\pm 1}] \otimes H_\bullet (N; Q_0). \]
For any $\CA(1)$-module, $M$, there is an $\CA$-module $\CA \otimes_{\CA(1)} M$ where $\CA$ acts on the left factor. So,
\begin{align*} h_0^{-1}\Ext_{\CA(1)}^{\bullet, \bullet}\left(M, \FF_2 \right) & \cong h_0^{-1}\Ext_{\CA}^{\bullet, \bullet}\left(\CA \otimes_{\CA(1)} M, \FF_2\right) \cong \FF_2[h_0^{\pm 1}] \otimes H_\bullet (\CA \otimes_{\CA(1)} M;Q_0). \end{align*}
If $M$ can be lifted to an $\CA$-module, then there is a shearing isomorphism,
\[ \CA \otimes_{\CA(1)} M \cong \CA\mm\CA(1) \otimes M  \]
where $\CA\mm\CA(1) \otimes M$ has the diagonal action. In this case, 
\begin{align*} h_0^{-1}\Ext_{\CA(1)}^{\bullet, \bullet}\left(M, \FF_2\right) &\cong \FF_2[h_0^{\pm 1}] \otimes H_\bullet \left(\CA\mm\CA(1);Q_0\right) \otimes H_\bullet \left( M ; Q_0\right) \\
&\cong \FF_2[h_0^{\pm 1}] \otimes \FF_2[a] \otimes H_\bullet (M;Q_0) \end{align*}
where the degree of $a$ is $t-s=4$. Hence, the spectral sequence,
\[ \FF_2[h_0^{\pm 1}, x_3^2] \otimes H_\bullet (M;Q_0)   \Rightarrow h_0^{-1}\Ext_{\CA(1)}^{\bullet, \bullet}(M, \FF_2) \]
collapses at the $E_1$-page.  
This collapse depends on $M$ having a compatible $\CA$-module structure.   So, any nonzero differentials, $d_n$ for $n \geq 1$, in the $h_0$-local Davis--Mahowald spectral sequence associated to an $\CA(1)$-module indicate that the module cannot be lifted to an $\CA$-module.  The formula for $d_2$ then implies the following corollary.
\begin{cor}\label{cord2lift}
If $M$, a bounded below $\CA(1)$-module of finite type, has elements $x$ and $y$ such that $[x_*], [y_*] \in H_\bullet (M_*;Q_0)$ are nonzero and $y_* Sq^2 Sq^1 Sq^2  = x_*$ then $M$ does not lift to an $\CA$-module.
\end{cor}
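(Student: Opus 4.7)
The plan is to derive a contradiction by showing that, under the given hypothesis, the $d_2$ differential in the $h_0$-localized Davis--Mahowald spectral sequence for $M$ is nonzero, which is incompatible with $M$ lifting to an $\CA$-module.

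First I would recall the setup preceding the corollary: if $M$ admits an $\CA$-module structure compatible with its $\CA(1)$-action, then combining Davis' theorem (Theorem~\ref{thm:Davis}) with the shearing isomorphism $\CA\otimes_{\CA(1)} M \cong \CA\mm\CA(1)\otimes M$ gives
\[
h_0^{-1}\Ext_{\CA(1)}^{\bullet,\bullet}(M,\FF_2) \cong \FF_2[h_0^{\pm 1}, a]\otimes H_\bullet(M;Q_0),
\]
which has the same total dimension in each tridegree as the $E_1$-page $\FF_2[h_0^{\pm 1}, x_3^2]\otimes H_\bullet(M_*;Q_0)$ identified in Theorem~\ref{thmE1}. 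Hence the $h_0$-localized Davis--Mahowald spectral sequence must collapse at $E_1$, so in particular $d_2=0$ on $E_1$.

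Next I would apply the $d_2$ formula from Theorem~\ref{thmd1} to the class $h_0\,[y_*]$ (i.e.\ taking $\sigma=0$ and $s=1$). Since $[y_*]\in H_\bullet(M_*;Q_0)$ is nonzero, this class is a nonzero element of $E_1$, and
\[
d_2\bigl(h_0\,[y_*]\bigr) = x_3^{2}\,\bigl[y_* Sq^2 Sq^1 Sq^2\bigr] = x_3^{2}\,[x_*].
\]
By hypothesis $[x_*]\neq 0$ in $H_\bullet(M_*;Q_0)$, so the right side is a nonzero element of $E_1$ as well, by the description of $E_1$ in Theorem~\ref{thmE1}. Therefore $d_2$ is nonzero on $E_1$.

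This contradicts the collapse of the spectral sequence under the assumption that $M$ lifts, so no such lift can exist. The only potential subtlety, which I would flag briefly, is that the collapse statement requires comparing dimensions of $E_1$ and $E_\infty$; but this is already established in the discussion immediately preceding the corollary, so no new work is required beyond applying the $d_2$ formula.
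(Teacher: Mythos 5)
Your proof is correct and follows exactly the route the paper intends: the paper itself gives no explicit proof beyond the one-line remark that ``the formula for $d_2$ then implies the following corollary,'' and your argument is precisely the unpacking of that remark, applying Theorem~\ref{thmd1} to $h_0[y_*]$ to produce a nonzero $d_2$ and contradicting the collapse established in the preceding discussion.
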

Conjecture~\ref{conjhigherd} would imply the following.
\begin{conj}
Suppose $M$, a bounded below $\CA(1)$-module of finite type, has elements \[x_0, x_1, \ldots, x_{n-1}\] such that $[(x_i)_*] \in H_\bullet(M_*;Q_0)$ is nonzero for all $i$,  $[(x_{n-1})_* Sq^2 Sq^1 Sq^2 ] \in H_\bullet(M_*;Q_0)$ is nonzero, and $(x_i)_* Sq^2 Sq^1 Sq^2  = (x_{i+1})_* Sq^1 $ for all $i<n-1$.  Then $M$ does not lift to an $\CA$-module.  
\end{conj}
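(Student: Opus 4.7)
The plan is to deduce this from Conjecture~\ref{conjhigherd} together with Davis' theorem, exactly as Corollary~\ref{cord2lift} follows from Theorem~\ref{thmd1}. The hypothesis is designed precisely so that the chain $(x_0)_*, (x_1)_*, \ldots, (x_{n-1})_*$ in $M_*$ mimics the algebraic structure of an $n$-seagull (with the connecting $Sq^2 Sq^1 Sq^2$--$Sq^1$ relations), and the geometry of the seagull is exactly what the higher differential conjecture is set up to detect.

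First I would recall that if $M$ lifts to an $\CA$-module, then by the shearing isomorphism $\CA \otimes_{\CA(1)} M \cong \CA\mm\CA(1) \otimes M$ combined with Davis' theorem (Theorem~\ref{thm:Davis}), one has
\[ h_0^{-1}\Ext_{\CA(1)}^{\bullet, \bullet}(M, \FF_2) \cong \FF_2[h_0^{\pm 1}] \otimes \FF_2[a] \otimes H_\bullet(M; Q_0), \]
whose total dimension in each tridegree agrees with the $E_1$-page $\FF_2[h_0^{\pm 1}, x_3^2] \otimes H_\bullet(M_*; Q_0)$ of the $h_0$-localized Davis--Mahowald spectral sequence. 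Hence the spectral sequence collapses at $E_1$ and every differential $d_r$ with $r \geq 2$ vanishes.

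Next I would apply Conjecture~\ref{conjhigherd} directly to the given data. Setting $x_* = (x_0)_*$ and $y_* = (x_{n-1})_* Sq^2 Sq^1 Sq^2$, the chain $(x_1)_*, \ldots, (x_{n-1})_*$ satisfies precisely the witness conditions of that conjecture, so there is a nonzero differential
\[ d_{2n}\!\left( h_0^{s-\sigma} x_3^{\sigma} [(x_0)_*] \right) \;=\; h_0^{s-\sigma+1-2n} x_3^{\sigma+2n} \left[ (x_{n-1})_* Sq^2 Sq^1 Sq^2 \right]. \]
Both the source class (by the assumption that $[(x_0)_*]$ is nonzero in $H_\bullet(M_*;Q_0)$) and the target class (by the assumption that $[(x_{n-1})_* Sq^2 Sq^1 Sq^2]$ is nonzero there) are nonzero on $E_1$, and choosing $s - \sigma$ sufficiently large ensures that no earlier differential can have touched either of them, since all nonzero differentials live in the even columns and must come from strictly lower filtration than the target. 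Hence this $d_{2n}$ survives to be an actual nonzero differential on $E_{2n}$, contradicting the collapse forced by lifting, so $M$ cannot lift to an $\CA$-module.

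The main obstacle is plainly not the reduction above, but the unresolved Conjecture~\ref{conjhigherd} itself. As noted just before Example~\ref{exmponesg}, the proof of the $d_2$ formula worked because every map that arose factored through $\FF_2$, so Lemma~\ref{lemF2map} could be applied at the single connecting-homomorphism step; for $d_{2n}$ one must iterate the construction through the exact couple $2n$ times, and intermediate representatives need not factor through $\FF_2$. The natural strategy is induction on $n$: given the chain $(x_i)_*$, one constructs explicit lifts of $\varphi_{(x_0)_*}^{\sigma} \cdot t_{s-\sigma}$ through the exact couple whose $k$-th stage is controlled by $(x_k)_*$ via the relation $(x_{k-1})_* Sq^2 Sq^1 Sq^2 = (x_k)_* Sq^1$, using the explicit description of the resolution $B_\bullet$ of Figure~\ref{figseagulltensorN}; after $n$ steps the defining relation of the seagull chain produces $\varphi_{((x_{n-1})_* Sq^2 Sq^1 Sq^2)_*}^{\sigma+2n} \cdot t_{s-\sigma+1-2n}$. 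Making this Massey-product-style computation rigorous, and checking that no earlier $d_{2k}$ ($k < n$) obstructs the lift, is the technical heart of what remains.
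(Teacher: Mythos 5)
The statement is labeled a conjecture in the paper, and the paper offers no proof---only the one-line remark that Conjecture~\ref{conjhigherd} would imply it. Your reduction is exactly the one intended: set $x_* = (x_0)_*$ and $y_* = (x_{n-1})_* Sq^2 Sq^1 Sq^2$, take $(x_1)_*, \ldots, (x_{n-1})_*$ as the witness chain, invoke Conjecture~\ref{conjhigherd} to obtain a nonzero $d_{2n}$, and contrast this with the collapse at $E_1$ forced on the $h_0$-localized Davis--Mahowald spectral sequence by the shearing isomorphism and Theorem~\ref{thm:Davis} when $M$ lifts. Your closing assessment---that the genuine difficulty is Conjecture~\ref{conjhigherd} itself, and that iterating the $d_2$ argument is obstructed because intermediate lifts through the exact couple need not factor through $\FF_2$, so Lemma~\ref{lemF2map} cannot simply be reapplied---also matches exactly what the paper notes just before Example~\ref{exmponesg}.

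One local step is incorrect as written. You claim that ``choosing $s - \sigma$ sufficiently large ensures that no earlier differential can have touched'' the source or target. After inverting $h_0$ the spectral sequence is a module over $\FF_2[h_0^{\pm 1}]$ and the differentials are $h_0$-equivariant, so every position along an $h_0$-tower at fixed $(\sigma, t-s)$ behaves identically; increasing $s - \sigma$ buys you nothing. What actually controls the lower differentials is the chain hypothesis itself: for $i < n-1$ the element $(x_i)_* Sq^2 Sq^1 Sq^2 = (x_{i+1})_* Sq^1$ is a $Q_0$-boundary, so $[(x_i)_* Sq^2 Sq^1 Sq^2] = 0$ in $H_\bullet(M_*;Q_0)$ and Theorem~\ref{thmd1} gives a vanishing $d_2$ out of each $[(x_i)_*]$; Conjecture~\ref{conjhigherd}, read as an ``iff,'' would similarly control $d_{2k}$ for $1 < k < n$. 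But since you are already invoking Conjecture~\ref{conjhigherd} as a black box and its statement asserts a well-defined nonzero $d_{2n}$ on $E_{2n}$, the survival argument is superfluous as well as wrong---delete that sentence and the reduction stands, conditional on the unproved Conjecture~\ref{conjhigherd}, which is precisely the state of affairs the paper records.
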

While this is only a conjecture in the general case, in some particular cases, we can compute higher differentials and thus detect obstructions to lifting.
\begin{cor}\label{cornolift}
For any finite $n$, $\Upsilon_n$ is not an $\CA$-module. 
\end{cor}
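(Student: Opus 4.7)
The plan is to argue by contradiction using the $h_0$-localized Davis--Mahowald spectral sequence and the collapse criterion established in the discussion preceding the corollary. Suppose, for contradiction, that $\Upsilon_n$ admits a compatible $\CA$-module structure for some finite $n$. Then, as explained above, the shearing isomorphism $\CA \otimes_{\CA(1)} \Upsilon_n \cong \CA\mm\CA(1) \otimes \Upsilon_n$ (using the diagonal action on the right-hand side) together with Davis' theorem (Theorem~\ref{thm:Davis}) would force
\[ h_0^{-1}\Ext_{\CA(1)}^{\bullet,\bullet}(\Upsilon_n, \FF_2) \cong \FF_2[h_0^{\pm 1}, x_3^2] \otimes H_\bullet(\Upsilon_n; Q_0) \]
as a trigraded $\FF_2$-vector space, matching the $E_1$-page of the $h_0$-localized Davis--Mahowald spectral sequence for $\Upsilon_n$. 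In particular, the spectral sequence would collapse at $E_1$.

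However, Proposition~\ref{propSpSqSeagull} identifies precisely one nonzero differential, namely $d_{2n}$, in the $h_0$-localized Davis--Mahowald spectral sequence converging to $h_0^{-1}\Ext_{\CA(1)}^{\bullet, \bullet}(\Upsilon_n, \FF_2)$. Since $n$ is finite, this differential is nonzero, so the spectral sequence cannot collapse at the $E_1$-page. This contradicts the previous paragraph, so no compatible $\CA$-module structure on $\Upsilon_n$ exists.

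The main work has already been done in Proposition~\ref{propSpSqSeagull}; here we simply invoke the general principle that differentials in the $h_0$-localized Davis--Mahowald spectral sequence obstruct lifting an $\CA(1)$-module to an $\CA$-module. No new computation is required.
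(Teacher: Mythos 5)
Your proof is correct and takes essentially the same approach as the paper: the paper's (very brief) justification is exactly that Proposition~\ref{propSpSqSeagull} produces a nonzero differential $d_{2n}$, while the discussion preceding the corollary shows that a compatible $\CA$-module structure forces the $h_0$-localized Davis--Mahowald spectral sequence to collapse at $E_1$. You have simply written out this contradiction argument in full.
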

This corollary follows very smoothly from Proposition~\ref{propSpSqSeagull}, though this is certainly not the only way to prove it.  In fact, one can show directly from the Adem relations that $\Upsilon_n$ is an $\CA(m)$- but not an $\CA(m+1)$-module where $m$ is the largest number such that $2^{m-1}$ divides $n$.
\begin{cor}\label{corliftingcrit}
Let $M$ be a bounded below $\CA(1)$-module of finite type.
\begin{enumerate}[label=(\roman*)]
    \item If $L_0 M$ is stably equivalent to a flock of seagulls including a finite seagull, then $M$ does not lift to an $\CA$-module.
    \item If $M$ is $Q_0$-local, then $M$ lifts to an $\CA$-module if and only if $M$ is stably equivalent to a flock of infinite seagulls (or zero). 
\end{enumerate}\end{cor}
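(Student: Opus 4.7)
My plan is to prove both parts via Poincar\'e series comparisons enabled by the $h_0$-localized Davis--Mahowald spectral sequence together with Davis' theorem.

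For part (i), suppose $L_0 M \simeq \bigoplus_i \Sigma^{\alpha_i}\Upsilon_{n_i}$ stably. Lemma~\ref{lemEinf} and Proposition~\ref{propSpSqSeagull} give
\[
 h_0^{-1}\Ext_{\CA(1)}^{\bullet,\bullet}(M, \FF_2) \;\cong\; \bigoplus_i \Sigma^{\alpha_i}\,\FF_2[h_0^{\pm 1}, x_3^2]/x_3^{2n_i},
\]
whose Poincar\'e series over $\FF_2[h_0^{\pm 1}]$ in the grading $t = t-s$ (with $t^{4\cdot\infty}=0$) is
\[
 A(t) = \sum_i \frac{t^{\alpha_i}(1-t^{4n_i})}{1-t^4}.
\]
If $M$ admitted a compatible $\CA$-module structure, then the shearing isomorphism and Davis' theorem recalled just before the corollary would give
\[
 h_0^{-1}\Ext_{\CA(1)}^{\bullet,\bullet}(M, \FF_2) \cong \FF_2[h_0^{\pm 1}, a] \otimes H_\bullet(M;Q_0),
\]
with Poincar\'e series
\[
 B(t) = \frac{1}{1-t^4}\Bigl(\sum_i t^{\alpha_i} + \sum_{n_i<\infty}t^{\alpha_i + 4n_i + 1}\Bigr),
\]
since $H_\bullet(M;Q_0) \cong H_\bullet(L_0 M;Q_0)$ contributes one Margolis class at $t^{\alpha_i}$ per infinite seagull and two, at $t^{\alpha_i}$ and $t^{\alpha_i + 4n_i + 1}$, per finite seagull. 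A direct calculation yields
\[
 A(t) - B(t) = -\sum_{n_i<\infty}\frac{t^{\alpha_i + 4n_i}(1+t)}{1-t^4},
\]
a nonzero power series whenever any $n_i$ is finite, contradicting the identification and proving (i).

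The forward direction of (ii) is immediate: $Q_0$-locality gives $L_0 M \simeq M$ stably, Theorem~\ref{thmclassification} presents $M$ as a flock of seagulls, and part (i) rules out any finite seagull. For the reverse direction, assume $M$ is stably equivalent to $\bigoplus_i \Sigma^{\alpha_i}\Upsilon_\infty$, so $M \cong F \oplus \bigoplus_i \Sigma^{\alpha_i}\Upsilon_\infty$ with $F$ free. Direct sums and suspensions preserve liftability, and free $\CA(1)$-modules lift trivially, so it suffices to produce an $\CA$-module structure on $\Upsilon_\infty$. For this one identifies $\Upsilon_\infty$ as the reduced part of the $\CA$-module $\CA\mm\CA(0) = H^*(H\ZZ; \FF_2)$: its restriction to $\CA(1)$ is bounded below, of finite type, $Q_0$-local (since $H\ZZ$ is $Q_n$-acyclic for $n \geq 1$), and has $H_\bullet(-;Q_0) \cong \FF_2$ concentrated in degree zero, so by Theorem~\ref{thmclassification} it is stably equivalent to $\Upsilon_\infty$.

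The hardest step is passing from this stable equivalence to a strict $\CA$-module structure on $\Upsilon_\infty$ itself, since the $\CA(1)$-splitting of $\CA\mm\CA(0)$ into its reduced part and a free $\CA(1)$-summand need not be $\CA$-linear. This is handled by absorbing the free $\CA(1)$-summand into the trivially-lifting free summand of $M$, so that the resulting $\CA$-module carries the desired underlying $\CA(1)$-structure.
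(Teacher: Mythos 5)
Your argument for part (i) is correct and is essentially a repackaging of the paper's: the paper observes directly that a finite seagull summand of $L_0 M$ forces a nonzero differential in the $h_0$-localized Davis--Mahowald spectral sequence (via Proposition~\ref{propSpSqSeagull} and Lemma~\ref{lemEinf}), while the collapse forced by a lift is impossible; you reach the same conclusion by comparing Poincar\'e series of the abutment and the $E_1$-page. Both proofs use the identical ingredients --- Lemma~\ref{lemEinf}, the per-seagull computation, and Davis' theorem --- so I'd call this the same approach phrased numerically. (Note your normalization $\FF_2[h_0^{\pm1},x_3^2]/x_3^{2n_i}$, giving $n_i$ towers per finite $n_i$-seagull, is the one consistent with the $E_\infty$-description in Proposition~\ref{propSpSqSeagull}.) The forward direction of part (ii) is also fine: $Q_0$-locality gives $L_0 M \simeq M$, the classification theorem applies, and (i) excludes finite seagulls.

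The reverse direction of part (ii), however, has a genuine gap, and you have actually put your finger on it without resolving it. You reduce to showing that $\Upsilon_\infty$ itself carries an $\CA$-module structure, correctly observe that knowing $\Upsilon_\infty$ is an $\CA(1)$-module direct summand of the $\CA$-module $\CA\mm\CA(0)$ does \emph{not} give this (the splitting is not $\CA$-linear), and then propose to ``absorb the free $\CA(1)$-summand into the trivially-lifting free summand of $M$.'' This absorption only works when the free summand $F$ of $M$ is large enough to contain, for each $i$, a copy of the free $\CA(1)$-complement of $\Upsilon_\infty$ inside $\Sigma^{\alpha_i}\CA\mm\CA(0)$. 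But nothing in the hypothesis guarantees this: if $M$ is reduced (e.g.\ $M = \Upsilon_\infty$ on the nose, which does satisfy the hypotheses), then $F = 0$ and there is nothing to absorb into. So the argument as written does not establish the claim, and in particular it does not show $\Upsilon_\infty$ is an $\CA$-module, which you yourself flagged as the crux. To close the gap you would need one of: a direct verification that the $\CA(1)$-action on $\Upsilon_\infty$ extends to an $\CA$-action (e.g.\ by defining $Sq^4$ on the seagull generators and checking Adem relations degree by degree), or an argument that the $\CA(1)$-splitting $\CA\mm\CA(0) \cong \Upsilon_\infty \oplus (\text{free})$ can be chosen $\CA$-equivariantly, or a reinterpretation of ``lifts'' up to stable equivalence. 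As it stands, the proposal names the hard step accurately but the absorbing maneuver does not carry it.
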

\begin{proof}
As shown in Lemma~\ref{lemEinf}, $h_0^{-1}\Ext^{s,t}_{\CA(1)}(M, \FF_2)\cong h_0^{-1}\Ext^{s,t}_{\CA(1)}(L_0 M, \FF_2)$. Furthermore, $L_0 M \to M$ induces an isomorphism in $Q_0$-homology.  
So, there is at least one nonzero differential in the spectral sequence
\[\FF_2[h_0^{\pm 1}, x_3^2] \otimes H_\bullet(M;Q_0)  \Rightarrow h_0^{-1}\Ext_{\CA(1)}^{s,t}(M, \FF_2)\]
if and only if there is at least one nonzero differential in the spectral sequence
\[\FF_2[h_0^{\pm 1}, x_3^2
] \otimes H_\bullet(L_0 M;Q_0) \Rightarrow h_0^{-1}\Ext_{\CA(1)}^{s,t}(L_0 M, \FF_2).\]
Then, $(i)$ follows from the fact that if $L_0 M$ is stably equivalent to a flock of seagulls including a finite seagull, the spectral sequence associated to $L_0 M$ has a nonzero differential.

Part $(ii)$ follows quickly from the classification theorem and Corollary~\ref{cornolift}.
\end{proof}

When $M$ is finite, part $(ii)$ follows directly from the classification theorem and a result of Palmieri \cite[A.1]{PalmieriNil2} that shows any finite, $Q_0$-local $\CA$-module has zero $Q_0$-homology. 
%\begin{thm}[Palmieri]
%If $M$ is a finite dimensional module over $\CA$ and $H_\bullet(M; Q_i)=0$, then $H_\bullet(M;Q_{i-1})=0$.
%\end{thm}
%Any bounded below $\CA(1)$-module with zero $Q_0$- and $Q_1$-Margolis homology is free (i.e., stably equivalent to zero).  

%%%%%%%%%   then the Bibliography, if any   %%%%%%%%%
\bibliographystyle{plain}	% or "siam", or "alpha", etc.
%\nocite{*}		% list all refs in database, cited or not
\bibliography{refs}		% Bib database in "refs.bib"

%%%%%%%%%   then the Appendices, if any   %%%%%%%%%
%\appendix

\end{document}